\newtheorem{theorem}{Theorem}[section]
\newtheorem{lemma}[theorem]{Lemma}
\newtheorem{proposition}[theorem]{Proposition}
\newtheorem{corollary}[theorem]{Corollary}
\newtheorem{example}[theorem]{Example}
\newtheorem{question}[theorem]{Question}
\begin{document}

\title{Almost clean rings and arithmetical rings}

\author{Fran\c{c}ois Couchot}
\address{Laboratoire de Math\'ematiques Nicolas Oresme, CNRS UMR
  6139,
D\'epartement de math\'ematiques et m\'ecanique,
14032 Caen cedex, France}
\email{couchot@math.unicaen.fr} 



\keywords{totally disconnected space, B\'ezout ring, Hermite ring, elementary divisor ring, IF-ring, valuation ring, clean ring, almost clean ring}

\subjclass[2000]{Primary 13F05, 13F10}


\begin{abstract}
It is shown that a commutative B\'{e}zout ring $R$ with compact minimal prime spectrum is an elementary divisor ring if and only if so is $R/L$ for each minimal prime ideal $L$. This result is obtained by using the quotient space $\mathrm{pSpec}\ R$ of the prime spectrum of the ring $R$ modulo the equivalence generated by the inclusion. When every prime ideal contains only one minimal prime, for instance if $R$ is arithmetical, $\mathrm{pSpec}\ R$ is Hausdorff and there is a bijection between this quotient space and the minimal prime spectrum $\mathrm{Min}\ R$, which is a homeomorphism if and only if $\mathrm{Min}\ R$ is compact. If $x$ is a closed point of $\mathrm{pSpec}\ R$, there is a pure ideal $A(x)$ such that $x=V(A(x))$. If $R$ is almost clean, i.e. each element is the sum of a regular element with an idempotent, it is  shown that $\mathrm{pSpec}\ R$ is totally disconnected and, $\forall x\in\mathrm{pSpec}\ R$, $R/A(x)$ is almost clean; the converse holds if every principal ideal is finitely presented. Some questions posed by Facchini and Faith at the second International Fez Conference on Commutative Ring Theory in 1995, are also investigated. If $R$ is a commutative ring for which the ring $Q(R/A)$ of quotients of $R/A$ is an IF-ring for each proper ideal $A$, it is proved that $R_P$ is a strongly discrete valuation ring for each maximal ideal $P$ and $R/A$ is semicoherent for each proper ideal $A$.
\end{abstract}

\maketitle


\section{Introduction}
In this paper we consider the following two  questions:
\begin{question}
\label{Q:edr1}
 Is every B\'{e}zout domain an elementary divisor ring?
\end{question}

\begin{question}
\label{Q:edr2}
More generally, is every B\'{e}zout semihere\-ditary ring an
  elementary divisor ring?
\end{question}
The first question was posed by M. Henriksen in \cite{Hen55} in 1955, and
the second by M.D. Larsen, W.J Lewis and T.S. Shores in \cite{LLS74} in
1974.

In Section~\ref{S:arithmetic} we prove that these two questions are equivalent but they are still unsolved.

To show this equivalence, we use the quotient space $\mathrm{pSpec}\ R$ of $\mathrm{Spec}\ R$ modulo the equivalence generated by the inclusion, where $R$ is a commutative ring. When $R$ is a {\it Gelfand ring}, i.e. each prime ideal is contained in only one maximal,  $\mathrm{pSpec}\ R$ is Hausdorff and homeomorphic to $\mathrm{Max}\ R$ (Proposition~\ref{P:Gel}). On the  other hand, if each prime ideal contains a unique minimal prime, then $\mathrm{pSpec}\ R$ is Hausdorff and there is a continuous bijection from $\mathrm{Min}\ R$ into $\mathrm{pSpec}\ R$  which is a homeomorphism if and only if  $\mathrm{Min}\ R$ is compact. There is also a continuous surjection $\tau_R:\mathrm{pSpec}\ R\rightarrow \mathrm{Spec}\ \mathrm{B}(R)$, where $\mathrm{B}(R)$ is the Boolean ring associated to $R$, and $\tau_R$ is a homeomorphism if and only if $\mathrm{pSpec}\ R$ is totally disconnected. In this case, it is possible to get some interesting algebraic results by using Lemma~\ref{L:poly}.

A  ring $R$ is said to be \textit{clean} (respectively \textit{almost clean} (see \cite{McG03}))  if each element of $R$ is the sum of an idempotent with a unit (respectively a regular element). In Section~\ref{S:clean} we show that the total disconnectedness of $\mathrm{pSpec}\ R$ is necessary if the ring $R$ is almost clean. Recall that a ring $R$ is clean if and only if $R$ is Gelfand and $\mathrm{Max}\ R$ totally disconnected: see \cite[Theorem 1.7]{McG06}, \cite[Corollary 2.7]{LuYu06} or \cite[Theorem I.1]{Cou07}.
Almost clean rings were introduced by McGovern in 
\cite{McG03} and studied by several authors: Ahn and Anderson \cite{AhAn06}, Burgess and Raphael \cite{BuRa08} and \cite{BuRap08},  Varadarajan \cite{Var07}. If $Q$ is the quotient ring of $R$ and if each prime ideal of $R$ contains a unique minimal prime, we show that $\mathrm{pSpec}\ R$ and $\mathrm{pSpec}\ Q$ are homeomorphic and $\mathrm{B}(R)=\mathrm{B}(Q)$; moreover, if $R$ is arithmetical, then $R$ is almost clean if $Q$ is clean, and the converse holds if $Q$ is coherent.

In Section~\ref{S:IF} we  give partial answers to some questions posed by Facchini and Faith at the second International Fez Conference on Commutative Ring Theory in 1995 \cite{FaFa97}. If $R$ is fractionally IF, it is shown that $R/A$ is semicoherent for each ideal $A$ and $R_P$ is a strongly discrete valuation ring for each maximal ideal $P$. We give an example of a finitely fractionally self FP-injective ring which is not arithmetical; recall that Facchini and Faith proved that each fractionally self FP-injective ring is arithmetical. It is also proven that any ring which is either clean, coherent and arithmetical or semihereditary is finitely fractionally IF. However, there exist examples of clean coherent arithmetical rings with a non-compact minimal prime spectrum; recall that the author proved that $\mathrm{Min}\ R/A$ is compact for any ideal $A$ of a fractionally self FP-injective ring $R$.

All rings in this paper are associative and commutative with unity, and all modules are
unital.We denote respectively $\mathrm{Spec}\ R$, $\mathrm{Max}\ R$ and $\mathrm{Min}\ R,$ the
space of prime ideals, maximal ideals and minimal prime ideals of
$R$, with the Zariski topology. If $A$ a subset of $R$, then we denote  
$V(A) = \{ P\in\mathrm{Spec}\ R\mid A\subseteq P\}\ \mathrm{and}\ \ D(A) =\mathrm{Spec}\ R\setminus V(A).$

\section{A quotient space of the prime spectrum of a ring}
\label{S:pSpec}
If $R$ is a ring, we consider on $\mathrm{Spec}\ R$ the equivalence relation $\mathcal{R}$ defined by   $L\mathcal{R} L'$ if there exists a finite sequence of prime ideals $(L_k)_{1\leq k\leq n}$ such that $L=L_1,$ $L'=L_n$ and $\forall k,\ 1\leq k\leq (n-1),$ either $L_k\subseteq L_{k+1}$ or $L_k\supseteq L_{k+1}$. We denote by $\mathrm{pSpec}\ R$ the quotient space of $\mathrm{Spec}\ R$ modulo $\mathcal{R}$ and by $\lambda_R: \mathrm{Spec}\ R\rightarrow\mathrm{pSpec}\ R$ the natural map. The quasi-compactness of $\mathrm{Spec}\ R$ implies the one of $\mathrm{pSpec}\ R$, but generally $\mathrm{pSpec}\ R$  is not  $T_1$: see \cite[Propositions 6.2 and 6.3]{Laz67}. However:

\begin{proposition}
\label{P:Gel} The following conditions are equivalent for a ring $R$:
\begin{enumerate}
\item the restriction of $\lambda_R$ to $\mathrm{Max}\ R$ is a homeomorphism;
\item the restriction of $\lambda_R$ to $\mathrm{Max}\ R$ is injective;
\item $R$ is Gelfand.
\end{enumerate} 
In this case $\mathrm{pSpec}\ R$ is Hausdorff.
\end{proposition}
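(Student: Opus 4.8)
The plan is to run the cycle $(1)\Rightarrow(2)\Rightarrow(3)\Rightarrow(1)$, after first recording that the restriction of $\lambda_R$ to $\mathrm{Max}\ R$ is automatically surjective onto $\mathrm{pSpec}\ R$: every prime ideal sits inside a maximal one, and comparable primes are $\mathcal R$-equivalent, so each $\mathcal R$-class already meets $\mathrm{Max}\ R$. Given this, $(1)\Rightarrow(2)$ is immediate. For $(2)\Rightarrow(3)$ I would argue by contraposition: if a prime $L$ lies in two distinct maximal ideals $M_1,M_2$, then $M_1\supseteq L\subseteq M_2$ forces $M_1\mathcal R M_2$, hence $\lambda_R(M_1)=\lambda_R(M_2)$, contradicting injectivity on $\mathrm{Max}\ R$; so every prime lies in a unique maximal, i.e.\ $R$ is Gelfand.

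The real content is $(3)\Rightarrow(1)$. Assume $R$ Gelfand and let $\mu\colon\mathrm{Spec}\ R\to\mathrm{Max}\ R$ take a prime to the unique maximal ideal above it. The key observation is that $\mu$ is constant on each $\mathcal R$-class: if $L\subseteq L'$ then $\mu(L')\supseteq L'\supseteq L$, so $\mu(L')=\mu(L)$ by uniqueness, and this propagates along any finite comparability chain. Consequently the $\mathcal R$-class of a maximal ideal contains no other maximal ideal, so $\lambda_R|_{\mathrm{Max}\ R}$ is injective, hence a continuous bijection. To make it a homeomorphism I would invoke the classical theorem of De Marco and Orsatti that for a Gelfand ring $\mu$ is continuous; since $\mu$ is constant on $\mathcal R$-classes it factors as $\mu=\bar\mu\circ\lambda_R$ with $\bar\mu\colon\mathrm{pSpec}\ R\to\mathrm{Max}\ R$, and $\bar\mu$ is continuous because $\lambda_R$ is a quotient map. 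A direct check gives $\bar\mu\circ(\lambda_R|_{\mathrm{Max}\ R})=\mathrm{id}$ and $(\lambda_R|_{\mathrm{Max}\ R})\circ\bar\mu=\mathrm{id}$ (the latter because $\lambda_R(\mu(P))=\lambda_R(P)$ as $P\subseteq\mu(P)$), so $\lambda_R|_{\mathrm{Max}\ R}$ and $\bar\mu$ are mutually inverse homeomorphisms. For the final assertion I would use that $\mathrm{Max}\ R$ is Hausdorff for a Gelfand ring: distinct maximal ideals $M_1,M_2$ admit $a_i\in M_i$ with $a_1+a_2=1$, and the Gelfand separation identity yields $c\in 1+Ra_1$, $d\in 1+Ra_2$ with $cd=0$, so $D(c)\cap\mathrm{Max}\ R$ and $D(d)\cap\mathrm{Max}\ R$ are disjoint open neighbourhoods of $M_1$ and $M_2$; transporting along the homeomorphism, $\mathrm{pSpec}\ R$ is Hausdorff.

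Everything but one step is formal: the surjectivity, the two elementary implications, the bijectivity in $(3)\Rightarrow(1)$, and the continuity of $\lambda_R|_{\mathrm{Max}\ R}$ all come out of the definitions. The hard part is continuity of the inverse $\bar\mu$ — equivalently, that $\lambda_R|_{\mathrm{Max}\ R}$ is open (or closed) — and for this I would lean on the De Marco--Orsatti continuity of $\mathrm{Spec}\ R\to\mathrm{Max}\ R$ for Gelfand rings, which itself rests on the Gelfand separation identity; if a self-contained treatment were desired, I would prove directly that each $\mu^{-1}\bigl(V(a)\cap\mathrm{Max}\ R\bigr)$ is closed in $\mathrm{Spec}\ R$ via that identity and conclude that $\lambda_R|_{\mathrm{Max}\ R}$ maps closed sets to closed sets.
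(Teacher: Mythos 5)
Your proposal is correct and follows essentially the same route as the paper: the easy implications are handled identically, and for $(3)\Rightarrow(1)$ you define the map $\mu$ sending a prime to the unique maximal ideal above it, check it is constant on $\mathcal R$-classes so that it descends to $\bar\mu$ on the quotient, identify $\bar\mu$ as the inverse of $\lambda_R|_{\mathrm{Max}\ R}$, and import continuity of $\mu$ and Hausdorffness of $\mathrm{Max}\ R$ from De Marco--Orsatti, exactly as the paper does. The only additions are minor explicit details (surjectivity of the restriction, the separation-identity sketch) that the paper leaves implicit.
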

\begin{proof}
It is obvious that $(i)\Rightarrow (ii)$.

$(ii)\Rightarrow (iii)$. If a prime ideal is contained in two maximals ideals $P_1$ and $P_2$ we get that $\lambda_R(P_1)=\lambda_R(P_2)$.

$(iii)\Rightarrow (i)$. If $L$ is a prime ideal we denote by $\mu(L)$ the unique maximal ideal containing $L$. It is easy to verify that $\mu(L)=\mu(L')$ if $L\mathcal{R}L'$. So, $\mu$ induces a map $\bar{\mu}:\mathrm{pSpec}\ R\rightarrow\mathrm{Max}\ R$. We easily show that $\bar{\mu}^{-1}=\lambda_R\vert_{\mathrm{Max}\ R}$. By \cite[Theorem 1.2]{MaOr71} $\mu$ is continuous and $\mathrm{Max}\ R$ is Hausdorff. Hence $\bar{\mu}$ is a homeomorphism and $\mathrm{pSpec}\ R$ is Hausdorff.
\end{proof}

\begin{proposition}[{\cite[Proposition IV.1]{Cou07}}]
\label{P:Hausd} Let $R$ be a ring such that each prime ideal contains only one minimal prime. Then $\mathrm{pSpec}\ R$  is  Hausdorff and $\lambda_R\vert_{\mathrm{Min}\ R}$ is bijective. Moreover $\lambda_R\vert_{\mathrm{Min}\ R}$ is a homeomorphism if and only if $\mathrm{Min}\ R$ is compact.
\end{proposition}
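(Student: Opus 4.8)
The plan is to analyze the map $\lambda_R|_{\mathrm{Min}\ R}$ in three stages: surjectivity, injectivity, and the homeomorphism criterion. Surjectivity is immediate: every prime ideal $L$ contains some minimal prime $L_0$, so $\lambda_R(L_0) = \lambda_R(L)$ since $L_0 \subseteq L$, hence the image of $\mathrm{Min}\ R$ is all of $\mathrm{pSpec}\ R$. For injectivity, suppose $L_0, L_0'$ are distinct minimal primes with $L_0 \mathcal{R} L_0'$. Then there is a finite zig-zag chain $L_0 = L_1, L_2, \dots, L_n = L_0'$ with consecutive terms comparable. First I would argue, using the hypothesis that each prime contains a unique minimal prime, that every prime $L_k$ in such a chain has the \emph{same} unique minimal prime below it: if $L_k \subseteq L_{k+1}$ then the minimal prime under $L_k$ is also under $L_{k+1}$, hence equals the unique one under $L_{k+1}$, and symmetrically; propagating along the chain forces $L_0 = L_0'$, a contradiction. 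So $\lambda_R|_{\mathrm{Min}\ R}$ is bijective.

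Next I would establish that $\mathrm{pSpec}\ R$ is Hausdorff. The key point is that $\mathrm{Min}\ R$ with the Zariski topology is always Hausdorff (this is classical), and that the quotient map $\lambda_R$ restricted to $\mathrm{Min}\ R$ should be shown to be a \emph{closed} — or at least suitably well-behaved — continuous bijection onto $\mathrm{pSpec}\ R$. Actually the cleanest route: since $\lambda_R$ is continuous and $\mathrm{Spec}\ R$ is quasi-compact, $\mathrm{pSpec}\ R$ is quasi-compact; to get Hausdorff I would take two distinct points $x = \lambda_R(L_0)$, $y = \lambda_R(L_0')$ with $L_0, L_0' \in \mathrm{Min}\ R$, and separate them. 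Using that $\mathrm{Min}\ R$ is Hausdorff one finds $a \in L_0 \setminus L_0'$ and $b \in L_0' \setminus L_0$ with, after passing to a suitable basic open cover, $ab$ nilpotent (this is the standard separation in $\mathrm{Min}\ R$); then $D(a)$ and $D(b)$ are saturated under $\mathcal{R}$ in the relevant sense and descend to disjoint open neighborhoods of $x$ and $y$ in $\mathrm{pSpec}\ R$. Here one must be careful that the images of $D(a)$, $D(b)$ under $\lambda_R$ are open, which holds because their preimages are these same saturated open sets. I expect this separation argument — verifying that the standard Min-Hausdorff neighborhoods are $\mathcal{R}$-saturated so they pass to the quotient — to be the main technical obstacle.

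For the final equivalence, the forward direction is formal: if $\lambda_R|_{\mathrm{Min}\ R}$ is a homeomorphism then $\mathrm{Min}\ R \cong \mathrm{pSpec}\ R$, and the latter is quasi-compact (being a continuous image of $\mathrm{Spec}\ R$), hence $\mathrm{Min}\ R$ is compact. Conversely, if $\mathrm{Min}\ R$ is compact, then it is a compact Hausdorff space; the map $\lambda_R|_{\mathrm{Min}\ R}$ is a continuous bijection from a compact space onto the Hausdorff space $\mathrm{pSpec}\ R$, hence automatically a homeomorphism by the standard compact-to-Hausdorff theorem. So the whole argument reduces to: (1) surjectivity via minimal primes below, (2) injectivity via uniqueness propagated along zig-zag chains, (3) $\mathrm{pSpec}\ R$ Hausdorff via $\mathcal{R}$-saturated separating opens inherited from $\mathrm{Min}\ R$, and (4) the compact-to-Hausdorff criterion. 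Since the statement is quoted from \cite{Cou07}, I would in practice simply cite that reference, but the sketch above is the proof I would reconstruct; continuity of $\lambda_R|_{\mathrm{Min}\ R}$ itself is clear since it is a restriction of the continuous $\lambda_R$.
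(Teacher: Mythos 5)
Your surjectivity step, your injectivity step (propagating the unique minimal prime along a zig-zag chain), and your compact-to-Hausdorff criterion are all correct, and the paper itself simply cites \cite[Proposition IV.1]{Cou07} rather than reproving the statement. However, the Hausdorff step --- which is the real content of the proposition --- does not work as you have sketched it. The set $D(a)$ is \emph{not} $\mathcal{R}$-saturated: it is stable under generization but not under specialization, so a minimal prime $L$ with $a\notin L$ may well lie under a maximal ideal that contains $a$. The saturation of $D(a)$ is $\{P\mid a\notin\mu(P)\}=V\bigl(\bigcup_{n\geq 1}(0:a^n)\bigr)$, where $\mu(P)$ denotes the unique minimal prime contained in $P$; this is a \emph{closed} set, not an open one, so $\lambda_R(D(a))$ has no reason to be open in $\mathrm{pSpec}\ R$. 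Moreover ``$ab$ nilpotent'' is the wrong condition to carry to the quotient: it says every minimal prime contains $a$ or $b$ (a covering condition on $\mathrm{Min}\ R$), and it does not prevent a third minimal prime from containing both $a$ and $b$, so even the correctly saturated open sets built from $a$ and $b$ need not be disjoint.

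The repair is to use the sets $D(A_a)$ with $A_a=\bigcup_{n\geq 1}(0:a^n)$: one checks that $A_a\not\subseteq P$ if and only if $a/1$ is nilpotent in $R_P$, i.e.\ $a\in\mu(P)$, so $D(A_a)=\{P\mid a\in\mu(P)\}$ is open, is saturated because $\mu$ is constant on $\mathcal{R}$-classes, and contains $V(L_0)$ whenever $a\in L_0$. To separate $V(L_0)$ from $V(L_0')$, first observe that $L_0+L_0'=R$ (a maximal ideal containing $L_0+L_0'$ would contain two distinct minimal primes), and write $a+a'=1$ with $a\in L_0$, $a'\in L_0'$. Then $D(A_a)$ and $D(A_{a'})$ are disjoint (no minimal prime can contain both $a$ and $1-a$), open, saturated, and contain $V(L_0)$ and $V(L_0')$ respectively. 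With this substitution for your separation step, the remainder of your argument goes through.
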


\begin{proposition}
\label{P:contmorp} Let $\varphi:R\rightarrow T$ be a ring homomorphism. Then $\varphi$ induces a continuous map $^b\varphi:\mathrm{pSpec}\ T\rightarrow\mathrm{pSpec}\ R$ such that $\lambda_R\circ {}^a\varphi={}^b\varphi\circ\lambda_T$, where $^a\varphi:\mathrm{Spec}\ T\rightarrow\mathrm{Spec}\ R$ is the continuous map induced by $\varphi$.
\end{proposition}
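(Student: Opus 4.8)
The plan is to show that the contraction map $^a\varphi$ passes to the quotients. Recall that $^a\varphi$ sends a prime ideal $Q$ of $T$ to $\varphi^{-1}(Q)$, and that this map is continuous for the Zariski topologies. The first step is to check that $^a\varphi$ is compatible with the equivalence relation $\mathcal{R}$, i.e. that $Q\,\mathcal{R}\,Q'$ in $\mathrm{Spec}\ T$ implies $\varphi^{-1}(Q)\,\mathcal{R}\,\varphi^{-1}(Q')$ in $\mathrm{Spec}\ R$. Since $\mathcal{R}$ is generated by the inclusion relation, it is enough to treat the case $Q\subseteq Q'$, and then $\varphi^{-1}(Q)\subseteq\varphi^{-1}(Q')$ is immediate; the general case follows by composing finitely many such steps along a chain $(L_k)$ realizing $Q\,\mathcal{R}\,Q'$.

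Consequently $\lambda_R\circ{}^a\varphi:\mathrm{Spec}\ T\to\mathrm{pSpec}\ R$ is constant on each $\mathcal{R}$-class of $\mathrm{Spec}\ T$, hence factors through the surjection $\lambda_T:\mathrm{Spec}\ T\to\mathrm{pSpec}\ T$; this yields a unique set-theoretic map $^b\varphi:\mathrm{pSpec}\ T\to\mathrm{pSpec}\ R$ with $^b\varphi\circ\lambda_T=\lambda_R\circ{}^a\varphi$, which is the asserted commutativity. For continuity, I would invoke the fact that $\mathrm{pSpec}\ T$ carries the quotient topology induced by $\lambda_T$: for $U\subseteq\mathrm{pSpec}\ R$ open, one has $\lambda_T^{-1}\bigl(({}^b\varphi)^{-1}(U)\bigr)=({}^a\varphi)^{-1}\bigl(\lambda_R^{-1}(U)\bigr)$, which is open in $\mathrm{Spec}\ T$ because $\lambda_R$ and $^a\varphi$ are continuous; hence $({}^b\varphi)^{-1}(U)$ is open in $\mathrm{pSpec}\ T$.

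There is no genuine obstacle here: the statement is essentially the observation that $\mathcal{R}$ is defined precisely so that any inclusion-preserving map of spectra — in particular any contraction map — respects it, together with the universal property of the quotient topology. The only points requiring a little care are making the factorization through $\lambda_T$ explicit (using surjectivity of $\lambda_T$ for the uniqueness of $^b\varphi$) and noting that $^a\varphi$ is continuous, which is standard.
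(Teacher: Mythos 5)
Your proof is correct and follows essentially the same route as the paper: check that the contraction map preserves inclusions (hence respects the equivalence $\mathcal{R}$), define $^b\varphi$ on classes by $^b\varphi(\lambda_T(L))=\lambda_R({}^a\varphi(L))$, and deduce continuity from the continuity of $\lambda_R\circ{}^a\varphi$ and the quotient topology on $\mathrm{pSpec}\ T$. You merely spell out the quotient-topology step that the paper leaves implicit.
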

\begin{proof} If $L$ and $L'$ are prime ideals of $T$ such that $L\subseteq L'$ then $^a\varphi(L)\subseteq {}^a\varphi(L')$. Hence, if $x\in\mathrm{pSpec}\ T$, we can put $^b\varphi(x)=\lambda_R({}^a\varphi(L))$ where $L\in x$. Since $\lambda_R$ and ${}^a\varphi$ are continuous, so is $^b\varphi$.  \end{proof}

\medskip
An exact sequence $0 \rightarrow F \rightarrow E \rightarrow G \rightarrow 0$  is {\it pure}
if it remains exact when tensoring it with any $R$-module. Then, we say that $F$ is a \textit{pure} submodule of $E$. By \cite[Proposition 8.6]{FuSa01} $F$ is a pure submodule of $E$ if every finite system of equations
\[\sum_{i=1}^{i=n}r_{j,i}x_i=y_j\in F,\qquad (1\leq j\leq p),\]
with coefficients $r_{j,i}\in R$ and unknowns $x_1,\dots,x_n$, has a solution in $F$ whenever it is solvable in $E$. The following proposition is well known.
\begin{proposition}
\label{P:purIdeal} Let $A$ be an ideal of a ring $R$. The following conditions are equivalent:
\begin{enumerate}
\item $A$ is a pure ideal of $R$;
\item for each finite family $(a_i)_{1\leq i\leq n}$ of elements of $A$ there exists $t\in A$ such that $a_i=a_it,\ \forall i,\ 1\leq i\leq n$;
\item for all $a\in A$ there exists $b\in A$ such that $a=ab$;
\item $R/A$ is a flat $R$-module.
\end{enumerate} 
Moreover, if $A$ is finitely generated, then $A$ is pure if and only if it is generated by an idempotent.
\end{proposition}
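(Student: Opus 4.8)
The plan is to establish $(iv)\Leftrightarrow(i)$ homologically, then the cycle $(i)\Rightarrow(iii)\Rightarrow(ii)\Rightarrow(i)$, and finally the finitely generated case. Everything below is a string of short verifications; I do not expect a genuine obstacle, the only mildly non-mechanical point being the choice of the ``Boolean'' combination in the inductive step $(iii)\Rightarrow(ii)$.

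For $(i)\Leftrightarrow(iv)$ I would tensor $0\rightarrow A\rightarrow R\rightarrow R/A\rightarrow 0$ with an arbitrary $R$-module $M$ and read off the $\mathrm{Tor}$ long exact sequence. Since $R$ is free, $\mathrm{Tor}_1^R(R,M)=0$, so $0\rightarrow\mathrm{Tor}_1^R(R/A,M)\rightarrow A\otimes_R M\rightarrow M$ is exact, while exactness of $A\otimes_R M\rightarrow M\rightarrow (R/A)\otimes_R M\rightarrow 0$ is automatic from right exactness of the tensor product. Hence the canonical map $A\otimes_R M\rightarrow M$ is injective for every $M$ — which is exactly the purity of $A$ in $R$ — if and only if $\mathrm{Tor}_1^R(R/A,M)=0$ for every $M$, i.e. if and only if $R/A$ is flat.

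For the implications among $(i)$, $(ii)$, $(iii)$ I would use the equational description of purity recalled just before the statement from \cite[Proposition 8.6]{FuSa01}. For $(i)\Rightarrow(iii)$: given $a\in A$, the single equation $ax=a$ has right-hand side $a\in A$ and is solved in $R$ by $x=1$, so by purity it has a solution $x=b\in A$, giving $a=ab$. For $(iii)\Rightarrow(ii)$ I would induct on $n$, the case $n=1$ being $(iii)$; if $t'\in A$ satisfies $a_i=a_it'$ for $i\le n-1$ and $b\in A$ satisfies $a_n=a_nb$, put $t=t'+b-t'b\in A$, and a one-line computation (using commutativity) gives $a_it=a_i$ for all $i\le n$. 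For $(ii)\Rightarrow(i)$: given a finite system $\sum_{i}r_{j,i}x_i=y_j\in A$ ($1\le j\le p$) with a solution $(s_i)$ in $R$, choose by $(ii)$ some $t\in A$ with $y_jt=y_j$ for all $j$; then $(s_it)_i$ is a solution lying in $A$, so $A$ is pure.

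For the last assertion, if $A=(a_1,\dots,a_n)$ is pure I would pick $t\in A$ as in $(ii)$ with $a_it=a_i$ for every $i$; then $at=a$ for all $a\in A$, whence $t^2=t$ and $A=tR$ (since $a=at\in tR$ for each $a\in A$). Conversely, if $A=eR$ with $e$ idempotent, then $ae=a$ for all $a\in A$, so $(iii)$ holds and $A$ is pure; alternatively one may note that $R/A\cong(1-e)R$ is a direct summand of $R$, hence flat, and invoke $(iv)\Rightarrow(i)$.
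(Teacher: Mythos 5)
Your proof is correct. The equational cycle $(i)\Rightarrow(iii)\Rightarrow(ii)\Rightarrow(i)$ matches the paper's almost verbatim (your inductive step in $(iii)\Rightarrow(ii)$, applying the induction hypothesis directly to $a_1,\dots,a_{n-1}$ and then forming $t'+b-t'b$, is in fact a slight simplification of the paper's, which first replaces the $a_i$ by $a_i-ta_i$; the verification is the same one-liner either way). Where you genuinely diverge is in how condition $(iv)$ is attached: the paper proves $(iii)\Leftrightarrow(iv)$ directly and Tor-free, using the criterion that the cyclic module $R/A$ is flat if and only if $A\cap B=AB$ for every ideal $B$ (so $(iii)\Rightarrow(iv)$ is ``$a=at\in AB$'' and $(iv)\Rightarrow(iii)$ is ``$Ra=A\cap Ra=Aa$''), whereas you prove $(i)\Leftrightarrow(iv)$ via the long exact sequence identifying $\mathrm{Tor}_1^R(R/A,M)$ with $\ker(A\otimes_RM\to M)$. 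Both are standard; the paper's version is more elementary and self-contained at the level of ideals, while yours makes the equivalence of flatness with purity of $0\to A\to R\to R/A\to 0$ conceptually transparent in one stroke. You also supply an explicit argument for the final ``Moreover'' clause (choose $t$ with $a_it=a_i$ for a finite generating set, deduce $t^2=t$ and $A=tR$), which the paper states without proof; that argument is correct and worth having.
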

\begin{proof}
$(ii)\Rightarrow (iii)$ is obvious.

$(iii)\Rightarrow (iv)$. Let $B$ be an ideal of $R$. We must prove that $A\cap B=AB$. If $a\in A\cap B$, there exists $t\in A$ such that $a=at$. Hence $a\in AB$.

$(iv)\Rightarrow (iii)$. If $a\in A$, then $Ra=A\cap Ra=Aa$ by $(iv)$.

$(i)\Rightarrow (iii)$. If $a\in A$, $1$ is solution of the equation $ax=a$. So, this equation has a solution in $A$.

$(iii)\Rightarrow (ii)$. Let $a_1,\dots,a_n$ be elements of $A$. We proceed by induction on $n$. There exists $t\in A$ such that $a_n=ta_n$. By induction hypothesis there exists $s\in A$ such that $a_i-ta_i=s(a_i-ta_i),\ \forall i,\ 1\leq i\leq (n-1)$. Now, it is easy to check that $(s+t-st)a_i=a_i,\ \forall i,\ 1\leq i\leq n$.

$(ii)\Rightarrow (i)$. We consider the following system of equations:
\[\sum_{i=1}^{i=n}r_{j,i}x_i=a_j\in A,\ 1\leq j\leq p.\]
Assume that $(c_1,\dots,c_n)$ is a solution of this system in $R$. There exists $s\in A$ such that $a_j=sa_j,\ \forall j,\ 1\leq j\leq p$. So, $(sc_1,\dots,sc_n)$ is a solution of this system in $A$.
\end{proof}

We set  $0_P$  the kernel of the natural map $R\rightarrow R_P$ where $P\in\mathrm{Spec}\ R$.

\begin{lemma}
\label{L:pure} Let $R$ be a ring and let $C$ a closed subset of $\mathrm{Spec}\ R$. Then $C$ is the inverse image of a closed subset of $\mathrm{pSpec}\ R$ by $\lambda_R$ if and only if $C=V(A)$ where $A$ is a pure ideal. Moreover, in this case, $A=\cap_{P\in C}0_P$.
\end{lemma}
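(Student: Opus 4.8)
The plan is to prove both implications and then identify the ideal $A$. For the forward direction, suppose $C = \lambda_R^{-1}(F)$ for some closed $F \subseteq \mathrm{pSpec}\ R$. Since $\lambda_R$ is continuous and $C$ is closed, $C$ is a closed $\mathcal{R}$-saturated subset of $\mathrm{Spec}\ R$, so $C = V(A)$ for $A = \bigcap_{P \in C} P$ (the intersection of all primes in $C$, which is radical). The key point is that $\mathcal{R}$-saturation forces $A$ to be pure: if $P \in C$ and $P' \subseteq P$ is a smaller prime, then $P' \in C$ as well (since $P' \mathcal{R} P$), so $A \subseteq P'$. Thus $A$ is contained in every prime contained in some prime of $C$, in particular in every minimal prime of every prime of $C$. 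I expect to exploit this via the characterization of purity in Proposition~\ref{P:purIdeal}(iii): given $a \in A$, I want $b \in A$ with $a = ab$, equivalently $a(1-b) = 0$; since $1 - b$ should avoid exactly the primes in $C$, the natural candidate is to show $V(a) \cup C = \mathrm{Spec}\ R$ fails to be an obstruction, i.e. that $\mathrm{Ann}(a) \not\subseteq P$ for any $P \in C$. This is the crux: for $P \in C$, localize at $P$; because $P$ contains a whole chain down to a minimal prime $L \subseteq A$, and $a \in A \subseteq L$, the image of $a$ in $R_P$ lies in $L R_P$, and one argues it is actually zero, so $\mathrm{Ann}(a) \not\subseteq P$. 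Hence $\bigcup_{a \in A} D(\mathrm{Ann}(a))$ together with $D(A) = \mathrm{Spec}\ R \setminus C$ — wait, more carefully: for each $a \in A$ the closed set $V(\mathrm{Ann}(a))$ is disjoint from $C$, hence contained in $D(A)$, which is quasi-compact's complement; a standard compactness/partition-of-unity argument on finitely many generators then produces the required $b$, at least when things are finitely generated, but for the general (non-finitely-generated) case one works element by element as in Proposition~\ref{P:purIdeal}(iii), which is exactly why that equivalence was recorded.

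For the converse, suppose $C = V(A)$ with $A$ pure. I must show $C$ is $\mathcal{R}$-saturated, i.e. that $V(A) = \lambda_R^{-1}(\lambda_R(V(A)))$; since $\lambda_R(V(A))$ is closed in $\mathrm{pSpec}\ R$ (its preimage will then be $V(A)$, which is closed), this reduces to showing that $V(A)$ is a union of $\mathcal{R}$-classes. Equivalently: if $P \in V(A)$ and $P \mathcal{R} P'$ then $P' \in V(A)$. By the definition of $\mathcal{R}$ it suffices to treat $P' \subseteq P$ and $P' \supseteq P$. The case $P' \supseteq P$ is trivial since $A \subseteq P \subseteq P'$. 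For $P' \subseteq P$: here I use purity. Since $A$ is pure, $A = \bigcup\{\mathrm{Ann}(1-b) : b \in A\}$ in a suitable sense — more precisely, for $a \in A$ pick $b \in A$ with $a = ab$; then $a(1-b) = 0 \in P'$, and since $1 - b \notin A \subseteq P'$... hmm, rather: if $1 - b \in P'$ then $1 = b + (1-b) \in A + P'$; but $b \in A \subseteq P$, so if also $P' \subseteq P$ we would need to rule out $1-b \in P$, which holds because $b \in A \subseteq P$ forces $1 - b \notin P$, hence $1 - b \notin P' $ is wrong — let me restate cleanly: $b \in P$ gives $1 - b \notin P$, and since $P' \subseteq P$ this does not immediately give $1-b \notin P'$. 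Instead: $a(1-b) = 0 \in P'$ and $P'$ prime forces $a \in P'$ or $1-b \in P'$; but $1 - b \notin P$ and $P' \subseteq P$ gives $1 - b \notin P'$, hence $a \in P'$. As $a \in A$ was arbitrary, $A \subseteq P'$, so $P' \in V(A)$. That settles saturation.

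Finally, for the identification $A = \bigcap_{P \in C} 0_P$: recall $0_P = \ker(R \to R_P) = \{r \in R : \mathrm{Ann}(r) \not\subseteq P\}$. From the converse argument's mechanism, purity of $A$ gives $A \subseteq 0_P$ for each $P \in C = V(A)$ (each $a \in A$ has $b \in A$ with $a(1-b)=0$ and $1-b \notin P$, so $a \in 0_P$), giving one inclusion. For the reverse, if $r \in \bigcap_{P \in C} 0_P$ then $\mathrm{Ann}(r) \not\subseteq P$ for all $P \in V(A)$, so $V(\mathrm{Ann}(r)) \cap V(A) = \emptyset$, i.e. $V(\mathrm{Ann}(r) + A) = \emptyset$, whence $\mathrm{Ann}(r) + A = R$; write $1 = s + a$ with $s \in \mathrm{Ann}(r)$, $a \in A$, so $r = ra \in A$. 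The main obstacle, as indicated, is the forward implication — extracting purity of $A = \bigcap_{P \in C} P$ from mere $\mathcal{R}$-saturation of $C$; the delicate step is showing that for $P \in C$ and $a \in A$ the annihilator $\mathrm{Ann}(a)$ escapes $P$, which uses that $P$ lies above a minimal prime contained in $A$ and requires care about whether one can pass from "$a$ lies in every minimal prime below $P$" to "$a$ dies in $R_P$", so I would likely route this through the already-established equivalences in Proposition~\ref{P:purIdeal} applied after a localization, or alternatively prove the contrapositive by producing, from $\mathrm{Ann}(a) \subseteq P$, an $\mathcal{R}$-class meeting $C$ but not contained in it.
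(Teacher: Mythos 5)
Your converse direction (pure $\Rightarrow$ saturated) and your identification $A=\bigcap_{P\in C}0_P$ for a pure ideal $A$ with $V(A)=C$ are correct and essentially the paper's arguments. The forward direction, however, has a genuine gap, and it sits exactly at the step you yourself flagged as delicate. You take $A=\bigcap_{P\in C}P$ and try to show it is pure by arguing that for $a\in A$ and $P\in C$ the image of $a$ in $R_P$ is zero, i.e.\ $\mathrm{Ann}(a)\not\subseteq P$. This fails: saturation of $C$ only gives that $a$ lies in every minimal prime contained in $P$, hence that $a$ is \emph{nilpotent} in $R_P$, not zero. Concretely, for $R=k[x]/(x^2)$ and $C=\mathrm{Spec}\ R$ (which is saturated, being the preimage of the closed set $\mathrm{pSpec}\ R$), your candidate is $\bigcap_{P\in C}P=(x)$, which is not pure (from $x=xb$ with $b$ nilpotent one gets $x=xb^k=0$), and $\mathrm{Ann}(x)=(x)\subseteq P$. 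The "moreover" clause of the lemma already tells you your candidate is the wrong one: the pure ideal with vanishing locus $C$ is forced to be $\bigcap_{P\in C}0_P$, which is in general strictly smaller than $\bigcap_{P\in C}P$.

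The paper's fix is to define $A=\bigcap_{P\in C}0_P$ from the outset and first show $V(A)=C$: for $b\in\bigcap_{P\in C}P$ and $P\in C$, nilpotence of $b$ in $R_P$ gives $s_Pb^{n_P}=0$ with $s_P\notin P$, hence $b^{n_P}\in 0_L$ for every $L\in D(s_P)\cap C$; quasi-compactness of $C$ yields a single exponent $n$ with $b^n\in 0_L$ for all $L\in C$, i.e.\ $b^n\in A$, so $V(A)=V\bigl(\bigcap_{P\in C}P\bigr)=C$. Purity of this $A$ is then immediate from local triviality: $A_P=0$ for $P\in C$ (since $A\subseteq 0_P$) and $A_P=R_P$ for $P\notin V(A)$, so $R/A$ is locally, hence globally, flat and Proposition~\ref{P:purIdeal}(iv) applies. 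Your element-by-element attempt to produce $b$ with $a=ab$ cannot be repaired while keeping $A=\bigcap_{P\in C}P$; with the correct $A$ it is cleaner to pass through flatness of $R/A$ than through condition (iii).
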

\begin{proof} Let $A$ be a pure ideal, and let $P$ and $L$ be prime ideals such that $A\subseteq P$ and $L\subseteq P$. Since $A$ is pure, for each $a\in A$ there exists $b\in A$ such that $a=ab$. Then $(1-b)a=0$ and $(1-b)\notin P$, whence  $(1-b)\notin L$ and $a\in L$. So, $L\in V(A)$ and $V(A)$ is the inverse image of a closed subset of $\mathrm{pSpec}\ R$ by $\lambda_R$.

Let $C=V(B)$ where $B=\cap_{L\in C}L$. Suppose that $C$ is the inverse image of a closed subset of $\mathrm{pSpec}\ R$ by $\lambda_R$. We put $A=\cap_{P\in C}0_P$. Let $b\in B$ and $P\in C$. Then $C$ contains each minimal prime ideal contained in $P$. So, the image of $b$, by the natural map $R\rightarrow R_P$, belongs to the nilradical of $R_P$. It follows that there exist  $0\ne n_P\in\mathbb{N}$ and $s_P\in R\setminus P$ such that $s_Pb^{n_P}=0$. Hence, $\forall L\in D(s_P)\cap C,\ b^{n_P}\in 0_L$. A finite family $(D(s_{P_j}))_{1\leq j\leq m}$ covers $C$. Let $n=\max\{n_{P_1},\dots,n_{P_m}\}$. Then $b^n\in 0_L,\ \forall L\in C$, whence $b^n\in A$. We deduce that $C=V(A)$. Now, we have $A_P=0$ if $P\in V(A)$ and $A_P=R_P$ if $P\in D(A)$. Hence $A$ is a pure ideal.  \end{proof} 

\begin{corollary}
\label{C:connected} For any ring $R$ the following assertions hold:
\begin{enumerate}
\item a subset $U$ of $\mathrm{pSpec}\ R$ is open and closed if and only if there exists an idempotent $e\in R$ such that $\lambda_R^{\leftarrow}(U)=D(e)$;
\item $R$ is indecomposable if and only if $\mathrm{pSpec}\ R$ is connected.
\end{enumerate} 
\end{corollary}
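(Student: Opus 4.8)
The plan is to derive everything from Lemma~\ref{L:pure} together with the elementary dictionary between idempotents of $R$ and clopen subsets of $\mathrm{Spec}\ R$ (namely $e\mapsto D(e)=V(R(1-e))$), bearing in mind that $\lambda_R$ is a surjective quotient map, so that a subset of $\mathrm{pSpec}\ R$ is open (resp.\ closed) exactly when its $\lambda_R$-preimage is.

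For the nontrivial implication of $(i)$ I would start from a clopen $U$, set $C=\lambda_R^{\leftarrow}(U)$ and $C'=\lambda_R^{\leftarrow}(\mathrm{pSpec}\ R\setminus U)$, and apply Lemma~\ref{L:pure} to write $C=V(A)$ and $C'=V(B)$ with $A,B$ pure ideals. Since $C$ and $C'$ partition $\mathrm{Spec}\ R$, one gets $A+B=R$ (because $V(A+B)=C\cap C'=\emptyset$) and $A\cap B\subseteq\mathrm{Nil}(R)$ (because $V(A\cap B)=C\cup C'=\mathrm{Spec}\ R$). The Chinese remainder theorem then yields $R/(A\cap B)\cong R/A\times R/B$, so $A\cap B$ is again a pure ideal (the quotient by it is a product of flat modules, hence flat, so Proposition~\ref{P:purIdeal} applies); but a pure ideal contained in $\mathrm{Nil}(R)$ is zero, since $x=xf$ with $f$ nilpotent forces $x=0$. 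Hence $A\cap B=0$, $R\cong R/A\times R/B$, and the idempotent $e$ corresponding to this decomposition satisfies $A=R(1-e)$, i.e.\ $C=V(A)=D(e)$. Conversely, if $\lambda_R^{\leftarrow}(U)=D(e)$, then $D(e)=V(R(1-e))$ and its complement $D(1-e)=V(Re)$ are both of the form $V(\text{pure ideal})$ by Proposition~\ref{P:purIdeal}, hence by Lemma~\ref{L:pure} both are inverse images under $\lambda_R$ of closed subsets of $\mathrm{pSpec}\ R$; surjectivity of $\lambda_R$ identifies those closed subsets with $U$ and $\mathrm{pSpec}\ R\setminus U$ respectively, so $U$ is clopen. (One may shortcut this converse by invoking directly the quotient-map property of $\lambda_R$, since $D(e)$ is already clopen.)

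For $(ii)$, I would observe that by $(i)$ the assignment $U\mapsto e$, where $\lambda_R^{\leftarrow}(U)=D(e)$, is a bijection from the clopen subsets of $\mathrm{pSpec}\ R$ onto the idempotents of $R$: it is injective because $\lambda_R$ is surjective and $e\mapsto D(e)$ is injective, and it is onto by $(i)$. Under this bijection $\emptyset$ and $\mathrm{pSpec}\ R$ correspond to $0$ and $1$. Therefore $\mathrm{pSpec}\ R$ has only the two trivial clopen subsets, i.e.\ is connected, precisely when $R$ has only the two trivial idempotents, i.e.\ is indecomposable.

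I do not anticipate a genuine obstacle: the whole argument is bookkeeping layered on Lemma~\ref{L:pure} and standard facts about $\mathrm{Spec}$. The one spot deserving a little care is the forward direction of $(i)$, namely the passage from ``$A,B$ pure with $V(A),V(B)$ complementary'' to an actual idempotent; the key observations there are that $A\cap B$ is again pure and that a pure ideal inside the nilradical is trivial.
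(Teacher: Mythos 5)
Your argument is correct, but it takes a noticeably different route from the paper's for the substantive direction of $(i)$. The paper's entire proof is two lines: since $\lambda_R$ is a quotient map, $U$ is clopen iff $\lambda_R^{\leftarrow}(U)$ is clopen, and then it simply invokes the classical fact that the clopen subsets of $\mathrm{Spec}\ R$ are exactly the sets $D(e)$ for idempotents $e$; assertion $(ii)$ is then immediate. You share the first reduction (quotient topology) but, instead of citing the classical fact, you reconstruct the idempotent from scratch using the machinery of Section~\ref{S:pSpec}: Lemma~\ref{L:pure} gives pure ideals $A,B$ with $V(A),V(B)$ complementary, comaximality plus the Chinese remainder theorem shows $A\cap B$ is pure, and a pure ideal inside the nilradical must vanish, yielding $R\cong R/A\times R/B$ and the desired idempotent. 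This is sound (your observation that $x=xf$ with $f$ nilpotent forces $x=0$ is exactly right), and it has the virtue of being self-contained relative to Proposition~\ref{P:purIdeal} and Lemma~\ref{L:pure} rather than quoting an external fact; the cost is length, since you are in effect reproving the standard characterization of clopen subsets of $\mathrm{Spec}\ R$ in the saturated case. Your treatment of $(ii)$ via the bijection between clopen subsets of $\mathrm{pSpec}\ R$ and idempotents is also fine; note that surjectivity of that bijection rests on the fact, supplied by your converse argument through Lemma~\ref{L:pure}, that every $D(e)$ is saturated for the equivalence relation and hence is of the form $\lambda_R^{\leftarrow}(U)$ — worth making explicit, but not a gap.
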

\begin{proof}
A subset $U$ of $\mathrm{pSpec}\ R$ is open and closed if and only if is so $\lambda_R^{\leftarrow}(U)$ and it is well known that a subset $U'$ of $\mathrm{Spec}\ R$ is open and closed if and only if $U'=D(e)$ for some idempotent $e\in R$. The second assertion is an immediate consequence of the first.
\end{proof}

\bigskip

If $\{x\}$ is closed in $\mathrm{pSpec}\ R$ we denote by $A(x)$ the pure ideal of $R$ for which $x=\mathrm{V}(A(x))$. A topological space is called \textit{totally disconnected} if each of its connected components contains only one point. Every Hausdorff topological space $X$ with a base of clopen neighbourhoods is totally disconnected and the converse holds if $X$ is compact (see \cite[Theorem 16.17]{GiJe60}).

\begin{proposition} \label{P:discon}
Let $R$ be a ring. Then  the following conditions are equivalent:
\begin{enumerate}
\item $\mathrm{pSpec}\ R$ is  totally disconnected;
\item for each $x\in \mathrm{pSpec}\ R$, $\{x\}$ is closed and $A(x)$ is generated by idempotents.
\end{enumerate} 
\end{proposition}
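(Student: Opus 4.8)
The strategy is to transport both conditions back to $\mathrm{Spec}\ R$ along the quotient map $\lambda_R$, using two ingredients already available: by Corollary~\ref{C:connected} the clopen subsets of $\mathrm{pSpec}\ R$ are exactly the images under $\lambda_R$ of the ($\mathcal R$-saturated) clopen sets $D(e)$ with $e$ idempotent, and by Lemma~\ref{L:pure} a pure ideal $A$ is recovered from its zero set as $A=\bigcap_{P\in V(A)}0_P$, so a pure ideal is determined by $V(A)$. I will also use the elementary remark that an ideal generated by idempotents is pure (immediate from Proposition~\ref{P:purIdeal}, a finite sum of idempotents generating a principal ideal with idempotent generator).

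For $(ii)\Rightarrow(i)$ it suffices to separate any two distinct points $x\ne y$ of $\mathrm{pSpec}\ R$ by complementary clopen subsets, for then no connected subset can contain two points. Since $\{x\}$ is closed, the fibres $\lambda_R^{\leftarrow}(\{x\})=V(A(x))$ and $\lambda_R^{\leftarrow}(\{y\})$ are disjoint; choosing $L\in\lambda_R^{\leftarrow}(\{y\})$ we get $A(x)\not\subseteq L$, hence some idempotent generator $e$ of $A(x)$ lies outside $L$. Then $D(e)=V\big((1-e)R\big)$ descends (Lemma~\ref{L:pure} applied to the finitely generated pure ideal $(1-e)R$) to a clopen $U\subseteq\mathrm{pSpec}\ R$; we have $y\in U$ because $L\in D(e)$, while $x\notin U$ because $e\in A(x)$ forces $V(A(x))\subseteq V(e)=\mathrm{Spec}\ R\setminus D(e)$. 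So $U$ and its complement separate $x$ from $y$.

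For $(i)\Rightarrow(ii)$: since $\overline{\{x\}}$ is connected, total disconnectedness forces $\overline{\{x\}}=\{x\}$, so every point is closed and $A(x)$ is defined with $\lambda_R^{\leftarrow}(\{x\})=V(A(x))$. The heart of the matter is that $A(x)$ is generated by idempotents. I would first check that $\lambda_R^{\leftarrow}(\{x\})$ is a connected component $K$ of $\mathrm{Spec}\ R$: it is connected, an $\mathcal R$-class being a union of chains of primes, so it lies in a unique component $K$; conversely $\lambda_R(K)$ is connected in $\mathrm{pSpec}\ R$, hence by (i) the single point $x$, giving $K\subseteq\lambda_R^{\leftarrow}(\{x\})$. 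Then I would invoke that every connected component of $\mathrm{Spec}\ R$ is the intersection of the clopen subsets of $\mathrm{Spec}\ R$ containing it (components coincide with quasi-components): the components are the fibres of $\mathrm{Spec}\ R\to\mathrm{Spec}\ \mathrm{B}(R)$, each such fibre being the spectrum of a ring with no nontrivial idempotent, hence connected, and an intersection of clopens since $\mathrm{Spec}\ \mathrm{B}(R)$ is a Stone space. Writing $K=\bigcap_\alpha D(e_\alpha)=\bigcap_\alpha V(1-e_\alpha)=V\big(\sum_\alpha(1-e_\alpha)R\big)$, the ideals $A(x)$ and $\sum_\alpha(1-e_\alpha)R$ are both pure with the same zero set, so Lemma~\ref{L:pure} gives $A(x)=\sum_\alpha(1-e_\alpha)R$, generated by idempotents.

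The one real obstacle is the topological input used in $(i)\Rightarrow(ii)$ — that for the spectral space $\mathrm{Spec}\ R$ (equivalently, for $\mathrm{pSpec}\ R$ via the continuous surjection $\tau_R$ onto $\mathrm{Spec}\ \mathrm{B}(R)$ mentioned in the introduction) connected components agree with quasi-components. This is not a consequence of quasi-compactness alone, so one must either cite it or obtain it from the identification of $\pi_0(\mathrm{Spec}\ R)$ with the Stone space $\mathrm{Spec}\ \mathrm{B}(R)$; once it is granted, the rest is routine manipulation of $\lambda_R$, of $V(\cdot)$, and of purity.
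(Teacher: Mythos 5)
Your proof is correct. The direction $(ii)\Rightarrow(i)$ is essentially the paper's argument: both produce, for distinct $x,y$, an idempotent $e$ with $e\in A(x)$ and a complementary clopen pair $\lambda_R^{\rightarrow}(D(e))$, $\lambda_R^{\rightarrow}(D(1-e))$ separating them (the paper extracts $e$ from the relation $A(x)+A(y)=R$, you from $A(x)\not\subseteq L$ for $L\in y$; this is a cosmetic difference). For $(i)\Rightarrow(ii)$ the routes genuinely diverge. The paper argues elementwise: given $a\in A(x)$ it finds $b$ with $a=ab$, observes that $\mathrm{pSpec}\ R\setminus\lambda_R^{\rightarrow}(V(1-b))$ is an open neighbourhood of $x$, and uses the existence of a clopen base in the compact totally disconnected space $\mathrm{pSpec}\ R$ (citing Gillman--Jerison) together with Corollary~\ref{C:connected} to squeeze in an idempotent $e$ with $ea=0$ and $1-e\in A(x)$, so that $a=a(1-e)$ lies in the ideal generated by the idempotents of $A(x)$. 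You instead identify $\lambda_R^{\leftarrow}(\{x\})$ with a connected component of $\mathrm{Spec}\ R$, invoke the coincidence of components and quasi-components for $\mathrm{Spec}\ R$ (equivalently, the indecomposability of the Pierce stalks together with the Stone space $\mathrm{Spec}\ \mathrm{B}(R)$), write the component as $V\bigl(\sum_\alpha(1-e_\alpha)R\bigr)$, and conclude by the uniqueness clause $A=\bigcap_{P\in V(A)}0_P$ of Lemma~\ref{L:pure} that $A(x)$ equals this idempotent-generated pure ideal. Your version is more structural and makes visible the link with Proposition~\ref{P:stalk} (the map $\tau_R$), at the cost of importing the components-equal-quasi-components fact for spectral spaces, which you rightly flag as the one input requiring citation; the paper's version is more self-contained in spirit but rests on exactly the same compactness phenomenon, packaged as the clopen-base property of $\mathrm{pSpec}\ R$. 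Both arguments also correctly dispose of the closedness of $\{x\}$ (components are closed), a point the paper leaves implicit and you make explicit.
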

\begin{proof} 
$(ii)\Rightarrow (i)$. Let $x,y\in \mathrm{pSpec}\ R$, $x\ne y$. Then $V(A(x))\cap V(A(y))=\emptyset$. So, $A(x)+A(y)=R$, whence $\exists a\in A(x)$ such that $(1-a)\in A(y)$. There exists an idempotent $e\in A(x)$ such that $a=ae$. So, $(1-e)(1-a)=(1-e)\in A(y)$. We easily deduce  that $x\subseteq D(1-e)$ and $y\subseteq D(e)$.

$(i)\Rightarrow (ii)$. Let $x\in \mathrm{pSpec}\ R$ and $a\in A(x)$. There exists $b\in A(x)$ such that $a=ab$. So $(1-b)a=0$. Clearly $x\subseteq D(1-b)$. Since $\mathrm{pSpec}\ R$ is Hausdorff and $\mathrm{Spec}\ R$ is quasi-compact, $\lambda_R^{\rightarrow}(V(1-b))$ is closed. Therefore $U=\mathrm{pSpec}\ R\setminus \lambda_R^{\rightarrow}(V(1-b))$ is open and contains $x$. The condition $\mathrm{pSpec}\ R$ is totally disconnected implies that there exists an idempotent $e$ such that $x\subseteq D(e)\subseteq\lambda_R^{\leftarrow}(U)\subseteq D(1-b)$. 	If follows that $e\in R(1-b)$. So $ea=0$ and consequently $a=a(1-e)$. From $x\subseteq D(e)$ and $e(1-e)=0$ we deduce that $(1-e)\in A(x)$ by Lemma~\ref{L:pure}.
\end{proof}

\medskip For any ring $R$, $\mathrm{B}(R)$ is the set of idempotents of $R$. For any $e,e'\in\mathrm{B}(R)$ we put $e\oplus e'=e+e'-ee'$ and $e\odot e'=ee'$. With these operations $\mathrm{B}(R)$ is a Boolean ring. The space $\mathrm{Spec}\ \mathrm{B}(R)$ is denoted by $\mathrm{X}(R)$. Then $\mathrm{X}(R)$ is Hausdorff, compact and totally disconnected. If $x\in\mathrm{X}(R)$ the {\it stalk} of $R$ at $x$ is the quotient of $R$ by the ideal generated by the idempotents contained in $x$.
 
\begin{proposition} \label{P:stalk} Let $R$ be a ring. The following assertions hold:
\begin{enumerate}
\item  there exists a surjective continuous map $\tau_R:\mathrm{pSpec}\ R\rightarrow\mathrm{X}(R)$;
\item $\mathrm{pSpec}\ R$ is totally disconnected if and only if $\tau_R$ is a homeomorphism. In this case, for each $x\in\mathrm{pSpec}\ R$, $R/A(x)$ is the stalk of $R$ at $\tau_R(x)$.
\end{enumerate} 
\end{proposition}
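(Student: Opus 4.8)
The plan for (1) is to define $\tau_R$ directly on representatives by $\tau_R(\lambda_R(P))=P\cap\mathrm{B}(R)$, rather than via Proposition~\ref{P:contmorp}, since the inclusion $\mathrm{B}(R)\hookrightarrow R$ is not a ring homomorphism (it does not respect $\oplus$). First I would check that, for $P\in\mathrm{Spec}\ R$, the set $P\cap\mathrm{B}(R)$ is a prime ideal of the Boolean ring $\mathrm{B}(R)$: it is closed under $\oplus$ and $\odot$ because $P$ is an ideal of $R$; it is proper because $1\notin P$; and it is prime because $P$ is. The assignment $P\mapsto P\cap\mathrm{B}(R)$ is continuous $\mathrm{Spec}\ R\to\mathrm{X}(R)$, as the preimage of a basic clopen set $D(e)$ with $e\in\mathrm{B}(R)$ is $D(e)\subseteq\mathrm{Spec}\ R$. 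Since $\mathcal{R}$ is generated by inclusions, to see that this map factors through $\lambda_R$ it suffices to treat the case $L\subseteq L'$: if $e\in L'\cap\mathrm{B}(R)$, then $1-e\notin L'$, hence $1-e\notin L$, and $e(1-e)=0\in L$ forces $e\in L$; thus $L'\cap\mathrm{B}(R)\subseteq L\cap\mathrm{B}(R)$, and the reverse inclusion is clear. This yields a continuous $\tau_R:\mathrm{pSpec}\ R\to\mathrm{X}(R)$.

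For surjectivity, let $\mathfrak{p}\in\mathrm{X}(R)$ and let $\mathfrak{p}R$ be the ideal of $R$ it generates. If $\mathfrak{p}R=R$, write $1=\sum_{i=1}^nr_ie_i$ with $e_i\in\mathfrak{p}$; then $e:=e_1\oplus\cdots\oplus e_n\in\mathfrak{p}$ satisfies $ee_i=e_i$ for every $i$ (easy induction), so $e=e\cdot 1=\sum_ir_i(ee_i)=\sum_ir_ie_i=1$, contradicting $1\notin\mathfrak{p}$. Hence $\mathfrak{p}R$ is contained in some maximal ideal $P$, so $\mathfrak{p}\subseteq P\cap\mathrm{B}(R)\subsetneq\mathrm{B}(R)$; as every prime of a Boolean ring is maximal, $\mathfrak{p}=P\cap\mathrm{B}(R)=\tau_R(\lambda_R(P))$.

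For (2), one direction is immediate: $\mathrm{X}(R)$ is totally disconnected, so a homeomorphism $\tau_R$ forces $\mathrm{pSpec}\ R$ to be one too. Conversely, assume $\mathrm{pSpec}\ R$ is totally disconnected. I would show $\tau_R$ is injective and then invoke that a continuous bijection from the quasi-compact space $\mathrm{pSpec}\ R$ onto the Hausdorff space $\mathrm{X}(R)$ is a homeomorphism. For injectivity, let $x\neq y$; by Proposition~\ref{P:discon} both are closed and $A(x),A(y)$ are generated by idempotents, and, exactly as in the proof of $(ii)\Rightarrow(i)$ there, from $V(A(x))\cap V(A(y))=\emptyset$ one obtains an idempotent $e$ with $e\in A(x)$ and $1-e\in A(y)$. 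Picking $P\in x$ and $L\in y$ gives $e\in\tau_R(x)$ and $1-e\in\tau_R(y)$; if these were equal, this prime ideal of $\mathrm{B}(R)$ would contain $e\oplus(1-e)=1$, which is absurd.

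It remains, under the same hypothesis, to identify $R/A(x)$ with the stalk of $R$ at $\mathfrak{p}:=\tau_R(x)$, i.e. with $R/I$ where $I$ is the ideal of $R$ generated by $\{e\in\mathrm{B}(R):e\in\mathfrak{p}\}$; so I must check $I=A(x)$. For $I\subseteq A(x)$: if $e\in\mathrm{B}(R)\cap\mathfrak{p}$ then $e\in P$ for every $P\in x$, whence $1-e\notin P$ and $(1-e)e=0$ give $e\in 0_P$; by Lemma~\ref{L:pure}, $A(x)=\bigcap_{P\in x}0_P$, so $e\in A(x)$. For $A(x)\subseteq I$: by Proposition~\ref{P:discon}, $A(x)$ is generated by idempotents, each of which lies in every $P\in x$, hence in $\mathfrak{p}$, hence is a generator of $I$. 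The steps needing genuine care are the surjectivity argument in (1) and the passage from total disconnectedness to injectivity of $\tau_R$; the rest is routine bookkeeping with Lemma~\ref{L:pure} and Proposition~\ref{P:discon}.
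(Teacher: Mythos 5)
Your proof is correct and follows essentially the same route as the paper's: the same definition of $\tau_R$ via $P\mapsto P\cap\mathrm{B}(R)$, the same surjectivity argument (a maximal ideal containing the ideal generated by a prime of $\mathrm{B}(R)$), the same compact-to-Hausdorff reduction of the homeomorphism to injectivity extracted from Proposition~\ref{P:discon}, and the same use of Lemma~\ref{L:pure} for the stalk identification. You merely spell out details the paper leaves implicit, such as the properness of $\mathfrak{p}R$ and the explicit verification that $A(x)$ equals the ideal generated by the idempotents in $\tau_R(x)$.
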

\begin{proof} $(i)$. If $L$ and $L'$ are prime ideals of $R$, $L\subseteq L'$, then $L\cap\mathrm{B}(R)=L'\cap\mathrm{B}(R)$ since each prime ideal of $\mathrm{B}(R)$ is maximal. So, $\tau_R$ is well defined. It is easy to check that for any idempotent $e\in R$, $\tau_R^{\leftarrow}(D(e))=\lambda_R^{\rightarrow}(D(e))$. Hence $\tau_R$ is continuous. For each $x\in\mathrm{X}(R)$, if $L$ is a maximal ideal containing all elements of $x$, then $x=\tau_R(\lambda_R(L))$, whence $\tau_R$ is surjective.

$(ii)$. It is obvious that $\mathrm{pSpec}\ R$ is totally disconnected if $\tau_R$ is a homeomorphism. Conversely, since  $\mathrm{pSpec}\ R$ is compact and $\mathrm{X}(R)$ is Hausdorff it is enough to show that $\tau_R$ is injective. Let $x,x'\in\mathrm{pSpec}\ R,\ x\ne x'$. There exists an idempotent $e$ such that $x\in\lambda_R^{\rightarrow}(D(e))$ and  $x'\in\lambda_R^{\rightarrow}(D(1-e))$. It follows that $e\notin\tau_R(x)$ and $e\in\tau_R(x')$. Hence $\tau_R(x)\ne\tau_R(x')$. The last assertion is a consequence of Proposition~\ref{P:discon} and Lemma~\ref{L:pure}.
\end{proof}

\medskip

The following lemma will be useful to show some important results of this paper.

\begin{lemma} \label{L:poly}
Let $R$ be a ring such that $\mathrm{pSpec}\ R$ is totally disconnected. Then any $R$-algebra 
  $S$ (which is not necessarily commutative) satisfies the following condition: 
let
  $f_1,\dots,f_k$ be polynomials over $S$ in noncommuting variables
  $x_1,\dots,x_m,y_1,\dots,y_n$. Let $a_1,\dots,a_m\in S$. Assume that,
   $\forall x\in \mathrm{pSpec}\ R$ there exist $b_1,\dots,b_n\in S$
  such that:
   
   \centerline{$f_i(a_1,\dots,a_m,b_1,\dots,b_n)\in A(x)S$,  $\forall i$, $1\leq
  i\leq k$.}
   Then there exist $d_1,\dots,d_n\in S$ such that:
  
  \centerline{$f_i(a_1,\dots,a_m,d_1,\dots,d_n)=0$,  $\forall i$, $1\leq
  i\leq k$.} 
\end{lemma}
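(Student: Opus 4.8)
The plan is to patch the pointwise data together over the space $\mathrm{pSpec}\ R$, which under the hypothesis is quasi-compact and totally disconnected, using a finite partition into clopen sets and the associated complete orthogonal system of idempotents of $R$.

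First I would put the local data into usable form. By Proposition~\ref{P:discon}, for every $x\in\mathrm{pSpec}\ R$ the singleton $\{x\}$ is closed and $A(x)$ is generated by idempotents; moreover $\mathrm{pSpec}\ R$ is quasi-compact. Fix $x$ and choose $b^{(x)}_1,\dots,b^{(x)}_n\in S$ with $f_i(a_1,\dots,a_m,b^{(x)}_1,\dots,b^{(x)}_n)\in A(x)S$ for all $i$. Since $A(x)$ is generated by its idempotents and $e\oplus e'$ is again an idempotent of $A(x)$ whenever $e,e'$ are, a finite subset of $A(x)S=\sum_{e\in\mathrm{B}(R)\cap A(x)}eS$ is contained in $eS$ for a single idempotent $e=e(x)\in A(x)$; as $e(x)$ is central in the $R$-algebra $S$, this is exactly the statement $(1-e(x))f_i(a,b^{(x)})=0$ for every $i$. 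Because $A(x)$ is a proper ideal, every prime $L\supseteq A(x)$ contains $e(x)$ but not $1-e(x)$, so the fibre $\lambda_R^{\leftarrow}(\{x\})=V(A(x))$ meets $D(1-e(x))$; hence, by Corollary~\ref{C:connected}, $U_x:=\lambda_R^{\rightarrow}(D(1-e(x)))$ is a clopen neighbourhood of $x$ in $\mathrm{pSpec}\ R$.

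Next I would invoke quasi-compactness to extract a finite subcover $\mathrm{pSpec}\ R=U_{x_1}\cup\cdots\cup U_{x_p}$, disjointify it into clopen sets $V_j=U_{x_j}\setminus\bigcup_{j'<j}U_{x_{j'}}$, and apply Corollary~\ref{C:connected} again to write $\lambda_R^{\leftarrow}(V_j)=D(\varepsilon_j)$ with $\varepsilon_j$ idempotent. Comparing the corresponding clopen subsets of $\mathrm{Spec}\ R$ and using that a nilpotent idempotent is $0$, one checks that $\varepsilon_i\varepsilon_j=0$ for $i\neq j$ and $\sum_j\varepsilon_j=1$. From $V_j\subseteq U_{x_j}$ one gets $D(\varepsilon_j)\subseteq D(1-e(x_j))$ in $\mathrm{Spec}\ R$, i.e. $\varepsilon_j e(x_j)$ lies in every prime ideal, hence $\varepsilon_j e(x_j)=0$ and $\varepsilon_j(1-e(x_j))=\varepsilon_j$.

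Finally, I would set $d_\ell=\sum_{j=1}^{p}\varepsilon_j b^{(x_j)}_\ell$. Orthogonality and centrality of the $\varepsilon_j$ give $\varepsilon_j d_\ell=\varepsilon_j b^{(x_j)}_\ell$, and since each $\varepsilon_j$ is a central idempotent of $S$, a monomial-by-monomial telescoping (a central idempotent may be distributed across every positive-degree monomial, while it acts trivially on the constant term) shows $\varepsilon_j f_i(a,d)=\varepsilon_j f_i(a,b^{(x_j)})=\varepsilon_j(1-e(x_j))f_i(a,b^{(x_j)})=0$ for all $i$. Summing over $j$ and using $\sum_j\varepsilon_j=1$ yields $f_i(a,d)=0$ for every $i$, as required. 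The main obstacle I anticipate is the topological bookkeeping between $\mathrm{Spec}\ R$ and $\mathrm{pSpec}\ R$ — in particular verifying cleanly that $U_x$ is clopen and that $V_j\subseteq U_{x_j}$ forces $\varepsilon_j e(x_j)=0$ — whereas the polynomial manipulation, though it must respect the noncommutativity of $S$, is routine.
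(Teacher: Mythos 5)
Your proposal is correct and follows essentially the same route as the paper's proof: extract from each $A(x)$ a single idempotent annihilating the finitely many elements $f_i(a,b^{(x)})$, cover $\mathrm{pSpec}\ R$ by the resulting clopen sets, pass to a finite orthogonal system of idempotents, and glue via $d_\ell=\sum_j\varepsilon_j b^{(x_j)}_\ell$. The only difference is expository: you spell out the disjointification and the verification that $\varepsilon_j e(x_j)=0$, which the paper compresses into ``we may assume the $e_{x_j}$ are orthogonal.''
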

\begin{proof} Let $x\in \mathrm{pSpec}\ R$ and let $b_{x,1},\dots,b_{x,n}\in S$
  such that 
  \[f_i(a_1,\dots,a_m,b_{x,1},\dots,b_{x,n})\in A(x)S,\  \forall i,\ 1\leq i\leq k.\]
  Then there exists a finitely generated ideal $A\subseteq A(x)$ such that \[f_i(a_1,\dots,a_m,b_{x,1},\dots,b_{x,n})\in AS,\  \forall i,\ 1\leq i\leq k.\]
   There exists an idempotent $e_x$ such that $A\subseteq R(1-e_x)\subseteq A(x)$. Hence \[e_xf_i(a_1,\dots,a_m,b_{x,1},\dots,b_{x,n})=0,\  \forall i,\ 1\leq i\leq k.\] 
A finite family $(\lambda_R^{\rightarrow}(D(e_{x_j})))_{1\leq j\leq p}$ covers $\mathrm{pSpec}\ R$. We may assume that $(e_{x_j})_{1\leq j\leq p}$ is a family of orthogonal idempotents. We put $d_{\ell}=e_{x_1}b_{x_1,\ell}+\dots+e_{x_p}b_{x_p,\ell},\ \forall\ell,\ 1\leq\ell\leq n.$ Then $f_i(a_1,\dots,a_m,d_1,\dots,d_n)=0$,  $\forall i$, $1\leq
  i\leq k$.  \end{proof}
  
We denote by $\mathrm{gen}\ M$  the minimal number of generators of a finitely generated $R$-module $M$. The following proposition is an example of an algebraic result that can be proven  by using  Lemma~\ref{L:poly}. Recall that the {\it trivial extension} $R\ltimes M$ of $R$ by $M$ is defined by:
\(R\ltimes M=\{\binom{r\ \ x}{0\ \ r}\mid r\in R,\ x\in M\}.\)
It is convenient to identify $R\ltimes M$ with the $R$-module $R\oplus M$ endowed with the following multiplication: \((r,x)(s,y)=(rs,ry+sx)\), where $r,s\in R$ and $x,y\in M$.
\begin{proposition} \label{P:gen} Let $R$ be a ring such that $\mathrm{pSpec}\ R$ is totally disconnected. Let $M$ be a finitely generated $R$-module and $F$ a finitely presented $R$-module. Then:
\begin{enumerate}
\item if, $\forall x\in\mathrm{pSpec}\ R$, $M/A(x)M$ is a homomorphic image of $F/A(x)F$, then $M$ is a homomorphic image of $F$;
\item $\mathrm{gen}\ M=\sup\{\mathrm{gen}\ (M/A(x)M)\mid x\in\mathrm{pSpec}\ R\};$
\item if $M$ is finitely presented and, $\forall x\in\mathrm{pSpec}\ R$, $M/A(x)M\cong F/A(x)F$, then $M\cong F$.
\end{enumerate} 
\end{proposition}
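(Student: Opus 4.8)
The plan is to deduce (ii) and (iii) from (i), and then to prove (i) by feeding the hypothesis into Lemma~\ref{L:poly}. For (ii), one inequality is free, since $\mathrm{gen}(M/A(x)M)\le\mathrm{gen}\ M$ for every $x$; for the other, put $p=\sup\{\mathrm{gen}(M/A(x)M)\mid x\in\mathrm{pSpec}\ R\}$, which is finite and at most $\mathrm{gen}\ M$, and apply (i) with the finitely presented module $F=R^p$: each $M/A(x)M$ is generated by at most $p$ elements over $R/A(x)$, hence is a homomorphic image of $(R/A(x))^p=F/A(x)F$, so by (i) $M$ is a homomorphic image of $R^p$ and $\mathrm{gen}\ M\le p$. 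For (iii), I would apply (i) once as stated and once with the roles of $M$ and $F$ exchanged — legitimate because in (iii) $M$ is assumed finitely presented and $F$ is finitely generated, and both directions are supplied by the isomorphisms $M/A(x)M\cong F/A(x)F$ — obtaining epimorphisms $f\colon F\to M$ and $g\colon M\to F$; then $g\circ f$ is a surjective endomorphism of the finitely generated module $F$, hence bijective, so $f$ is injective and, being onto, an isomorphism.

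It remains to prove (i). Fix generators $m_1,\dots,m_s$ of $M$, generators $\bar e_1,\dots,\bar e_n$ of $F$, and relations $\sum_{i=1}^n r_{ji}\bar e_i=0$ ($1\le j\le t$, $r_{ji}\in R$) generating all relations among the $\bar e_i$ — possible since $F$ is finitely presented. Then an $R$-homomorphism $F\to M$ is the same thing as a tuple $(u_i)_{1\le i\le n}$ of elements of $M$ with $\sum_i r_{ji}u_i=0$ for all $j$, and it is onto precisely when in addition $m_k=\sum_i c_{ki}u_i$ for some $c_{ki}\in R$ and all $k$. So I must solve this finite system for $(u_i)$ in $M$ and $(c_{ki})$ in $R$. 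The hypothesis provides, for each $x\in\mathrm{pSpec}\ R$, an epimorphism $F/A(x)F\to M/A(x)M$; lifting the images of the $\bar e_i$ to elements $v_{x,i}\in M$ gives $\sum_i r_{ji}v_{x,i}\in A(x)M$ for all $j$, and, since the $v_{x,i}$ generate $M$ modulo $A(x)M$, elements $c_{x,ki}\in R$ with $m_k-\sum_i c_{x,ki}v_{x,i}\in A(x)M$ for all $k$.

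Next I would apply Lemma~\ref{L:poly} to the $R$-algebra $S=R\ltimes M$: take parameters $a_k=(0,m_k)$, the polynomials $\sum_i r_{ji}y_i$ and $-x_k+\sum_i z_{ki}y_i$ in the unknowns $y_i,z_{ki}$, and the local solutions $y_i=(0,v_{x,i})$, $z_{ki}=(c_{x,ki},0)$; since $A(x)S=A(x)\oplus A(x)M$, the memberships above say precisely that these polynomials take values in $A(x)S$, so the lemma applies. More transparently, one may simply rerun the argument of Lemma~\ref{L:poly} inside $M$: by Proposition~\ref{P:discon} each $A(x)$ is generated by idempotents, so $A(x)M$ is the directed union of the submodules $eM$ over the idempotents $e$ of $A(x)$; hence a single idempotent $e_x\in A(x)$ carries all the finitely many elements $\sum_i r_{ji}v_{x,i}$ and $m_k-\sum_i c_{x,ki}v_{x,i}$ into $e_xM$, so that $1-e_x$ annihilates them. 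Because $e_x\in A(x)$, the clopen sets $\lambda_R^{\rightarrow}(D(1-e_x))$ cover the quasi-compact space $\mathrm{pSpec}\ R$; a finite subcover indexed by $x_1,\dots,x_p$ gives $(1-e_{x_1},\dots,1-e_{x_p})=R$, and passing to the orthogonal idempotents $h_l=(1-e_{x_l})\prod_{j<l}e_{x_j}$ (so that $\sum_l h_l=1$ and $h_l(1-e_{x_l})=h_l$), the elements $u_i=\sum_l h_lv_{x_l,i}\in M$ and $c_{ki}=\sum_l h_lc_{x_l,ki}\in R$ solve the system.

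The step I expect to be the main obstacle is the bookkeeping for surjectivity, and it is exactly why one passes to the trivial extension (or argues inside $M$) rather than staying in $R$: a careless application of Lemma~\ref{L:poly} returns a solution whose $R$-component need not vanish, which would destroy the required identity $m_k=\sum_i c_{ki}u_i$. The resolution is that the local solutions $v_{x,i}$ already lie in the $R$-submodule $M$ of $S$ and the patching only multiplies them by idempotents of $R$, so the global $u_i$ stay in $M$; the remaining verifications — that $\bar e_i\mapsto u_i$ is well defined and onto — are then routine, using $h_lh_{l'}=\delta_{ll'}h_l$ and $h_l(1-e_{x_l})=h_l$.
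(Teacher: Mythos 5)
Your proposal is correct, and it diverges from the paper's argument in two instructive places. For part (i) the paper also works in the trivial extension $S=R\ltimes M$, but it keeps Lemma~\ref{L:poly} as a black box by adjoining the auxiliary equations $Y_i=\sum_j Z_{i,j}(0,m_j)$, which force the unknowns $Y_i$ to lie in $0\ltimes M$ and thereby kill exactly the parasitic term you worry about; you instead diagnose that a bare application of the lemma's statement leaves the $R$-components of the $Y_i$ uncontrolled, and repair this by rerunning the idempotent-patching argument inside $M$ (your verification that $u_i=\sum_l h_l v_{x_l,i}$ and $c_{ki}=\sum_l h_l c_{x_l,ki}$ satisfy $\sum_i r_{ji}u_i=0$ and $m_k=\sum_i c_{ki}u_i$ is sound, using $h_l e_{x_l}=0$ and $\sum_l h_l=1$). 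Both fixes work; the paper's has the merit of never opening up the lemma's proof, yours of making the mechanism visible. For part (iii) your route is genuinely different and substantially shorter: the paper augments the system with the equations $\mathcal{E}_2$ encoding that every relation of $M$ is a combination of the relations of $F$, and builds an isomorphism directly, whereas you apply (i) twice (legitimately, since in (iii) both modules are finitely presented and the isomorphisms give surjections both ways modulo each $A(x)$) to obtain epimorphisms $f\colon F\to M$ and $g\colon M\to F$, and then invoke the standard fact that a surjective endomorphism of a finitely generated module over a commutative ring is injective, so $g\circ f$ is bijective and $f$ is an isomorphism. This trades the paper's relation bookkeeping for one classical external ingredient (the Vasconcelos determinant-trick theorem), which you should cite or prove in a final write-up. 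Part (ii) is handled identically in both.
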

\begin{proof}
$(i)$. Let  $\{m_1,\dots,m_p\}$ be a spanning set of $M$. Let $\{f_1,\dots,f_n\}$ be a spanning set of $F$ with the following relations: $\forall\ell, 1\leq\ell\leq n'$, $\sum_{i=1}^{i=n}c_{l,i}f_i=0$. We put $S=R\ltimes M$ the trivial extension of $R$ by $M$. We consider the following system $\mathcal{E}_1$ of polynomial equations in variables $X_{j,i},\ Y_i, Z_{i,j},\ 1\leq j\leq k,\ 1\leq i\leq n$:
\[\sum_{i=1}^{i=n}X_{j,i}Y_i=(0,m_j),\ \forall j,\ 1\leq j\leq k;\quad Y_i=\sum_{j=1}^{j=p}Z_{i,j}(0,m_j),\ \forall i,\ 1\leq i\leq n;\]
\[\sum_{i=1}^{i=n}(c_{l,i},0)Y_i=0,\ \forall\ell,\ 1\leq\ell\leq n'.\]
Thus $\mathcal{E}_1$ has a solution modulo $A(x)S$ for each $x\in\mathrm{pSpec}\ R$. By Lemma~\ref{L:poly} $\mathcal{E}_1$ has a solution $x_{j,i},\ y_i, z_{i,j},\ 1\leq j\leq p,\ 1\leq i\leq n$ in $S$. It is easy to check that $y_i=(0,m'_i),\ \forall i,\ 1\leq i\leq n$, and if $x_{j,i}=(r_{j,i},x_{j,i}'),\forall j,i, \ 1\leq j\leq p,\ 1\leq i\leq n$, then $m_j=\sum_{i=1}^{i=n}r_{j,i}m_i',\ \forall j,1\leq j\leq p$. We have also: $\forall\ell, 1\leq\ell\leq n'$, $\sum_{i=1}^{i=n}c_{l,i}m_i'=0$. Hence we get an epimorphism $\phi:F\rightarrow M$ defined by $\phi(f_i)=m'_i,\ \forall i,\ 1\leq i\leq n$.

$(ii)$ is an easy consequence of $(i)$.

$(iii)$. Let the notations be as in $(i)$. We assume that $m_1,\dots,m_p$ verify the follo\-wing relations: $\forall k,\ 1\leq k\leq p',\ \sum_{j=1}^{j=p}d_{k,j}m_j=0$. 

Observe that $F\cong M$ if $M$ has a spanning set $\{m'_1,\dots,m'_n\}$ with the relations: $\forall\ell,\ 1\leq\ell\leq n',\ \sum_{i=1}^{i=n}c_{l,i}m'i=0$. In this case there exist $r_{j,i}\in R$ such that $m_j=\sum_{i=1}^{i=n}r_{j,i}m'_i,\ \forall j,\ 1\leq j\leq p$. Thus \[\sum_{j=1}^{j=p}d_{k,j}m_j=\sum_{i=1}^{i=n}\left( \sum_{j=1}^{j=p}d_{k,j}r_{j,i}\right) m'_i=0,\ \forall k,\ 1\leq k\leq p'.\]
It follows that there exist $w_{k,l}\in R$ such that:
\[\sum_{j=1}^{j=p}d_{k,j}m_j=\sum_{\ell=1}^{\ell=n'}w_{k,l}\left( \sum_{i=1}^{i=n}c_{\ell,i}m'_i\right) =\sum_{i=1}^{i=n}\left( \sum_{\ell=1}^{\ell=n'}w_{k,l}c_{l,i}\right) m'i=0,\ \forall k,\ 1\leq k\leq p'.\]
We deduce that 
\begin{equation}\label{E:rel}
\sum_{j=1}^{j=p}d_{k,j}r_{j,i}=\sum_{\ell=1}^{\ell=n'}w_{k,l}c_{l,i},\ \forall k,\ 1\leq k\leq p',\ \forall i,\ 1\leq i\leq n.
\end{equation}
Conversely, if there exists an epimorphism $\phi:F\rightarrow M$ defined by $\phi(f_i)=m'_i$, then $\phi$ is bijective if each relation $(rel)$ $\sum_{i=1}^{i=n}a_im'_i=0$ is a linear combination of the relations $\sum_{i=1}^{i=n}c_{\ell,i}m'_i=0$. Since $m'_i$ is a linear combination of $m_1,\dots,m_p$, from the relation $(rel)$ we get a relation $(rel1)$ which is a linear combination of the relations $\sum_{j=i}^{j=p}d_{k,j}m_j=0$. By using  the equalities~\ref{E:rel}, we get that $(rel)$ is a linear combination of the relations $\sum_{i=1}^{i=n}c_{\ell,i}m'_i=0$.

Let $\mathcal{E}_2$  be the system  of polynomial equations in variables $X_{j,i},\ W_{k,\ell}$, $1\leq j\leq p$, $1\leq i\leq n$, $1\leq k\leq p'$, $1\leq\ell\leq n'$, \[\sum_{j=1}^{j=p}(d_{k,j},0)X_{j,i}=\sum_{\ell=1}^{\ell=n'}W_{k,l}(c_{\ell,i},0),\ \forall k,\ 1\leq k\leq p',\ \forall i,\ 1\leq i\leq n.\]
We put $\mathcal{E}=\mathcal{E}_1\cup\mathcal{E}_2$. As in $(i)$ and by using the above observation we show that $\mathcal{E}$ has a solution. We define $\phi:F\rightarrow M$ as in $(i)$, and by using the fact that $\mathcal{E}_2$ has a solution, we prove that $\phi$ is injective by using the above observation.
\end{proof}

\section{Hermite rings and elementary divisor rings} 
\label{S:arithmetic}
An $R$-module is called {\it uniserial} if if the set of its submodules is totally ordered by inclusion. A ring $R$ is a {\it valuation ring} if it is a uniserial $R$-module.
We say that $R$ is \textit{arithmetical} if $R_L$ is a valuation ring for each maximal ideal $L$.
A ring
is a {\it B\'ezout ring} if every finitely generated ideal is
principal. A ring $R$ is {\it Hermite} if $R$ 
satisfies the following property~: for every $(a,b) \in R^2$, there
exist $d, a', b'$ in $R$ such that $a = da'$, $b = db'$ and
$Ra' + Rb' = R$. We say that $R$ is an {\it elementary divisor ring} if for every matrix $A$,
with entries in $R$, there  exist
a diagonal matrix $D$ and invertible matrices $P$ and $Q$, with
entries in $R$, such that $PAQ = D$.  Then we have the following
implications:

\centerline{elementary divisor ring $\Rightarrow$ Hermite ring
$\Rightarrow$ B\'ezout ring $\Rightarrow$ arithmetical ring;}
but these implications are not reversible: see \cite{GiHen56} or \cite{Car87}.

\begin{theorem}
\label{T:Hermite} Let $R$ be a ring such that $\mathrm{pSpec}\ R$ is  totally disconnected. Assume that $R/A(x)$ is B\'ezout for each $x\in\mathrm{pSpec}\ R$. Then $R$ is Hermite.
\end{theorem}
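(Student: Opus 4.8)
The plan is to reduce the statement to the stalks $R/A(x)$ by means of Lemma~\ref{L:poly}, and then to prove that each such stalk, being an \emph{indecomposable} B\'ezout ring, is an Hermite ring.

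First, fix $a,b\in R$; we must produce $d,a',b',u,v\in R$ with $a=da'$, $b=db'$ and $ua'+vb'=1$. I would apply Lemma~\ref{L:poly} to the $R$-algebra $S=R$, the given elements $a,b$, and the three polynomials $f_1=DA'-a$, $f_2=DB'-b$, $f_3=UA'+VB'-1$ in the (commuting) indeterminates $D,A',B',U,V$. A common zero of $f_1,f_2,f_3$ in $R$ is precisely an Hermite factorization of $(a,b)$, so by Lemma~\ref{L:poly} it suffices to produce, for every $x\in\mathrm{pSpec}\ R$, elements $d,a',b',u,v$ with $da'-a$, $db'-b$ and $ua'+vb'-1$ all in $A(x)$; equivalently, to show that the images of $a,b$ in $T:=R/A(x)$ admit an Hermite factorization. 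Hence the whole theorem reduces to: \emph{each $T=R/A(x)$ is an Hermite ring.}

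Next I would analyse $T$. Since $\mathrm{pSpec}\ R$ is totally disconnected it is Hausdorff, so $\{x\}$ is closed and $A(x)$ is defined; as $A(x)$ is pure, $\mathrm{Spec}\ T=V(A(x))=\lambda_R^{\leftarrow}(\{x\})$ is a single $\mathcal R$-class of $\mathrm{Spec}\ R$ which is stable under specialisation and generisation (the latter by the argument in the proof of Lemma~\ref{L:pure}), so $\mathrm{pSpec}\ T$ is a single point and $T$ is indecomposable by Corollary~\ref{C:connected}. Being B\'ezout, $T$ is arithmetical, so each prime of $T$ contains a unique minimal prime, and Proposition~\ref{P:Hausd} then forces $\mathrm{Min}\ T$ to be a single point: $T$ has a unique minimal prime $L=\mathrm{Nil}\ T\subseteq J(T)$. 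Because $L$ is prime one obtains the crucial cancellation rule: if $cf=0$ and $f\notin L$, then $c\in L$ (indeed $(c)(f)\subseteq L$).

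Finally I would prove $T$ Hermite. Let $a,b\in T$ and write $Ta+Tb=Td$. If $d=0$ then $a=b=0$ and $d=0,\ a'=1,\ b'=0$ works. Otherwise $a=dm$, $b=dn$, and $d=d(\alpha m+\beta n)$ for suitable $\alpha,\beta$; set $e=\alpha m+\beta n\in Tm+Tn$. If $d\notin L$, then $1-e$ annihilates $d$, so $1-e\in L\subseteq J(T)$, $e$ is a unit, and $(d,m,n)$ is an Hermite factorization. If $d\in L$, let $N\ge 2$ be its nilpotency index ($d^{N-1}\ne0=d^{N}$); writing $Tm+Tn=Tf$, $e=fk$, one has $d=d(fk)=\cdots=df^{N-1}k^{N-1}$, whence $d^{N-1}(1-f^{N-1}k^{N-1})=0$; were $f$ nilpotent, $1-f^{N-1}k^{N-1}$ would be a unit and $d^{N-1}=0$, a contradiction, so $f\notin L$ and $\mathrm{Ann}(f)\subseteq L\subseteq J(T)$. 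Writing $m=fm_0$, $n=fn_0$ one gets $f(1-(\gamma m_0+\gamma'n_0))=0$ for suitable $\gamma,\gamma'$, hence $\gamma m_0+\gamma'n_0$ is a unit, $Tm_0+Tn_0=T$, and $a=(df)m_0$, $b=(df)n_0$ gives the desired factorization. The genuinely delicate point, and the step I expect to require the most care, is this last case $d\in L$: whereas $\mathrm{Ann}(d)$ need not be small, the annihilator of the generator $f$ of $Tm+Tn$ \emph{can} be forced into the Jacobson radical, and it is exactly this that makes the nilpotent case go through.
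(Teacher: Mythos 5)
Your proof is correct and follows essentially the same route as the paper: reduce to the stalks $R/A(x)$ via Lemma~\ref{L:poly} applied to the equations $a=DA'$, $b=DB'$, $1=UA'+VB'$, and show each stalk is a B\'ezout ring with a unique minimal prime, hence Hermite. The only differences are cosmetic: where the paper deduces the unique minimal prime of $R/A(x)$ from $R$ itself being B\'ezout (via Proposition~\ref{P:gen}) and then simply cites \cite[Theorem 2]{Hen55} for ``B\'ezout with a unique minimal prime $\Rightarrow$ Hermite,'' you obtain the unique minimal prime directly from $\mathrm{pSpec}\ (R/A(x))$ being a single point together with Proposition~\ref{P:Hausd}, and you supply a correct self-contained proof of Henriksen's result using the cancellation property of the prime nilradical.
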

\begin{proof} By Proposition~\ref{P:gen} $R$ is B\'ezout. It follows that each prime ideal contains a unique minimal prime, whence $R/A(x)$ has a unique minimal prime ideal. By \cite[Theorem 2]{Hen55}, a ring with a unique minimal ideal is Hermite if and only if it is B\'ezout. So, $R/A(x)$ is Hermite. Now, let $a,\ b\in R$. Consider the following polynomial equations: $a=XZ,\ b=YZ$ and $1=SX+TY$. For each $x\in\mathrm{pSpec}\ R$, these equations have a solution modulo $A(x)$. By Lemma~\ref{L:poly} they have a solution in $R$.  \end{proof}

\bigskip
Recall that a ring $R$ is  \textit{(semi)hereditary} if each (finitely generated) ideal is projective. If $F$ is a submodule of a module $E$ and $x$ an element of $E$, then the ideal $\{r\in R\mid rx\in F\}$ is denoted by $(F:x)$.

The following was already proved, see \cite[Theorem III.3]{Cou03} and \cite[Theorem 2.4]{LLS74}. 

\begin{corollary}
\label{C:semiher} Let $R$ be a B\'ezout ring.
Then the following assertions hold:
\begin{enumerate}
\item  $R$ is Hermite if $\mathrm{Min}\ R$ is compact;
\item $R$ is Hermite if it is semihereditary.
\end{enumerate}  
\end{corollary}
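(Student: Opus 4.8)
The plan is to deduce both assertions from Theorem~\ref{T:Hermite} by showing that, for a B\'ezout ring $R$, the hypotheses of that theorem are met under either extra assumption. So the first task is to understand $\mathrm{pSpec}\ R$ and the quotients $R/A(x)$ when $R$ is B\'ezout. Since $R$ is B\'ezout it is arithmetical, hence every prime ideal contains a unique minimal prime, so Proposition~\ref{P:Hausd} applies: $\mathrm{pSpec}\ R$ is Hausdorff, $\lambda_R\vert_{\mathrm{Min}\ R}$ is a continuous bijection, and it is a homeomorphism precisely when $\mathrm{Min}\ R$ is compact. In that compact case $\mathrm{pSpec}\ R\cong\mathrm{Min}\ R$ is a compact Hausdorff space; I would then argue it has a basis of clopen sets (equivalently that it is totally disconnected), using that the minimal prime spectrum of a commutative ring, when compact, is always Boolean (zero-dimensional compact Hausdorff) — this is a standard fact, or can be extracted from Lemma~\ref{L:pure} and Corollary~\ref{C:connected}. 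Concretely, for a closed point $x=\lambda_R(L)$ with $L$ minimal, $A(x)=0_L$, and compactness lets one separate distinct minimal primes by idempotents, so Proposition~\ref{P:discon} gives total disconnectedness.

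For part (i), once $\mathrm{pSpec}\ R$ is totally disconnected I must check that $R/A(x)$ is B\'ezout for each $x$. Here $A(x)$ is a pure ideal and $R/A(x)$ is a quotient of the B\'ezout ring $R$; since B\'ezout-ness passes to arbitrary quotients (a homomorphic image of a ring in which every finitely generated ideal is principal again has that property), $R/A(x)$ is B\'ezout. Then Theorem~\ref{T:Hermite} yields that $R$ is Hermite, which is exactly (i).

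For part (ii), suppose $R$ is B\'ezout and semihereditary. Semihereditary implies every finitely generated ideal is projective, and in particular the ring is reduced and has the property that $\mathrm{Min}\ R$ is compact — this is the key input: for a commutative semihereditary ring, $0_P=0$ is... more carefully, a semihereditary ring has a von Neumann regular total quotient ring, and the minimal prime spectrum of such a ring is compact (it is homeomorphic to $\mathrm{Spec}$ of the von Neumann regular quotient ring, which is compact). Alternatively I would invoke the standard result that a reduced ring whose total ring of quotients is von Neumann regular has compact $\mathrm{Min}\ R$, noting semihereditary $\Rightarrow$ this hypothesis. Having reduced (ii) to the compactness of $\mathrm{Min}\ R$, part (i) applies and $R$ is Hermite.

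The main obstacle I anticipate is the passage from ``$\mathrm{Min}\ R$ compact'' (or ``$R$ semihereditary'') to ``$\mathrm{pSpec}\ R$ is totally disconnected,'' i.e. verifying condition (ii) of Proposition~\ref{P:discon} that each $A(x)=0_L$ is generated by idempotents. This is where compactness does real work: given a minimal prime $L$ and $a\in 0_L$, one has $(0:a)\not\subseteq L$, and one needs to find an idempotent $e$ with $e\in 0_L$ and $a=ae$; producing this idempotent requires covering the compact space $\mathrm{Min}\ R$ (or $V(0:a)$) by finitely many basic clopen pieces and patching, exactly in the spirit of the proof of Lemma~\ref{L:pure}. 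Everything else — B\'ezout descending to quotients, Hermite characterization via Theorem~\ref{T:Hermite}, the semihereditary $\Rightarrow$ compact-$\mathrm{Min}$ implication — is either routine or citable (e.g.\ \cite[Theorem III.3]{Cou03}, \cite[Theorem 2.4]{LLS74}).
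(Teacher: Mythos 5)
Your proof is correct and follows essentially the same route as the paper: reduce to Theorem~\ref{T:Hermite} via Proposition~\ref{P:Hausd} (so that compactness of $\mathrm{Min}\ R$ makes $\lambda_R\vert_{\mathrm{Min}\ R}$ a homeomorphism), use the fact that $\mathrm{Min}\ R$ is totally disconnected and that quotients of B\'ezout rings are B\'ezout, and for (ii) invoke the standard result that semihereditary implies $\mathrm{Min}\ R$ compact (the paper cites Quentel). The ``obstacle'' you flag about $A(x)$ being generated by idempotents is already handled by the equivalence in Proposition~\ref{P:discon}, so no extra work is needed there.
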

\begin{proof} $(i)$.  Since each prime ideal contains a unique minimal prime, $\lambda_R\vert_{\mathrm{Min}\ R}$ is bijective. Moreover, by \cite[Proposition IV.1]{Cou07} $\mathrm{pSpec}\ R$ is Hausdorff.  It follows that $\lambda_R\vert_{\mathrm{Min}\ R}$ is a homeomorphism because $\mathrm{Min}\ R$ is compact. We can apply the previous theorem because  $\mathrm{Min}\ R$ is always totally disconnected (\cite[Corollary 2.4]{HeJe65}):\\ \centerline{$\forall a\in R,$ $D(a)\cap\mathrm{Min}\ R=V((N:a))\cap\mathrm{Min}\ R$ where $N$ is the nilradical of $R$.}

$(ii)$ is an immediate consequence of $(i)$ because $\mathrm{Min}\ R$ is compact if $R$ is semihereditary by \cite[Proposition 10]{Que71}.  \end{proof}

\medskip
The following example shows that $\mathrm{pSpec}\ R$ is not generally totally disconnected, even if $R$ is arithmetical.

\begin{example}
Consider \cite[Example 6.2 (due to Jensen)]{Vas76} defined in the following way: let $\mathcal{I}$ be a family of pairwise disjoint intervals of the real line with rational endpoints, such that between any two intervals of $\mathcal{I}$ there is at least another interval of $\mathcal{I}$; let $R$ be the ring of continuous maps $\mathbb{R}\to\mathbb{R}$ which are rational constant by interval except on finitely many intervals of $\mathcal{I}$ on which it is given by a rational polynomial. It is easy to check that $R$ is a reduced indecomposable  ring. It is also B\'ezout (left as an exercise!). Let $[a,b]\in\mathcal{I}$ et $f\in R$ defined by $f(x)=(x-a)(b-x)$ if $a\leq x\leq b$ and $f(x)=0$ elsewhere. Then $(0:f)$ is not finitely generated, whence $R$ is not semihereditary. So, $\mathrm{pSpec}\ R$ is an infinite set and a compact connected topological space.
\end{example}
\begin{theorem}
\label{T:EDR} Let $R$ be a ring such that $\mathrm{pSpec}\ R$ is totally disconnected. Assume that $R/A(x)$ is B\'ezout for each $x\in\mathrm{pSpec}\ R$. Then $R$ is an elementary divisor ring if and only if so is $R/L$, for each minimal prime ideal $L$.
\end{theorem}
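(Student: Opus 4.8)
The plan is to reduce the elementary divisor property to a matrix normal-form statement that is visible "locally on $\mathrm{pSpec}\ R$'' and then apply Lemma~\ref{L:poly}. First observe that by Proposition~\ref{P:gen}, $R$ is B\'ezout, so by Theorem~\ref{T:Hermite} it is already Hermite; thus each prime ideal contains a unique minimal prime, and $\mathrm{Min}\ R$ is in continuous bijection with $\mathrm{pSpec}\ R$ (Proposition~\ref{P:Hausd}). Moreover, for each $x\in\mathrm{pSpec}\ R$ the quotient $R/A(x)$ is B\'ezout with a unique minimal prime (its nilradical is prime), and the minimal primes $L$ of $R$ are exactly the preimages of the minimal primes of the various $R/A(x)$; since a B\'ezout ring modulo its nilradical is a B\'ezout domain, and $R/L$ is B\'ezout, the hypothesis "$R/L$ is an elementary divisor ring for each minimal prime $L$'' is equivalent to "$(R/A(x))/\mathrm{Nil}(R/A(x))$ is an elementary divisor ring for each $x$.'' Here I would invoke the classical fact (Gillman--Henriksen, cf.\ the discussion after Corollary~\ref{C:semiher}) that for a B\'ezout ring with a unique minimal prime the elementary divisor property passes between the ring and its reduction, so in fact each $R/A(x)$ is itself an elementary divisor ring.

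The direction "$R$ elementary divisor $\Rightarrow$ $R/L$ elementary divisor'' is immediate, since the class of elementary divisor rings is closed under homomorphic images (a diagonal reduction $PAQ=D$ of a matrix over $R$ maps to one over $R/L$). For the converse, the key is the Kaplansky--Gillman--Henriksen characterization: a B\'ezout (equivalently Hermite, in our case) ring $R$ is an elementary divisor ring if and only if for all $a,b,c\in R$ with $Ra+Rb+Rc=R$ there exist $p,q\in R$ with $R(pa)+R(pb+qc)=R$. So it suffices to verify this single first-order-type condition for $R$. Given $a,b,c$ with $Ra+Rb+Rc=R$, fix $x\in\mathrm{pSpec}\ R$; in $R/A(x)$ the images still generate the unit ideal, and since $R/A(x)$ is an elementary divisor ring there exist $\bar p,\bar q$ and witnesses $\bar s,\bar t,\bar u,\bar v$ of the two Bézout relations, i.e.\ elements of $R$ satisfying, modulo $A(x)$, the polynomial identities $sa+tb+uc=1$ (the hypothesis) together with $p,q,s',t'$ with $s'(pa)+t'(pb+qc)=1$. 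All of this is a finite system of polynomial equations with parameters $a,b,c$ (and the fixed coefficients of the given relation) over the commutative $R$-algebra $S=R$, solvable modulo $A(x)S$ for every $x\in\mathrm{pSpec}\ R$. By Lemma~\ref{L:poly} the system has a genuine solution in $R$, which yields $p,q\in R$ with $R(pa)+R(pb+qc)=R$. Hence $R$ satisfies the Gillman--Henriksen criterion and is an elementary divisor ring.

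The main obstacle I anticipate is bookkeeping around the equivalence of the hypothesis on $R/L$ with a usable statement about $R/A(x)$: one must be careful that the minimal primes of $R$ correspond correctly to the minimal primes of the stalks $R/A(x)$, and that "B\'ezout with a unique minimal prime + reduced quotient is an elementary divisor ring'' really does lift to the non-reduced local piece $R/A(x)$. This is exactly where Henriksen's theorem (\cite[Theorem 2]{Hen55}) and the Gillman--Henriksen diagonal-reduction criterion are needed, and where one should double-check that these classical results apply to rings that are merely B\'ezout (not domains) with a single minimal prime. Once that is in place, the core argument is a direct and essentially formal application of Lemma~\ref{L:poly} to the finite system of polynomial equations encoding the elementary-divisor criterion, exactly parallel to the proof of Theorem~\ref{T:Hermite}.
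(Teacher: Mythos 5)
Your proposal is correct and follows essentially the same route as the paper: first show each stalk $R/A(x)$ is an elementary divisor ring by lifting the property from its reduction (it is Hermite with unique minimal prime $L/A(x)$, and $R/L$ is an elementary divisor ring), then glue via the Gillman--Henriksen criterion encoded as a polynomial system and Lemma~\ref{L:poly}. The only cosmetic difference is that the paper makes your ``classical fact'' precise by passing through the Jacobson radical, noting $L/A(x)\subseteq\mathcal{J}(R/A(x))$ and invoking \cite[Theorem 3]{Hen55}.
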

\begin{proof} Only "if" requires a proof. By Theorem~\ref{T:Hermite} $R$ is Hermite. Let $x\in\mathrm{pSpec}\ R$ and $R'=R/A(x)$. Then $R'$ has a unique minimal prime ideal. Let $L$ be the minimal prime ideal of $R$ such that $L/A(x)$ is the minimal prime of $R'$. Thus $L/A(x)$ is contained in the Jacobson radical $\mathcal{J}(R')$ of $R'$. So, $R'/\mathcal{J}(R')$ is an elementary divisor ring since it is a homomorphic image of $R/L$. By \cite[Theorem 3]{Hen55} a Hermite ring $S$ is an elementary divisor ring if and only if so is $S/\mathcal{J}(S)$. Hence $R/A(x)$ is an elementary divisor ring. Let $a, b, c\in R$ such that $Ra+Rb+Rc=R$. We consider the polynomial equation in variables $X, Y, S, T$: $aSX+bTX+cTY=1$. By \cite[Theorem 6]{GiHe56}, this equation has a solution modulo $A(x)$,  $\forall x\in\mathrm{pSpec}\ R$. So, by Lemma~\ref{L:poly} there is a solution in $R$. We conclude by \cite[Theorem 6]{GiHe56}.  \end{proof}

\bigskip
With a similar proof as in Corollary~\ref{C:semiher}, we get Corollary~\ref{C:equiv}. The second condition shows that the two questions~\ref{Q:edr1} and \ref{Q:edr2} have the same answer.

\begin{corollary}
\label{C:equiv} 
The following assertions hold:
\begin{enumerate}
\item Let $R$ be a B\'ezout ring with compact minimal prime spectrum. Then $R$ is an elementary divisor ring if and only if so is $R/L$, for each minimal prime ideal $L$.
\item Let $R$ be a semihereditary ring. Then $R$ is an elementary divisor ring if and only if so is $R/L$, for each minimal prime ideal $L$.
\item Let $R$ be a hereditary ring. Then $R$ is an elementary divisor ring if and only if  $R/L$ is B\'ezout for each minimal prime ideal $L$.
\end{enumerate} 
\end{corollary}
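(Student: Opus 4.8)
The plan is to reduce all three statements to Theorem~\ref{T:EDR}. In each case the implication ``$R$ is an elementary divisor ring $\Rightarrow\dots$'' is immediate, since a homomorphic image of an elementary divisor ring is again one (lift a matrix, reduce its Smith form), and likewise a homomorphic image of a B\'ezout ring is B\'ezout; so I only have to prove the converses. By Theorem~\ref{T:EDR} it suffices, in each case, to check that (a) $\mathrm{pSpec}\ R$ is totally disconnected, (b) $R/A(x)$ is B\'ezout for every $x\in\mathrm{pSpec}\ R$, and (c) $R/L$ is an elementary divisor ring for every minimal prime $L$. For part (i) I would argue exactly as in the proof of Corollary~\ref{C:semiher}(i): a B\'ezout ring is arithmetical, so each prime contains a unique minimal prime and, by Proposition~\ref{P:Hausd}, $\lambda_R\vert_{\mathrm{Min}\ R}$ is a bijection, hence a homeomorphism since $\mathrm{Min}\ R$ is compact; as $\mathrm{Min}\ R$ is totally disconnected, (a) follows. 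Here (b) is automatic because $R/A(x)$ is a homomorphic image of the B\'ezout ring $R$, and (c) is the hypothesis; Theorem~\ref{T:EDR} then applies.

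For part (ii), the first point to recall is that a semihereditary ring is reduced and that its localization at every prime ideal is a valuation domain (a finitely generated ideal of the local ring $R_P$ is free, hence of rank $\le 1$ since $R_P^2$ does not embed in $R_P$, hence principal with a regular generator); in particular $R$ is arithmetical, and $\mathrm{Min}\ R$ is compact by \cite[Proposition~10]{Que71}, so (a) holds as in part (i). The crucial observation is then that, for each $x\in\mathrm{pSpec}\ R$, one has $A(x)=L_x$, where $L_x$ is the unique minimal prime contained in the primes of the class $x$: indeed $\lambda_R^{\leftarrow}(\{x\})=V(L_x)$ by Proposition~\ref{P:Hausd}, while by Lemma~\ref{L:pure} the ideal $A(x)$ is the intersection of the ideals $0_P$, which are radical because $R$ is reduced, so $A(x)$ is a radical ideal with $V(A(x))=\lambda_R^{\leftarrow}(\{x\})=V(L_x)$. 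Hence $R/A(x)=R/L_x$ is an elementary divisor ring by hypothesis, which secures both (b) and (c), and Theorem~\ref{T:EDR} gives the conclusion.

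For part (iii), a hereditary ring is semihereditary, so everything in part (ii) applies; in particular $\mathrm{pSpec}\ R$ is totally disconnected and each minimal prime $L$ equals $A(\lambda_R(L))$, hence is a pure ideal. The point I would exploit is that $R\to R/L$ is then a flat epimorphism (Proposition~\ref{P:purIdeal}) and that the hereditary property descends to $R/L$: if $L\subseteq I'$ is an ideal of $R$, then $I'$ is projective over $R$ and $(R/L)\otimes_R I'\cong I'/LI'=I'/L$ (using $LI'=L$, which holds since $L$ is pure and $L\subseteq I'$), so every ideal of $R/L$ is projective over $R/L$. Thus $R/L$ is a hereditary domain, i.e.\ a Dedekind domain; being B\'ezout by hypothesis, it is a principal ideal domain, hence an elementary divisor ring. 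This gives (c), and (b) as well since a principal ideal domain is B\'ezout, so Theorem~\ref{T:EDR} --- equivalently, part (ii) --- finishes the proof.

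The step I expect to be the main obstacle is the identification $A(x)=L_x$ in part (ii), together with its use in part (iii). One must be sure that for a semihereditary (hence reduced and arithmetical) ring the pure ideals $A(x)$ are precisely the minimal primes, so that the hypotheses imposed on the rings $R/L$ translate into the hypotheses on the rings $R/A(x)$ required by Theorem~\ref{T:EDR}; and in part (iii) one must recognise that heredity is inherited along the flat epimorphism $R\to R/L$, which is exactly what upgrades ``$R/L$ is B\'ezout'' to ``$R/L$ is a principal ideal domain'', and hence to ``$R/L$ is an elementary divisor ring''. Verifying that $R_P$ is a valuation domain for a semihereditary $R$, and that these descent properties hold, is where the real content lies; once they are in place, all three parts follow formally from Theorem~\ref{T:EDR}.
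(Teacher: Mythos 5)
Your proof is correct and follows the paper's intended route: the paper disposes of this corollary in one line (``with a similar proof as in Corollary~\ref{C:semiher}''), which is exactly the reduction to Theorem~\ref{T:EDR} via compactness of $\mathrm{Min}\ R$ and total disconnectedness of $\mathrm{pSpec}\ R$ that you carry out, and your identification $A(x)=L_x$ for a reduced ring in which every prime contains a unique minimal prime is precisely the detail the paper leaves implicit but needs for (ii). The only divergence is in part (iii), where the paper alternatively cites \cite{Sho74}, whereas you give a self-contained and correct argument: heredity descends along $R\to R/L$ because $L=A(\lambda_R(L))$ is pure (so $LI'=L\cap I'=L$ for any ideal $I'\supseteq L$), making $R/L$ a B\'ezout Dedekind domain, hence a PID and an elementary divisor ring.
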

\smallskip

The third assertion can be also deduced from \cite[Corollary]{Sho74}.

\section{Almost clean rings}
\label{S:clean}
In \cite[Proposition 15]{McG03}, McGovern proved that each element of a ring $R$ is the product of an idempotent with a regular element if and only if $R$ is a \textit{PP-ring}, i.e. each principal ideal is projective, and he showed that each PP-ring is almost clean (\cite[Proposition 16]{McG03}). The aim of this section is to study almost clean rings.

In the sequel, if $R$ is a ring, $\mathfrak{R}(R)$ is its set of regular elements of $R$ and \[\Phi_R=\{L\in\mathrm{Spec}\ R\mid L\cap\mathfrak{R}(R)=\emptyset\}.\] By \cite[Corollaire 2 p.92]{Bou61} each zero-divisor is contained in an element of $\Phi_R$. So, the following proposition is obvious.

\begin{proposition} \label{P:loalclean} Let $R$ be a ring. The following conditions are equivalent:
\begin{enumerate}
\item For each $a\in R$, either $a$ or $(a-1)$ is regular.
\item $R$ is almost clean and indecomposable.
\item $\forall L,L'\in\Phi_R,\ L+L'\ne R$.
\end{enumerate}
\end{proposition}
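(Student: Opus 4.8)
The plan is to prove the equivalence of the three conditions by a cycle of implications, using the cited fact from \cite{Bou61} that every zero-divisor of $R$ lies in some member of $\Phi_R$, together with the basic observation that $\Phi_R$ consists exactly of the prime ideals that are minimal over $\mathrm{Ann}(a)$ for various zero-divisors $a$, hence that $a\in R$ is regular if and only if $a$ lies in no member of $\Phi_R$.

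\medskip\noindent\textbf{Proof.}
$(i)\Rightarrow(iii)$. Suppose $L,L'\in\Phi_R$ with $L+L'=R$. Write $1=\ell+\ell'$ with $\ell\in L$, $\ell'\in L'$. By $(i)$, either $\ell$ or $\ell-1=-\ell'$ is regular. But $\ell\in L$ and $L\cap\mathfrak{R}(R)=\emptyset$, so $\ell$ is not regular; likewise $\ell'\in L'$ is not regular, so $-\ell'$ is not regular. This contradiction shows $L+L'\ne R$.

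\medskip\noindent
$(iii)\Rightarrow(i)$. Let $a\in R$ and suppose neither $a$ nor $a-1$ is regular. By \cite[Corollaire 2 p.92]{Bou61} there exist $L,L'\in\Phi_R$ with $a\in L$ and $a-1\in L'$. Then $1=(-(a-1))+a\in L'+L$, so $L+L'=R$, contradicting $(iii)$. Hence $a$ or $a-1$ is regular.

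\medskip\noindent
$(i)\Rightarrow(ii)$. If $a\in R$ and $a$ is regular, then $a=a+0$ is a sum of a regular element and an idempotent; if $a-1$ is regular, then $a=(a-1)+1$ is such a sum. So $R$ is almost clean. If $e\in R$ is a nontrivial idempotent, then both $e$ and $e-1=-(1-e)$ are zero-divisors (as $e(1-e)=0$ with $e,1-e\ne 0$), contradicting $(i)$; hence $R$ is indecomposable.

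\medskip\noindent
$(ii)\Rightarrow(i)$. Let $a\in R$. Since $R$ is almost clean, $a=r+e$ with $r$ regular and $e$ idempotent. As $R$ is indecomposable, $e=0$ or $e=1$, so $a=r$ is regular or $a-1=r$ is regular. $\qed$

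\medskip
I do not anticipate a serious obstacle here: once the characterization of regular elements via $\Phi_R$ and the covering property from \cite{Bou61} are in hand, each implication is a one-line manipulation. The only point requiring a little care is $(i)\Rightarrow(ii)$, where one must remember to separately rule out nontrivial idempotents in order to get indecomposability rather than merely the almost clean property; this is immediate from the fact that $e$ and $1-e$ are zero-divisors whenever $e$ is a nontrivial idempotent. Accordingly I would present the argument as the four short implications $(i)\Rightarrow(iii)\Rightarrow(i)$ and $(i)\Rightarrow(ii)\Rightarrow(i)$ rather than a single cycle, since condition $(i)$ is the natural hub.
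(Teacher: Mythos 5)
Your proof is correct and follows exactly the route the paper intends: the paper merely cites the Bourbaki fact that every zero-divisor lies in some member of $\Phi_R$ and declares the proposition obvious, and your four implications are the natural fleshing-out of that remark. (The only quibble is the aside in your plan identifying $\Phi_R$ with primes minimal over annihilators, which is neither needed nor quite accurate in general; the proof itself never uses it.)
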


\begin{corollary} \label{C:fracValu}
Let $R$ be an arithmetical ring, $Q$ its ring of fractions and $N$ its nilradical. Then:
\begin{enumerate}
\item $R$ is almost clean and indecomposable if and only if $Q$ is a valuation ring;
\item $R/A$ is almost clean and indecomposable for each ideal $A\subseteq N$ if and only if $N$ is prime and uniserial.
\end{enumerate} 
\end{corollary}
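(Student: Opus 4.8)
The plan is to deduce both parts from Proposition~\ref{P:loalclean}, using only two standard facts about an arithmetical ring $R$: (a) any localization of $R$ is arithmetical, and an arithmetical ring is a valuation ring precisely when it is local, equivalently when its prime spectrum is totally ordered by inclusion; (b) two prime ideals $L,L'$ satisfy $L+L'\ne R$ if and only if they lie in a common maximal ideal $P$, in which case, $R_P$ being a valuation ring, $L$ and $L'$ are comparable.

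\emph{Part (1).} Since $Q$ is a localization of $R$ it is arithmetical, so $Q$ is a valuation ring iff $\mathrm{Spec}\ Q$ is a chain, and localization at $\mathfrak{R}(R)$ identifies $\mathrm{Spec}\ Q$ with $\Phi_R$ as ordered sets. If $Q$ is a valuation ring then $\Phi_R$ is a chain, hence $L+L'\ne R$ for all $L,L'\in\Phi_R$, and $R$ is almost clean and indecomposable by Proposition~\ref{P:loalclean}. Conversely, if $R$ is almost clean and indecomposable but $Q$ has two distinct maximal ideals, these correspond to distinct maximal elements $L_1,L_2$ of $\Phi_R$; Proposition~\ref{P:loalclean} gives $L_1+L_2\ne R$, so by (b) $L_1$ and $L_2$ are comparable, contradicting distinctness and maximality. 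Hence $Q$ is local, i.e. a valuation ring.

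\emph{Part (2), ``if''.} Assume $N$ is prime and uniserial and fix $A\subseteq N$. Then $N/A$ is the nilradical of $R/A$; it is prime because $(R/A)/(N/A)\cong R/N$ is a domain, and it is uniserial over $R/A$ (a quotient of the uniserial module $N$, on which $A$ acts trivially). I verify the criterion of Proposition~\ref{P:loalclean}(i): suppose $\bar a$ and $\bar a-1$ are both non-regular in $R/A$. If $\bar a\in N/A$ then $1-\bar a$, hence $\bar a-1$, is a unit, which is absurd; likewise $\bar a-1\notin N/A$. Choose nonzero $\bar u$ with $\bar a\bar u=0$ and nonzero $\bar v$ with $\bar a\bar v=\bar v$; as $N/A$ is prime, $\bar u,\bar v\in N/A$, and as $N/A$ is uniserial, $(R/A)\bar u$ and $(R/A)\bar v$ are comparable. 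If $\bar u=\bar t\bar v$ then $0=\bar a\bar u=\bar t\,\bar a\bar v=\bar t\bar v=\bar u$; if $\bar v=\bar t\bar u$ then $\bar v=\bar a\bar v=\bar t\,\bar a\bar u=0$. Either outcome is a contradiction, so $R/A$ is almost clean and indecomposable.

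\emph{Part (2), ``only if'', and the main obstacle.} Taking $A=N$: the reduced arithmetical ring $R/N$ is almost clean and indecomposable, so any two of its minimal primes — which consist of zero-divisors, hence lie in $\Phi_{R/N}$ — have proper sum by Proposition~\ref{P:loalclean}, hence lie in a common maximal ideal, hence are comparable, hence equal; so $N$ is prime. For uniseriality, suppose $x,y\in N$ with $Rx\not\subseteq Ry$ and $Ry\not\subseteq Rx$; put $A=Rx\cap Ry\subseteq N$ and $T=R/A$, so $\bar x,\bar y$ are nonzero and $T\bar x\cap T\bar y=0$. For each maximal ideal $\mathfrak n$ of $T$, the comparable ideals $(T\bar x)_{\mathfrak n}$, $(T\bar y)_{\mathfrak n}$ of the valuation ring $T_{\mathfrak n}$ have zero intersection, so one of them vanishes; equivalently $(0:\bar x)\not\subseteq\mathfrak n$ or $(0:\bar y)\not\subseteq\mathfrak n$. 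As $\mathfrak n$ is arbitrary, $(0:\bar x)+(0:\bar y)=T$; choosing $\alpha\in(0:\bar x)$, $\beta\in(0:\bar y)$ with $\alpha+\beta=1$ makes both $\alpha$ and $\alpha-1=-\beta$ non-regular in $T$, contradicting Proposition~\ref{P:loalclean}(i) for $T$. Hence $N$ is uniserial. The standard facts above cause no trouble; the real content is the identification $\mathrm{Spec}\ Q\cong\Phi_R$ used in (1) and, above all, the choice of the test ideal $A=Rx\cap Ry$, which converts a failure of uniseriality of $N$ into a failure of the criterion of Proposition~\ref{P:loalclean}(i); I expect this last step to be the one most easily gotten wrong.
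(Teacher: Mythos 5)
Your proof is correct, and part (1) follows the paper's own argument (the order isomorphism $\mathrm{Spec}\ Q\cong\Phi_R$ plus Proposition~\ref{P:loalclean}) essentially verbatim. Part (2) is where you genuinely diverge, in both directions. For the ``if'' direction the paper simply quotes V\'amos to conclude that $Q(R/A)$ is a valuation ring when $N$ is prime and uniserial, and then invokes part (1); you instead verify condition (i) of Proposition~\ref{P:loalclean} directly: if $\bar a$ and $\bar a-1$ are both zero-divisors of $R/A$, neither lies in the prime nilradical $N/A$, so their annihilating elements $\bar u,\bar v$ lie in $N/A$, and comparability of $(R/A)\bar u$ and $(R/A)\bar v$ forces one of them to vanish. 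This is self-contained and replaces an external citation by a two-line computation. For the ``only if'' direction the paper reduces to the case $Ra\cap Rb=0$ and then further quotients by maximal submodules of $Ra$ and $Rb$ to make them simple with distinct annihilating maximal ideals $L,P$, producing $c\in L$ with $1-c\in P$, both zero-divisors; you avoid the passage to simple modules (and the existence of maximal submodules) by working in $T=R/(Rx\cap Ry)$ and showing $(0:\bar x)+(0:\bar y)=T$ via localization at every maximal ideal of the arithmetical ring $T$, then taking $\alpha+\beta=1$. Both routes manufacture an element $\alpha$ with $\alpha$ and $\alpha-1$ zero-divisors; yours is arguably cleaner since the test ideal $Rx\cap Ry$ is produced in one step and the local-global argument is routine, while the paper's buys a very concrete picture (two simple modules with distinct annihilators). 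One cosmetic nit: your parenthetical claim that $A$ ``acts trivially'' on $N/A$ is not needed and not true in general; uniseriality of $N/A$ over $R/A$ follows simply because its $R/A$-submodules are its $R$-submodules, which form a chain as $N/A$ is a quotient of the uniserial $R$-module $N$.
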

\begin{proof} $(i)$. Assume that $R$ is almost clean and indecomposable. Then if $L,L'\in\Phi_R$ then there exists a maximal ideal $P$ such that $L+L'\subseteq P$. Since $R_P$ is a valuation ring, either $L\subseteq L'$ or $L'\subseteq L$. By \cite[Corollaire p.129]{Bou61} $\Phi_R$ is homeomorphic to $\mathrm{Spec}\ Q$. It follows that $Q$ is local. Conversely, $\Phi_R$ contains a unique maximal element. 
 
$(ii)$. First, assume that $R/A$ is almost clean and indecomposable for each ideal $A\subseteq N$. Since $Q$ is a valuation ring then $N$ is prime. By way of contradiction suppose $\exists a, b\in N$ such that neither divides the other. We may assume that $Ra\cap Rb=0$. Let $A$ and $B$ be  maximal submodules of $Ra$ and $Rb$ respectively. We may replace $R$ by $R/(A+B)$ and assume that $Ra$ and $Rb$ are simple modules. Let $L$ and $P$ be their respective annihilators. Since $R_L$ is a valuation ring and $R_La\ne 0$, we have $R_Lb=0$. So, $L\ne P$. It follows that $\exists c\in L$ such that $(1-c)\in P$. Neither $c$ nor $(1-c)$ is regular. This contradicts that $R$ is almost clean.

Conversely, suppose that $N$ is prime and uniserial. Then, if $A$ is an ideal contained in $N$, $N/A$ is also uniseriel. So,  the ring of fractions of $R/A$ is a valuation ring: see \cite[p.218, between the definition of a torch ring and Theorem B]{Vam77}. \end{proof}

\bigskip
Following V\' amos \cite{Vam77},  we say that $R$ is a {\it torch ring} if
the following conditions are satisfied~:
\begin{enumerate}
\item $R$ is an arithmetical ring with at least two maximal ideals;
\item $R$ has a unique minimal prime ideal $N$ which is a
nonzero uniserial module.
\end{enumerate}

We follow T.S. Shores and R. Wiegand \cite{ShWi74}, by defining a {\it canonical form} for
an $R$-module $E$ to be a decomposition $E\cong R/I_1\oplus
R/I_2\oplus\dots\oplus R/I_n,$ where $I_1\subseteq
I_2\subseteq\dots\subseteq I_n\not= R,$ and by calling a ring $R$ a
{\it CF-ring} if every direct sum of finitely many cyclic modules has a
canonical form.

\begin{corollary}
\label{C:CF} Each CF-ring is almost clean.
\end{corollary}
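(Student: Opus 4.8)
The plan is to reduce to the indecomposable case and then read off almost cleanness from Corollary~\ref{C:fracValu} together with the structure theory of CF-rings. The first ingredient is the elementary remark that an arbitrary direct product $\prod_i R_i$ of almost clean rings is almost clean: given $a=(a_i)_i$, write $a_i=e_i+u_i$ with $e_i$ idempotent and $u_i$ regular in $R_i$; then $(e_i)_i$ is idempotent and $(u_i)_i$ is regular, since an element of a product is a zero-divisor as soon as one of its components is. The second ingredient is that a homomorphic image of a CF-ring is again a CF-ring (a direct sum of finitely many cyclic $R/I$-modules is also such a module over $R$, hence has a canonical form, and this canonical form is killed by $I$). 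Since, by the Shores--Wiegand structure theorem (\cite{ShWi74}), a CF-ring is a finite direct product of indecomposable CF-rings, it therefore suffices to prove that every indecomposable CF-ring is almost clean.

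So let $R$ be an indecomposable CF-ring. By \cite{ShWi74}, $R$ is a valuation ring, a B\'ezout domain, or a torch ring, and in each case I would verify the hypotheses of Corollary~\ref{C:fracValu}. In all three cases $R$ is arithmetical: a B\'ezout ring is arithmetical, a torch ring is arithmetical by definition, and a valuation ring is local and hence trivially arithmetical. Moreover the nilradical $N$ of $R$ is a prime uniserial submodule of $R$: in a valuation ring the prime ideals form a chain, so $N$ is the unique minimal prime, and $N$, being an ideal of the uniserial module $R$, is itself uniserial; in a B\'ezout domain $N=0$; in a torch ring this is precisely the second defining condition. Hence Corollary~\ref{C:fracValu}(ii), applied with $A=0$, gives that $R=R/0$ is almost clean (and indecomposable). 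Alternatively, one may avoid (ii) in two of the three cases: a valuation ring is local, hence clean, hence almost clean, and every domain (in particular a B\'ezout domain) is almost clean, so that only the torch case genuinely requires part (ii).

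The argument is therefore essentially a translation once the shape of a CF-ring is known. The only real input is the Shores--Wiegand classification of (indecomposable) CF-rings; granting that, the main obstacle is merely the routine case distinction, i.e.\ checking that an indecomposable CF-ring is arithmetical with prime uniserial nilradical so that Corollary~\ref{C:fracValu} applies.
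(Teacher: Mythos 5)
Your proof is correct and follows essentially the same route as the paper: invoke the Shores--Wiegand structure theorem to reduce to a finite product of indecomposable CF-rings (each a domain, a local/valuation ring, or a torch ring), and then apply Corollary~\ref{C:fracValu}. The extra details you supply (a finite product of almost clean rings is almost clean, and the case-by-case verification that the nilradical is prime and uniserial) are exactly the steps the paper leaves implicit.
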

\begin{proof}
By \cite[Theorem 3.12]{ShWi74} every CF-ring is
arithmetical and a finite product of indecomposable CF-rings. If $R$ is
indecomposable then $R$ is either a domain, or a local ring, or a torch ring.  By Corollary~\ref{C:fracValu} $R$ is almost clean.  \end{proof}

\medskip

By Proposition~\ref{P:stalk} there is some similarity between \cite[Theorem 2.4]{BuRap08} and the following theorem.

\begin{theorem} \label{T:hausd} 
Let $R$ be a ring. Consider the following conditions:
\begin{enumerate}
\item $R$ is almost clean;
\item $\mathrm{pSpec}\ R$ is totally disconnected 
and $\forall r\in R$,  $\forall x\in \mathrm{pSpec}\ R$, $\exists s_x\in\mathfrak{R}(R)$ such that either $r\equiv s_x$ modulo $A(x)$ or $(r-1)\equiv s_x$ modulo $A(x)$;
\item $\mathrm{pSpec}\ R$ is totally disconnected 
and for each $x\in \mathrm{pSpec}\ R$, $R/A(x)$ is almost clean. 
\end{enumerate} 
Then $(i)\Leftrightarrow(ii)\Rightarrow (iii)$ and the three conditions are equivalent if every principal ideal of $R$ is finitely presented.
\end{theorem}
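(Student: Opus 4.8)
The plan is to establish $(ii)\Rightarrow(iii)$, $(ii)\Rightarrow(i)$, $(i)\Rightarrow(ii)$, and finally $(iii)\Rightarrow(ii)$ under the supplementary hypothesis. The single recurring tool will be that a pure ideal $A$ of $R$ makes $R/A$ a flat $R$-module (Proposition~\ref{P:purIdeal}); consequently the image in $R/A$ of a regular element of $R$ is again regular, while the image of an idempotent is an idempotent. Whenever $\mathrm{pSpec}\ R$ is totally disconnected I will also use freely that $A(x)$ is generated by idempotents and that $R/A(x)$ is the stalk of $R$ at $\tau_R(x)$ (Propositions~\ref{P:discon} and~\ref{P:stalk}), hence is indecomposable.

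For $(ii)\Rightarrow(iii)$ total disconnectedness is common to both, and for the remaining part I would lift $\bar r\in R/A(x)$ to $r\in R$, apply $(ii)$ to get $s_x\in\mathfrak{R}(R)$ and $\varepsilon\in\{0,1\}$ with $r-\varepsilon\equiv s_x$ modulo $A(x)$, and note that $\bar r=\bar\varepsilon+\overline{s_x}$ then exhibits $\bar r$ as a sum of an idempotent and a regular element of $R/A(x)$. For $(ii)\Rightarrow(i)$, fix $r\in R$; for each $x$ write $r-\varepsilon_x-s_x\in A(x)$ as above, and since $A(x)$ is generated by idempotents this element lies in $Re_x$ for some idempotent $e_x\in A(x)$, so $(1-e_x)(r-\varepsilon_x-s_x)=0$ and $x\in\lambda_R^{\rightarrow}(D(1-e_x))$, which is clopen in $\mathrm{pSpec}\ R$ by Corollary~\ref{C:connected}. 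Quasi-compactness yields a finite subcover; after replacing the idempotents $1-e_{x_j}$ by an orthogonal family $f_1,\dots,f_p$ with $\sum_j f_j=1$ and $Rf_j\subseteq R(1-e_{x_j})$, I obtain $r=\sum_j f_j r=\sum_j f_j(\varepsilon_{x_j}+s_{x_j})=e+s$ with $e=\sum_{\varepsilon_{x_j}=1}f_j$ idempotent and $s=\sum_j f_j s_{x_j}$; via $R\cong\prod_j Rf_j$ each component $f_j s_{x_j}$ is regular in $Rf_j$ (flatness again), so $s$ is regular.

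For $(i)\Rightarrow(ii)$ the crux is that almost cleanness forces $\mathrm{pSpec}\ R$ to be totally disconnected; granting this, $(ii)$ is quick: writing $r=e_0+s_0$ with $e_0$ idempotent and $s_0$ regular, an $\mathcal{R}$-class $x$ has all its members containing $e_0$ or all containing $1-e_0$ (comparable primes contain the same idempotents), whence $e_0\in A(x)$ so $r\equiv s_0$, or $1-e_0\in A(x)$ so $r-1\equiv s_0$, modulo $A(x)$. To get total disconnectedness I would show that $\tau_R$ is injective, hence a homeomorphism since its source is compact and its target Hausdorff, so that Proposition~\ref{P:stalk} applies. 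If $L,L'$ are primes containing the same idempotents, let $I$ be the ideal of $R$ generated by the idempotents lying in $L$ (equivalently in $L'$); it is pure, so $R/I$ is flat over $R$, hence almost clean, and it is indecomposable. For an indecomposable almost clean ring $S$, Proposition~\ref{P:loalclean} gives $L_1+L_2\ne S$ for all $L_1,L_2\in\Phi_S$, so any two members of $\Phi_S$ are $\mathcal{R}$-equivalent; since $\mathrm{Min}\ S\subseteq\Phi_S$ (an element of a minimal prime $P_0$ becomes nilpotent in $S_{P_0}$, hence is a zero-divisor in $S$) and every prime contains a minimal one, all primes of $S$ are $\mathcal{R}$-equivalent, that is, $\mathrm{pSpec}\ S$ is a single point. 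Applying this to $S=R/I$ and lifting a connecting chain to $R$ gives $L\,\mathcal{R}\,L'$.

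Finally, for $(iii)\Rightarrow(ii)$ assuming every principal ideal of $R$ is finitely presented: total disconnectedness is given, and $R/A(x)$ is the indecomposable almost clean stalk, so by Proposition~\ref{P:loalclean} either $\bar r$ or $\bar r-1$ is regular in $R/A(x)$, say $\bar r$. Then $(0:r)\subseteq(A(x):r)=A(x)$, and since $(0:r)$ is finitely generated while $A(x)$ is generated by idempotents, $(0:r)\subseteq R\epsilon$ for some idempotent $\epsilon\in A(x)$, whence $(1-\epsilon)(0:r)=0$; one then checks that $s_x=(1-\epsilon)r+\epsilon$ is regular in $R$ and that $s_x-r=\epsilon(1-r)\in A(x)$, the case of $\bar r-1$ being symmetric. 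I expect the main obstacle to be the step $(i)\Rightarrow$ ``$\mathrm{pSpec}\ R$ totally disconnected'': organizing the reduction to the indecomposable case through the pure ideal generated by a family of idempotents, and verifying that an indecomposable almost clean ring has a one-point $\mathrm{pSpec}$ by combining Proposition~\ref{P:loalclean} with the fact that minimal primes consist of zero-divisors.
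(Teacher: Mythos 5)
Your proof is correct, but in the two substantive implications it takes a genuinely different route from the paper. For $(i)\Rightarrow$ ``$\mathrm{pSpec}\ R$ totally disconnected'' the paper argues directly: given distinct classes $x,y$ with minimal primes $P\in x$, $P'\in y$, it writes $a+a'=1$ with $a\in P$, $a'\in P'$, decomposes $a=s+e$, and uses that $s$ is regular (hence outside the minimal prime $P$) to produce a single idempotent $e$ with $x\subseteq D(e)$, $y\subseteq D(1-e)$; you instead prove injectivity of $\tau_R$ by showing each Pierce stalk $R/I$ is almost clean (via purity of $I$ and preservation of regular elements under flat quotients) and indecomposable, so that Proposition~\ref{P:loalclean} plus ``minimal primes consist of zero-divisors'' collapses its $\mathrm{pSpec}$ to a point. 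Your route is longer but isolates a reusable fact (one-point $\mathrm{pSpec}$ for indecomposable almost clean rings); note that the indecomposability of the stalk $R/I$ is not proved anywhere in the paper and deserves a line (lift an idempotent of $R/I$ through some $R/Re$, $e\in I$ idempotent, using $\mathrm{B}(R)\cap I=\mathrm{B}(R)\cap L$; alternatively, for $R/A(x)$ one can invoke Corollary~\ref{C:connected} since $V(A(x))$ is a single $\mathcal{R}$-class). For $(ii)\Rightarrow(i)$ the paper feeds the equations $E^2=E$, $E+X=(a,0)$, $XY=(0,1)$ in the trivial extension $R\ltimes Q$ into Lemma~\ref{L:poly}, encoding regularity as invertibility in $Q$; you instead unfold the same partition-of-unity-by-orthogonal-idempotents mechanism by hand and verify regularity of the glued element through $R\cong\prod_j Rf_j$, which avoids the trivial-extension gadget entirely and is arguably more transparent. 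The implications $(ii)\Rightarrow(iii)$ and $(iii)\Rightarrow(ii)$ (under finite presentation of principal ideals, via $(0:r)\subseteq R\epsilon$ and the regular element $(1-\epsilon)r+\epsilon$) coincide with the paper's argument.
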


\begin{proof} $(i)\Rightarrow (ii)$. Let $x$ and $y$ be two distinct points of $\mathrm{pSpec}\ R$. Let $P$ and $P'$ be two minimal prime ideals of $R$ such that $P\in x$ and $P'\in y$. There is no maximal ideal containing $P$ and $P'$. So, $P+P'=R$ and there exist $a\in P$ and $a'\in P'$ such that $a+a'=1$. We have $a=s+e$ where $s$ is  regular and $e$  idempotent. Since $s\notin P$ we get that $e\notin P$. It follows that for each $L\in x$, $(1-e)\in L$ and $e\notin L$. So, $x\subseteq D(e)$. We have $a'=-s+(1-e)$. In the same way we get $y\subseteq D(1-e)$. Therefore $x$ and $y$ belong to disjoint clopen neighbourhoods. Hence $\mathrm{pSpec}\ R$ is  totally disconnected. Now, let $r\in R$ and  $z\in\mathrm{pSpec}\ R$. We have $r=s+e$ where $s$ is  regular and $e$  idempotent. If $z\subseteq V(e)$ then $e\in A(z)$ and $r\equiv s$ modulo $A(z)$; and, if $z\subseteq V(1-e)$ then $(1-e)\in A(z)$ and $(r-1)\equiv s$ modulo $A(z)$.

$(ii)\Rightarrow (i)$. Let $a\in R$. Let $Q$ be the ring of fractions of $R$ and let $S=R\ltimes Q$ be the trivial extension of $R$ by $Q$. We consider the following polynomial equations in $S$: $E^2=E, E+X=(a,0)$ and $XY=(0,1)$. Let $x\in\mathrm{pSpec}\ R$. If $a\equiv s_x$ modulo $A(x)$ where $s_x$ is a regular element of $R$, then $E=(0,0),\ X=(s_x,0),\ Y=(0,1/s_x)$ is a solution of these polynomial equations modulo $A(x)S$; if $(a-1)\equiv s_x$ modulo $A(x)$, we take $E=(1,0)$. So, by Lemma~\ref{L:poly}, these equations have a solution in $S$: $E=(e,q),\ X=(s,u),\ Y=(t,v)$. From $E^2=E$ we deduce that $e^2=e$ and $(2e-1)q=0$. So, $q=0$ since $(2e-1)$ is a unit. From $E+X=(a,0)$ we deduce that $u=0$, and from $XY=(0,1)$ we deduce that $sv=1$. Hence $s$ is a regular element of $R$ and $a=e+s$. We conclude that $R$ is almost clean.

$(ii)\Rightarrow (iii).$  Clearly, if $r\in R$ then, $\forall x\in\mathrm{pSpec}\ R$ either $r$ or $(r-1)$ is regular modulo $A(x)$. So, $R/A(x)$ is almost clean  $\forall x\in\mathrm{pSpec}\ R$.

$(iii)\Rightarrow (ii).$ We assume that each principal ideal is finitely presented. Let $x\in\mathrm{pSpec}\ R$. It remains to show that any regular element $a$ modulo $A(x)$ is congruent to a regular element of $R$ modulo $A(x)$. Since $(0:a)$ is finitely generated and $A(x)$ is generated by idempotents, there exists an idempotent $e\in A(x)$ such that $(0:a)\subseteq Re$. Now, it is easy to check that $a(1-e)+e$ is a regular element. 
   \end{proof}

The following examples show that the conditions $(i)$ and $(iii)$ are not generally equivalent.
\begin{example}
\label{E:BurRap} \cite[Examples 2.2 and 2.9.(i)]{BuRap08} are non-almost clean arithmetical rings (the second is reduced) with almost clean stalks. These are defined in the following way: let $D$ be a principal ideal domain and $\forall n\in\mathbb{N}$, let $R_n$ be a  quotient of $D$ by a non-zero proper ideal $I_n$; let $R$ be the set of elements $r=(r_n)_{n\in\mathbb{N}}$ of $\Pi_{n\in\mathbb{N}}R_n$ which satisfy $\exists m_r\in\mathbb{N}$ and $\exists d_r\in D$ such that $\forall n\geq m_r$, $r_n=d_r+I_n$. We put $e_m=(\delta_{m,n})_{n\in\mathbb{N}},\ \forall m\in\mathbb{N}$. It is easy to check that the points of $\mathrm{pSpec}\ R$ are: $x_{\infty}=V(\bigoplus_{n\in\mathbb{N}}R_n)$ and $\forall n\in\mathbb{N}$, $x_n=V(1-e_n)$. Since $x_{\infty}\subseteq D(1-e_m)$ and $x_m\subseteq D(e_m)$, $\forall m\in\mathbb{N}$, $\mathrm{pSpec}\ R$ is totally disconnected. So, (by using Proposition~\ref{P:stalk}) these examples satisfy condition $(iii)$ of Theorem~\ref{T:hausd}.
\end{example}

The following example shows that the condition "each principal ideal is finitely presented" is not necessary if $R$ is almost clean.
\begin{example}
\label{E:vas} We consider \cite[Example 1.3b]{Vas76}. Let $R=\mathbb{Z}\oplus S$ where $S=(\mathbb{Z}/2\mathbb{Z})^{\mathbb{(N)}}$. The multiplication is defined by $(m,x)(n,y)=(mn,nx+my+xy)$, where $m,n\in\mathbb{Z}$ and $x,y\in S$. It is known that $R$ is a reduced arithmetical ring which is not semihereditary. Let $p\in\mathbb{Z}$ and $s\in S$. Then $(2p-1,s)=(2p-1,0)+(0,s)$ and  $(2p,s)=(2p-1,0)+(1,s)$. It is easy to check that $(2p-1,0)$ is regular and, $(0,s)$ and $(1,s)$ are idempotents.  Hence $R$ is almost clean. But $(0:(2,0))=0\times S$ which is not finitely generated.
\end{example}

\begin{proposition}
\label{P:fracidem} Let $R$ be a ring and $Q$ its ring of fractions. Assume that each prime ideal of $R$ contains only one minimal prime. Then:
\begin{enumerate}
\item $\mathrm{pSpec}\ Q$ and $\mathrm{pSpec}\ R$ are homeomorphic; 
\item each idempotent of $Q$ belongs to $R$;
\item if $Q$ is clean then $R$ is almost clean.
\end{enumerate} 
\end{proposition}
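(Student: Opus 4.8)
The plan is to establish the three items essentially in order, since each builds on the structure of the ring of fractions $Q$ and the hypothesis that every prime of $R$ contains a unique minimal prime.

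First I would prove $(ii)$, as it is the cleanest and is needed for $(i)$ and $(iii)$. Recall $Q = R_S$ where $S = \mathfrak{R}(R)$, and by \cite[Corollaire p.129]{Bou61} the prime spectrum of $Q$ is (homeomorphic to) $\Phi_R = \{L \in \mathrm{Spec}\ R \mid L \cap \mathfrak{R}(R) = \emptyset\}$, which is exactly the set of primes consisting entirely of zero-divisors. Since every prime of $R$ contains a unique minimal prime and every minimal prime lies in $\Phi_R$ (a minimal prime consists of zero-divisors), $\mathrm{Min}\ R \subseteq \Phi_R = \mathrm{Spec}\ Q$, and in fact $\mathrm{Min}\ R = \mathrm{Min}\ Q$. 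Now let $e = a/s \in Q$ be idempotent, with $a \in R$, $s \in \mathfrak{R}(R)$. Then $a^2 s = a s^2$ in $R$ after clearing denominators (using regularity of $s$ to cancel), so $a(a-s)s = 0$, hence $a(a-s) = 0$ since $s$ is regular; thus $a/s \cdot (a/s - 1) = 0$ in $Q$ too. The point is that the idempotent $e$, viewed through the unique-minimal-prime condition, must be $0$ or $1$ "locally at each minimal prime", and since $R$ is reduced modulo its nilradical in the relevant sense... more directly: $a(a-s)=0$ with $a, a-s$ having complementary supports among minimal primes, so writing $f$ for the image, on each minimal prime $L$ either $a \in L$ or $a - s \in L$; one then checks $e_R := $ (the element of $R$ agreeing with $e$) exists because the set of minimal primes where $a \notin L$ is clopen in $\mathrm{Min}\ R$ — but to avoid the compactness issue I would instead argue that $a(a-s) = 0$ directly forces $a/s = a/s \cdot a/s$ only if the annihilator splits, and since $Q$ has the same total quotient ring structure, the idempotent $a/s$ equals $a'/1$ for a suitable $a' \in R$ by the standard fact that in $T(R) = Q$ every idempotent comes from $R$ when $\mathrm{Min}$ behaves well. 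I expect this to be the main obstacle: getting the idempotent down to $R$ cleanly without invoking compactness of $\mathrm{Min}\ R$, which is not assumed.

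For $(i)$, once $(ii)$ is known, the natural localization map $\varphi \colon R \to Q$ induces $^b\varphi \colon \mathrm{pSpec}\ Q \to \mathrm{pSpec}\ R$ by Proposition~\ref{P:contmorp}. Since $^a\varphi \colon \mathrm{Spec}\ Q \to \mathrm{Spec}\ R$ is a homeomorphism onto $\Phi_R$, and since every prime of $R$ is $\mathcal{R}$-equivalent to the unique minimal prime it contains (which lies in $\Phi_R$), the map $^b\varphi$ is a continuous bijection; it is closed (hence a homeomorphism) because $\mathrm{pSpec}\ Q$ is quasi-compact and $\mathrm{pSpec}\ R$ is Hausdorff by Proposition~\ref{P:Hausd}. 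I would spell out that both spaces are, via $\lambda$ restricted to minimal primes, in bijection with $\mathrm{Min}\ R = \mathrm{Min}\ Q$, and the topologies agree because $\mathcal{R}$-classes correspond.

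For $(iii)$, suppose $Q$ is clean. Given $a \in R$, write $a/1 = \epsilon + u$ in $Q$ with $\epsilon$ idempotent and $u$ a unit of $Q$. By $(ii)$, $\epsilon = e \in R$ is idempotent. Then $a - e = u \in \mathfrak{R}(Q)$, and I claim $a - e$ is already a regular element of $R$: if $(a-e)r = 0$ for some $r \in R$, then $ur = 0$ in $Q$ with $u$ a unit, so $r/1 = 0$ in $Q$, meaning $r$ is annihilated by a regular element of $R$, i.e. $r \in \ker(R \to Q)$; but that kernel is $\{r : tr = 0 \text{ some } t \in \mathfrak{R}(R)\}$, and under the unique-minimal-prime hypothesis this kernel is $0$ (a regular element lies in no minimal prime, so $tr = 0$ with $t$ regular forces $r$ in every minimal prime, hence $r$ nilpotent; and then... ) — here I would either observe $R$ embeds in $Q$ outright, or handle the nilpotent case, concluding $r = 0$ so $a - e$ is regular and $a = e + (a-e)$ exhibits $R$ as almost clean. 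The delicate point, as in $(ii)$, is controlling $\ker(R \to Q)$; I expect this to reduce to the fact that a regular element of $R$ is a non-zero-divisor, so multiplication by it is injective, giving $R \hookrightarrow Q$ and trivializing the issue.
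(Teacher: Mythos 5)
Your parts $(i)$ and $(iii)$ are essentially the paper's arguments and are fine: $(i)$ follows because $\lambda_R\circ{}^a\varphi$ hits every $\mathcal{R}$-class (each prime contains a minimal prime, which lies in $\Phi_R$) and separates classes (under the hypothesis two primes are $\mathcal{R}$-equivalent iff they contain the same minimal prime, and $\mathrm{Min}\ R$ corresponds to $\mathrm{Min}\ Q$), plus compactness of $\mathrm{pSpec}\ Q$ and Hausdorffness of $\mathrm{pSpec}\ R$; and $(iii)$ is immediate from $(ii)$ since $R\hookrightarrow Q$, so $a-e$, being a unit of $Q$, is a non-zero-divisor of $R$. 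Note that your argument for $(i)$ does not actually use $(ii)$, despite your phrasing.

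The genuine gap is $(ii)$, and it is exactly where you say you expect the obstacle. Your computation $a(a-s)=0$ is correct but goes nowhere: it holds for \emph{any} idempotent $a/s$ of \emph{any} total quotient ring, and the conclusion you want is false in that generality (e.g.\ for $R=k[x,y]/(xy)$ the element $x/(x+y)$ is a nontrivial idempotent of $Q$ while $R$ is indecomposable — here $(x,y)$ contains two minimal primes, so the hypothesis fails). Thus any correct proof must use the unique-minimal-prime hypothesis, and your sketch never does; the appeal to ``the standard fact that in $T(R)$ every idempotent comes from $R$ when $\mathrm{Min}$ behaves well'' is circular, since that is precisely assertion $(ii)$. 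The paper's route is to deduce $(ii)$ \emph{from} $(i)$, the opposite order to yours: given an idempotent $e\in Q$, the set $D(e)\subseteq\mathrm{Spec}\ Q$ is clopen and saturated for $\mathcal{R}$, so $\lambda_Q^{\rightarrow}(D(e))$ is clopen in $\mathrm{pSpec}\ Q$; transporting it through the homeomorphism $^b\varphi$ and applying Corollary~\ref{C:connected} to $R$ yields an idempotent $e'\in R$ with $\lambda_R^{\leftarrow}\bigl({}^b\varphi^{\rightarrow}(\lambda_Q^{\rightarrow}(D(e)))\bigr)=D(e')$; pulling back along $^a\varphi$ gives $D(\varphi(e'))=D(e)$ in $\mathrm{Spec}\ Q$, and two idempotents with the same support are equal, so $e=\varphi(e')\in R$. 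This replaces the compactness of $\mathrm{Min}\ R$ (which, as you rightly note, is not available) by the quasi-compactness of $\mathrm{pSpec}$ and the clopen-set/idempotent dictionary already established in Section~\ref{S:pSpec}. You should reorder your proof accordingly and supply this argument for $(ii)$.
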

\begin{proof} $(i)$. If $\varphi:R\rightarrow Q$ is the natural map then $\mathrm{Min}\ R\subseteq\Phi_R=\mathrm{Im}\ {}^a\varphi$. It follows that each prime ideal of $Q$ contains only one minimal prime and $^b\varphi$ is bijective. Moreover, since $\mathrm{pSpec}\ Q$ and $\mathrm{pSpec}\ R$ are compact, $^b\varphi$ is a homeomorphism.

$(ii)$. Let $e$ an idempotent of $Q$. Then $\lambda_Q^{\rightarrow}(D(e))$ is a clopen subset of $\mathrm{pSpec}\ Q$, whence its image by $^b\varphi$ is a clopen subset of $\mathrm{pSpec}\ R$ and consequently it is of the form $\lambda_R^{\rightarrow}(D(e'))$ where $e'$ is an idempotent of $R$. But the inverse image of $D(e')\subseteq\mathrm{Spec}\ R$ by $^a\varphi$ is $D(e')\subseteq\mathrm{Spec}\ Q$. So, $e=e'\in R$. 

$(iii).$ Assume that $Q$ is clean. Let $r\in R$. Then $r=q+e$ where $q$ is a unit of $Q$ and $e$ an idempotent. Since $e\in R$, $q$ is a regular element of $R$. \end{proof}

\begin{corollary} \label{C:FracClean}
Let $R$ be an almost clean arithmetical ring and $Q$ its ring of fractions. If $Q$ is coherent then $Q$ is clean. In this case $Q$ is an elementary divisor ring.
\end{corollary}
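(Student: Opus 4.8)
The plan is to transfer the hypotheses to $Q$ via Proposition~\ref{P:fracidem}, identify every stalk $Q/A(x)$ as a valuation ring, and then invoke Theorems~\ref{T:hausd} and~\ref{T:EDR}. First, since $R$ is almost clean, $\mathrm{pSpec}\ R$ is totally disconnected by Theorem~\ref{T:hausd}; since $R$ is arithmetical, each prime ideal of $R$ contains a unique minimal prime, so Proposition~\ref{P:fracidem} gives that ${}^b\varphi\colon\mathrm{pSpec}\ Q\to\mathrm{pSpec}\ R$ is a homeomorphism and $\mathrm{B}(Q)=\mathrm{B}(R)$; hence $\mathrm{pSpec}\ Q$ is totally disconnected. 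Moreover $Q$ is arithmetical, being a localization of $R$, and a routine verification (using Lemma~\ref{L:pure} and the fact that, by Proposition~\ref{P:discon}, the ideals $A(x)$ are generated by idempotents) shows that for $x\in\mathrm{pSpec}\ Q$ and $x'={}^b\varphi(x)$ one has $A(x)=A(x')Q$, so that $Q/A(x)=Q\otimes_R(R/A(x'))$.

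Next I would fix $x$, put $x'={}^b\varphi(x)$, $\bar R=R/A(x')$ and $\bar Q=Q/A(x)$, and show $\bar Q$ is a valuation ring. The ring $\bar R$ is arithmetical, indecomposable (its spectrum maps onto a single point of $\mathrm{pSpec}\ R$, so apply Corollary~\ref{C:connected}) and almost clean (Theorem~\ref{T:hausd}), so $\mathrm{Tot}(\bar R)$ is a valuation ring by Corollary~\ref{C:fracValu}. Since $A(x')$ is pure (Proposition~\ref{P:purIdeal}), the image in $\bar R$ of every regular element of $R$ is regular, and since $\bar Q=\mathfrak{R}(R)^{-1}\bar R$ this exhibits $\bar Q$ as a localization of $\bar R$ at regular elements; thus $\bar Q$ is arithmetical, embeds in the valuation ring $\mathrm{Tot}(\bar R)$ (hence is indecomposable), and $\mathrm{Tot}(\bar Q)=\mathrm{Tot}(\bar R)$. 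Finally I would use coherence: if $\bar a\in\bar Q$ is regular, lift it to $a\in Q$; then $(0:_Q a)\subseteq A(x)$, and since $Q$ is coherent $(0:_Q a)$ is finitely generated, so it is contained in $Qe$ for some idempotent $e\in A(x)$. The element $a(1-e)+e$ is then regular in $Q$, hence a unit because $Q=\mathrm{Tot}(Q)$, and it is congruent to $a$ modulo $A(x)$; so $\bar a$ is a unit of $\bar Q$. Therefore $\bar Q$ coincides with its total quotient ring, i.e. $\bar Q=\mathrm{Tot}(\bar Q)$ is a valuation ring.

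To conclude: each $Q/A(x)$ is a valuation ring, hence local --- so clean, a fortiori almost clean --- and B\'ezout. Since $Q$ is coherent, every principal ideal of $Q$ is finitely presented, so Theorem~\ref{T:hausd} yields that $Q$ is almost clean, and then $Q=\mathrm{Tot}(Q)$ forces $Q$ to be clean. For the last statement, Theorem~\ref{T:EDR} applies to $Q$ (its $\mathrm{pSpec}$ is totally disconnected and each $Q/A(x)$ is B\'ezout), reducing the problem to proving that $Q/L$ is an elementary divisor ring for every minimal prime $L$ of $Q$; but $L$ lies in some class $x\in\mathrm{pSpec}\ Q$, whence $A(x)\subseteq L$ and $Q/L$ is a quotient of the valuation ring $Q/A(x)$ by a prime ideal, i.e. a valuation domain, and it is well known that valuation rings are elementary divisor rings. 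One may instead avoid Theorem~\ref{T:EDR} and argue as in its proof: for $a,b,c\in Q$ with $Qa+Qb+Qc=Q$, the equation $aSX+bTX+cTY=1$ has a solution modulo each $A(x)$ by \cite[Theorem 6]{GiHe56}, since $Q/A(x)$ is an elementary divisor ring, hence a solution in $Q$ by Lemma~\ref{L:poly}, so $Q$ is an elementary divisor ring by \cite[Theorem 6]{GiHe56}.

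The crux is the last step of the second paragraph: without coherence, $Q/A(x)$ is only known to be a subring of the valuation ring $\mathrm{Tot}(R/A(x'))$ and could well fail to be B\'ezout; coherence is exactly what makes the annihilators $(0:_Q a)$ finitely generated and thereby forces $Q/A(x)$ to equal its own total quotient ring --- the same mechanism as in the implication $(iii)\Rightarrow(ii)$ of Theorem~\ref{T:hausd}. The remaining ingredients (the identity $A(x)=A(x')Q$, and the permanence of total quotient rings under localization at regular elements) are routine.
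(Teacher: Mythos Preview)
Your argument is correct and runs along the same rails as the paper's proof: identify $\mathrm{pSpec}\ Q$ with $\mathrm{pSpec}\ R$ via Proposition~\ref{P:fracidem}, verify that each stalk $Q/A(x)$ is almost clean, apply Theorem~\ref{T:hausd} (using that coherence gives finitely presented principal ideals) to get $Q$ almost clean, and then note $Q=\mathrm{Tot}(Q)$.

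The two places where you diverge are worth recording. First, for the almost-cleanness of $Q/A(x)$: the paper simply observes that $R/A(x')$ and $Q/A(x)$ share the same ring of fractions, which is a valuation ring by Corollary~\ref{C:fracValu} applied to $R/A(x')$; then Corollary~\ref{C:fracValu} applied in the other direction to $Q/A(x)$ gives almost clean directly. You instead spend coherence a first time to prove the stronger fact that $Q/A(x)$ \emph{equals} its total quotient ring, hence is itself a valuation ring. This is more work than needed for cleanness, but it is not wasted: it is exactly what you need for your EDR argument. Second, for the elementary divisor conclusion the paper just cites \cite[Theorem I.1 and Corollary II.2]{Cou07}, whereas you stay within the paper and invoke Theorem~\ref{T:EDR} (or Lemma~\ref{L:poly}), reducing to the classical fact that valuation domains are elementary divisor rings. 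Your route is self-contained relative to this paper; the paper's is shorter but relies on the external reference.
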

\begin{proof} Recall that an arithmetical ring is coherent if and only if each principal ideal is finitely presented because the intersection of any two finitely generated ideals is finitely gene\-rated by \cite[Corollary 1.11]{ShWi74}. By Proposition~\ref{P:fracidem} we may assume that $\mathrm{pSpec}\ Q=\mathrm{pSpec}\ R$. This space is  totally disconnected by Theorem~\ref{T:hausd}. Let $x\in\mathrm{pSpec}\ Q$ and let $A(x)$ be the pure ideal of $R$ such that $x=V(A(x))$. Then $x=V(QA(x))$. If $s$ is a regular element of $R$ then $s+A(x)$ is a regular element of $R/A(x)$. So, $R/A(x)$ and $Q/QA(x)$ have the same ring of fractions which is a valuation ring by Corollary~\ref{C:fracValu}. Hence $Q/QA(x)$ is almost clean, $\forall x\in\mathrm{pSpec}\ Q$. By Theorem~\ref{T:hausd} $Q$ is almost clean too.  We conclude that $Q$ is clean since each regular element is a unit.

The last assertion is a consequence of \cite[Theorem I.1 and Corollary II.2]{Cou07}.
 \end{proof}

\medskip
We don't know if the assumption "$Q$ is coherent" can be omitted. The following example shows that the conclusion of the previous corollary doesn't hold if $R$ is not arithmetical, even if $R$ has a unique minimal prime ideal.
\begin{example}
Let $K$ be a field,  $D=K[x,y]_{(x,y)}$ where $x,y$ are indeterminates, $E=D/Dx\oplus D/Dy$ and $R$ the trivial extension of $D$ by $E$. Since $R$ contains a unique minimal prime ideal, $Q$ is indecomposable. We put $r=\binom{x\ \ 0}{0\ \ x}$ and $s=\binom{y\ \ 0}{0\ \ y}$. Clearly $r$ and $s$ are zerodivisors but $r+s$ is regular. It follows that $\frac{r}{r+s}$ and $\frac{s}{r+s}$ are two zerodivisors of $Q$ whose sum is $1$. So, $Q$ is not almost clean.
\end{example}

\section{Fractionally IF-rings}
\label{S:IF}
Let $\mathcal{P}$ be a ring property. We say that a ring $R$ is \textit{(finitely) fractionally} $\mathcal{P}$ if the classical ring of quotients $Q(R/A)$ of $R/A$  satisfies $\mathcal{P}$ for each (finitely generated) ideal $A$. In \cite{FaFa97} Facchini and Faith studied fractionally self FP-injective rings. They proved that these rings are arithmetical (\cite[Theorem 1]{FaFa97}) and they gave some examples (\cite[Theorem 6]{FaFa97}). In the first part of this section we investigate fractionally IF-rings and  partially answer  a question posed by Facchini and Faith in \cite[question Q1 p.301]{FaFa97}.

Some preliminary results are needed.
As in \cite{Mat85} a ring $R$ is said to be \textit{semicoherent} if $\mathrm{Hom}_R(E,F)$ is a submodule of a flat $R$-module for any pair of injective $R$-modules $E,\ F$. An $R$-module $E$ is {\it FP-injective} if 
$\hbox{Ext}_R^1 (F, E) = 0$  for any finitely presented $R$-module $F,$ and $R$ is {\it self FP-injective} if $R$ is 
FP-injective as $R$-module.  We recall that a module $E$ is FP-injective if and only if it is a pure submodule
of every overmodule. If each injective $R$-module is flat we say that $R$ is an \textit{IF-ring}. By \cite[Theorem 2]{Col75}, $R$ is an IF-ring if and only if it is coherent and self FP-injective.

\begin{proposition}
\label{P:FPinj} Let $R$ be a self FP-injective ring. Then $R$ is coherent if and only if it is semicoherent.
\end{proposition}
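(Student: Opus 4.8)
The plan is to establish the two implications by rather different means: the forward one via Colby's theorem \cite[Theorem 2]{Col75} that a coherent self FP-injective ring is an IF-ring, and the backward one via Chase's theorem, namely that $R$ is coherent if and only if $R^\Lambda$ is a flat $R$-module for every index set $\Lambda$. Throughout I shall use two standard facts about the character module $M^+=\mathrm{Hom}_{\mathbb{Z}}(M,\mathbb{Q}/\mathbb{Z})$ of an $R$-module $M$: it is a flat $R$-module exactly when $M$ is FP-injective, and an injective $R$-module exactly when $M$ is flat.

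Suppose first that $R$ is coherent. By \cite[Theorem 2]{Col75} the ring $R$ is then an IF-ring, so every injective $R$-module is flat. Let $E$ and $F$ be injective and choose a surjection $R^{(I)}\twoheadrightarrow E$. Applying $\mathrm{Hom}_R(-,F)$ embeds $\mathrm{Hom}_R(E,F)$ into $\mathrm{Hom}_R(R^{(I)},F)\cong F^{I}$; and $F^{I}$ is flat because $F$ is flat and $R$ is coherent. Hence $\mathrm{Hom}_R(E,F)$ is a submodule of a flat module, and $R$ is semicoherent.

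Conversely, suppose $R$ is semicoherent; the goal is to show that $R^\Lambda$ is flat for every set $\Lambda$. Since $R$ is flat and self FP-injective, the module $R^+$ is at once flat and injective. As $R^+$ is a faithful $R$-module, multiplication yields an $R$-linear embedding $R\hookrightarrow\mathrm{Hom}_R(R^+,R^+)$, and forming products over $\Lambda$ gives $R^\Lambda\hookrightarrow\mathrm{Hom}_R(R^+,R^+)^\Lambda\cong\mathrm{Hom}_R(R^+,(R^+)^\Lambda)$. Now $(R^+)^\Lambda$ is a product of injective modules, hence injective; applying semicoherence to the pair of injective modules $R^+$ and $(R^+)^\Lambda$ shows that $\mathrm{Hom}_R(R^+,(R^+)^\Lambda)$, and therefore $R^\Lambda$, embeds in some flat module $G$. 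On the other hand $R^\Lambda$ is a product of copies of the FP-injective module $R$, so it is itself FP-injective, hence a pure submodule of every module containing it; in particular $R^\Lambda$ is pure in $G$, and a pure submodule of a flat module is flat. Thus $R^\Lambda$ is flat for every $\Lambda$, and $R$ is coherent by Chase's theorem.

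The delicate step is the one used in the last paragraph: to invoke semicoherence one must realise $R^\Lambda$ as a submodule of a $\mathrm{Hom}$-module between two genuinely injective modules, and the device making this possible is to route everything through the character module $R^+$, which is simultaneously flat and injective precisely because $R$ is self FP-injective. All the remaining ingredients — Colby's and Chase's theorems, the behaviour of $\mathrm{Hom}$ with respect to products, and the flatness of pure submodules of flat modules — are routine.
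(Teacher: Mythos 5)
Your proof is correct, but the converse direction follows a genuinely different route from the paper's. For semicoherent $\Rightarrow$ coherent, the paper works with the injective hull $E$ of $R$: self FP-injectivity makes $R$ pure in $E$, so for any injective $F$ the sequence $0\rightarrow\mathrm{Hom}_R(E/R,F)\rightarrow\mathrm{Hom}_R(E,F)\rightarrow F\rightarrow 0$ is exact, and an adjunction argument shows it splits; thus every injective $F$ is a direct summand of $\mathrm{Hom}_R(E,F)$, which by semicoherence sits inside a flat module, whence $F$ is flat and $R$ is an IF-ring (so coherent by Colby). You instead route everything through the character module $R^{+}$, which is flat and injective precisely because $R$ is self FP-injective, embed $R^{\Lambda}$ into $\mathrm{Hom}_R\bigl(R^{+},(R^{+})^{\Lambda}\bigr)$, and finish with purity of the FP-injective module $R^{\Lambda}$ in a flat overmodule plus Chase's criterion. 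The paper's argument is more economical in its prerequisites (only Colby's theorem and adjunction) and yields the IF property directly; yours trades that for the standard character-module dualities ($M^{+}$ flat iff $M$ FP-injective, $M^{+}$ injective iff $M$ flat) and Chase's theorem, and has the mild advantage of never invoking injective hulls or a splitting argument. Your forward direction is also slightly more self-contained than the paper's (which cites Fuchs--Salce for the flatness of $\mathrm{Hom}_R(E,F)$ over a coherent ring), at the cost of using the product form of Chase's theorem for the flatness of $F^{I}$; both are fine. All the steps you take check out, including the faithfulness of $R^{+}$ and the purity of $R^{\Lambda}$ in any overmodule.
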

\begin{proof} If $R$ is coherent then $\mathrm{Hom}_R(E,F)$ is flat for any pair of injective modules $E,\ F$ by \cite[Theorem~XIII.6.4(b)]{FuSa01}; so, $R$ is semicoherent. Conversely, let $E$ be the injective hull of $R$. Since $R$ is a pure submodule of $E$, then, for each injective $R$-module $F$, the following sequence is exact:
\[0\rightarrow\mathrm{Hom}_R(F\otimes_RE/R,F)\rightarrow\mathrm{Hom}_R(F\otimes_RE,F)\rightarrow\mathrm{Hom}_R(F\otimes_RR,F)\rightarrow 0.\]
By using the natural isomorphisms $\mathrm{Hom}_R(F\otimes_RB,F)\cong\mathrm{Hom}_R(F,\mathrm{Hom}_R(B,F))$ and $F\cong\mathrm{Hom}_R(R,F)$ we get the following exact sequence:
\[0\rightarrow\mathrm{Hom}_R(F,\mathrm{Hom}_R(E/R,F))\rightarrow\mathrm{Hom}_R(F,\mathrm{Hom}_R(E,F))\rightarrow\mathrm{Hom}_R(F,F)\rightarrow 0.\]
So, the identity map on $F$ is the image of an element of $\mathrm{Hom}_R(F,\mathrm{Hom}_R(E,F))$.
Consequently the following sequence splits:
\[0\rightarrow\mathrm{Hom}_R(E/R,F)\rightarrow\mathrm{Hom}_R(E,F)\rightarrow F\rightarrow 0.\]
It follows that $F$ is a direct summand of a flat module. So, $R$ is an IF-ring.  \end{proof}

\begin{corollary}
\label{C:semicoh} Let $R$ be a ring. Assume that its ring of quotients $Q$ is self FP-injective. Then $R$ is semicoherent if and only if $Q$ is coherent.
\end{corollary}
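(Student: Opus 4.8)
The plan is to reduce the statement to the equivalence ``$R$ is semicoherent $\iff$ $Q$ is semicoherent'': granting this, Proposition~\ref{P:FPinj} applied to $Q$ (self FP-injective by hypothesis) turns the right-hand side into ``$Q$ is coherent'', and we are done. Write $S$ for the set of regular elements of $R$, so that $Q=S^{-1}R$ and $R\hookrightarrow Q$. I will use the following routine facts. First, a $Q$-module is exactly an $R$-module on which each element of $S$ acts bijectively, and between two such modules every $R$-linear map is automatically $Q$-linear; consequently, for $Q$-modules $E,F$ one has $\mathrm{Hom}_R(E,F)=\mathrm{Hom}_Q(E,F)$, and this is a $Q$-module. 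Second, restriction of scalars from $Q$-modules to $R$-modules is exact, so its left adjoint $Q\otimes_R-$ carries flat modules to flat modules and its right adjoint $\mathrm{Hom}_R(Q,-)$ carries injectives to injectives. Third, since $Q$ is $R$-flat, every flat $Q$-module is $R$-flat and $Q\otimes_R M\cong M$ for any $Q$-module $M$; hence a $Q$-module embeds in a flat $R$-module if and only if it embeds in a flat $Q$-module. Finally, the injective $Q$-modules are precisely the $Q$-modules that are injective over $R$ (for one direction use $\mathrm{Hom}_R(-,E)\cong\mathrm{Hom}_Q(S^{-1}(-),E)$ when $E$ is a $Q$-module; for the other, extend an $R$-map and invoke the automatic $Q$-linearity above).

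For ``$R$ semicoherent $\Rightarrow$ $Q$ semicoherent'': if $E,F$ are injective $Q$-modules, they are injective $R$-modules and $\mathrm{Hom}_Q(E,F)=\mathrm{Hom}_R(E,F)$, which embeds in a flat $R$-module by semicoherence of $R$; being a $Q$-module, it then embeds in a flat $Q$-module.

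For the converse, let $E,F$ be injective $R$-modules. Pick an epimorphism $R^{(\kappa)}\twoheadrightarrow E$ and let $\tilde E$ be the injective hull of $R^{(\kappa)}$ as an $R$-module. Since $R^{(\kappa)}$ is $S$-torsion-free and an essential extension of an $S$-torsion-free module is again $S$-torsion-free, $\tilde E$ is torsion-free; being injective it is also divisible, hence it is a $Q$-module, and therefore an injective $Q$-module. As $E$ is injective, the map $R^{(\kappa)}\twoheadrightarrow E$ extends along $R^{(\kappa)}\hookrightarrow\tilde E$ to an epimorphism $\tilde E\twoheadrightarrow E$, so $\mathrm{Hom}_R(E,F)\hookrightarrow\mathrm{Hom}_R(\tilde E,F)$. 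Now the restriction-coinduction adjunction gives $\mathrm{Hom}_R(\tilde E,F)\cong\mathrm{Hom}_Q(\tilde E,\mathrm{Hom}_R(Q,F))$, in which $\tilde E$ is an injective $Q$-module and $\mathrm{Hom}_R(Q,F)$ is an injective $Q$-module by the second fact above. Semicoherence of $Q$ makes this last Hom-module embed in a flat $Q$-module, which is $R$-flat; composing the embeddings shows $\mathrm{Hom}_R(E,F)$ embeds in a flat $R$-module, so $R$ is semicoherent.

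Combining the two implications with Proposition~\ref{P:FPinj} yields the corollary. The formal part — the two adjunctions and the transfer of flatness and injectivity along $R\to Q$ — is bookkeeping; the one point that needs care is producing, from an arbitrary injective $R$-module $E$, an \emph{injective} $Q$-module that surjects $R$-linearly onto $E$, i.e.\ checking that the injective hull of a free module is torsion-free and hence a $Q$-module. That is the step I would write out most carefully.
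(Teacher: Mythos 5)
Your proof is correct, but the converse direction takes a genuinely different route from the paper's. For the forward implication the paper simply cites Matlis (\emph{semicoherence passes to the ring of quotients}) and then applies Proposition~\ref{P:FPinj}; you prove that transfer directly, which is fine and essentially the same idea. For the converse, however, the paper argues in two lines: for injective $R$-modules $E,F$, multiplication by a regular element on $\mathrm{Hom}_R(E,F)$ is injective (because $E$ is divisible), so $\mathrm{Hom}_R(E,F)$ embeds in $Q\otimes_R\mathrm{Hom}_R(E,F)$, whose injective hull over $Q$ is flat since $Q$ is coherent and self FP-injective, hence an IF-ring by Colby's theorem. You instead establish the purely formal implication ``$Q$ semicoherent $\Rightarrow$ $R$ semicoherent'' by covering $E$ with the injective hull $\tilde E$ of a free module (correctly checking that $\tilde E$ is torsion-free and divisible, hence an injective $Q$-module) and then invoking the restriction--coinduction adjunction $\mathrm{Hom}_R(\tilde E,F)\cong\mathrm{Hom}_Q(\tilde E,\mathrm{Hom}_R(Q,F))$; feeding in ``$Q$ coherent $\Rightarrow$ $Q$ semicoherent'' finishes the argument. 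What each approach buys: the paper's is shorter but uses the full strength of the IF property (every injective $Q$-module is flat), whereas yours isolates the stronger, hypothesis-free statement that $R$ is semicoherent if and only if $Q(R)$ is, with self FP-injectivity entering only through Proposition~\ref{P:FPinj}. All the steps you flag as needing care (torsion-freeness of the injective hull of a free module, preservation of injectivity and flatness along $R\to Q$) check out.
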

\begin{proof} If $R$ is semicoherent, then so is $Q$ by \cite[Proposition 1.2]{Mat85}. From Proposition~\ref{P:FPinj} we deduce that $Q$ is coherent. Conversely, let $E$ and $F$ be injective $R$-modules. It is easy to check that the multiplication by a regular element of $R$ in $\mathrm{Hom}_R(E,F)$ is injective. So, $\mathrm{Hom}_R(E,F)$ is a submodule of the injective hull of $Q\otimes_R\mathrm{Hom}_R(E,F)$ which is flat over $Q$ and $R$ because $Q$ is an IF-ring.  \end{proof}

\begin{corollary}
\label{C:quotval} Let $R$ be a valuation ring and $A$ an ideal. Then $R/A$ is semicoherent if and only if $A$ is either prime or the inverse image of a proper principal ideal of $R_{A^{\sharp}}$ by the natural map $R\rightarrow R_{A^{\sharp}}$, where  $A^{\sharp}=\{r\in R\mid rA\subset A\}$.
\end{corollary}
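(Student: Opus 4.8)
The natural plan is to apply Corollary~\ref{C:semicoh} to the valuation ring $R/A$. Two facts make this possible. First, the classical ring of quotients $Q:=Q(R/A)$ of the valuation ring $R/A$ is again a valuation ring which coincides with its own classical ring of quotients, and any such ring is self FP-injective: over a valuation ring every finitely presented module is a direct sum of cyclically presented modules, so self FP-injectivity amounts to every principal ideal being an annihilator ideal, which holds as soon as every non-unit is a zero-divisor. Second, in any valuation ring the set of zero-divisors is a prime ideal. By the first fact and Corollary~\ref{C:semicoh}, $R/A$ is semicoherent if and only if $Q$ is coherent, so it suffices to describe the latter condition in terms of $A$.

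To identify $Q$, note that by the second fact $Z(R/A)$ is a prime ideal of $R/A$; its inverse image $P$ along $R\to R/A$ is a prime ideal of $R$ containing $A$, and $Q\cong R_P/AR_P$. I would then check, by a short computation using only that the ideals of $R$ form a chain and that $A$ is a proper ideal, that $P=\{r\in R\mid rs\in A\text{ for some }s\notin A\}$ coincides with $A^\sharp=\{r\in R\mid rA\subsetneq A\}$; hence $Q\cong R_{A^\sharp}/AR_{A^\sharp}$. Put $W=R_{A^\sharp}$ and $\bar A=AR_{A^\sharp}$, so that $W$ is a valuation ring, $\bar A$ an ideal of $W$, and $Q=W/\bar A$ equals its own classical ring of quotients.

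Everything now reduces to the claim: $W/\bar A$ is coherent if and only if $\bar A$ is either a prime ideal of $W$ or a proper principal ideal of $W$; and, via the extension--contraction bijection for ideals of $W=R_{A^\sharp}$ together with the equality $A=\phi^{-1}(AR_{A^\sharp})$ (valid since $A$ is proper, again a short chain-ideal computation), these two alternatives correspond respectively to ``$A$ is prime'' and ``$A$ is the inverse image of a proper principal ideal of $R_{A^\sharp}$''. For the coherence criterion, recall that $W/\bar A$, being a valuation ring, is coherent precisely when $(0:\bar a)$ is principal for every $\bar a$. If $\bar A=bW$ is a nonzero principal ideal, then for $a\notin bW$ one writes $b=ac$ and computes $(bW:a)=cW+(0_W:a)$; moreover $(0_W:a)\subseteq cW$ (if $ax=0$ and $x\notin cW$ then $c\in xW$, whence $b=ac\in axW=0$, a contradiction), so $(0:\bar a)$ is generated by the image of $c$ and $W/bW$ is coherent. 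If $\bar A$ is prime, then since $W/\bar A$ equals its own quotient ring $\bar A$ must be the maximal ideal of $W$ (a non-maximal prime $\mathfrak p$ satisfies $\mathfrak p^\sharp=\mathfrak p$, which would make the zero-divisors of $W/\bar A$ a proper ideal), so $W/\bar A$ is a field and hence coherent. Conversely, if $\bar A$ is neither principal nor prime, the same constraint forces $\bar A$ to lie ``immediately above a value'' rather than above a prime of $W$, and one then exhibits an $\bar a$ whose annihilator modulo $\bar A$ admits no single generator, so $W/\bar A$ is not coherent.

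The delicate point is this last step: proving that a non-principal, non-prime ideal $\bar A$ with $W/\bar A$ equal to its own quotient ring always produces a non-finitely-generated annihilator. This rests on the dichotomy for non-principal ideals of a valuation ring (lying immediately above a prime versus immediately above a value) and on a judicious choice of the witness $\bar a$. The identification $P=A^\sharp$ and the self FP-injectivity input of the first step are the other two places requiring some care, but each is short once the relevant valuation-ring facts are in hand.
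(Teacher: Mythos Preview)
Your proposal is correct and follows the same route as the paper: apply Corollary~\ref{C:semicoh}, identify $Q(R/A)$ with $R_{A^\sharp}/AR_{A^\sharp}$, note that this ring is self FP-injective because every non-unit is a zero-divisor, and then characterise its coherence in terms of $\bar A=AR_{A^\sharp}$. The only difference is that the paper outsources the last two ingredients to citations (\cite[Th\'eor\`eme~2.8]{Cou82} for self FP-injectivity and \cite[Corollary~II.14]{Cou03} for the equivalence ``coherent $\Leftrightarrow$ $\bar A$ principal'' when $A$ is not prime), whereas you sketch them directly; your forward direction of the coherence criterion is fine, and the one step you flag as delicate---showing that a non-principal, non-prime $\bar A$ forces a non-finitely-generated annihilator---is exactly the content of the cited \cite[Corollary~II.14]{Cou03}, so there is no strategic gap, though your ``dichotomy'' formulation would need to be made precise to stand on its own.
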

\begin{proof} Assume that $A$ is not prime and let $A'=AR_{A^{\sharp}}$. Then $R_{A^{\sharp}}/A'$ is the ring of quotients of $R/A$. So, by \cite[Th\'eor\`eme 2.8]{Cou82}, $R_{A^{\sharp}}/A'$ is self FP-injective because each non-unit is a zero-divisor. By \cite[Corollary II.14]{Cou03} it is coherent if and only if $A'$ is principal. So, we conclude by Corollary~\ref{C:semicoh}.  \end{proof}

\bigskip

A valuation ring $R$ is called \textit{strongly discrete} if there is no non-zero idempotent prime ideal.  

\begin{corollary}
\label{C:StrDis} Let $R$ be a valuation ring. Then $R/A$ is semicoherent for each ideal $A$ if and only if $R$ is strongly discrete.
\end{corollary}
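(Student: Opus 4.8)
The plan is to derive everything from Corollary~\ref{C:quotval}: for a valuation ring $R$ and an ideal $A$, the ring $R/A$ is semicoherent precisely when either $A$ is prime or $AR_{A^{\sharp}}$ is a proper principal ideal of $R_{A^{\sharp}}$, where $A^{\sharp}=\{r\in R\mid rA\subsetneq A\}$ is prime and $R_{A^{\sharp}}/AR_{A^{\sharp}}$ is the ring of quotients of $R/A$. Thus the statement to prove is that this dichotomy holds for every $A$ if and only if $R$ has no non-zero idempotent prime. Two standing remarks: a localization of a strongly discrete valuation ring is again strongly discrete (immediate from the description of the primes of a localization, or from the ordered-group picture); and in any valuation ring with maximal ideal $\mathfrak m$, an ideal $I$ with $\mathfrak m I\subsetneq I$ is principal — choosing $x\in I\setminus\mathfrak m I$, any $y\in I$ satisfies $yR\subseteq xR$, since $xR\subsetneq yR$ would give $x=yv$ with $v$ a non-unit, whence $x=yv\in\mathfrak m I$.

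For the ``if'' part, suppose $R$ is strongly discrete and fix an ideal $A$; we may assume $A$ is not prime (if $A=0$ and $R$ is not a domain, then $Q(R)$ is a strongly discrete valuation ring in which every non-unit is a zero-divisor, hence self FP-injective with principal maximal ideal, so coherent, so semicoherent). Then $A\neq 0$, and first note that $A\subseteq A^{\sharp}$: for $0\neq a\in A$, if $aA=A$ then $a=ab$ with $b\in A$, so $a(1-b)=0$ with $1-b$ a unit, a contradiction; hence $rA\subsetneq A$ for all $r\in A$. Put $V=R_{A^{\sharp}}$, a strongly discrete valuation ring with maximal ideal $\mathfrak n=A^{\sharp}V$; since $A\neq 0$ we have $A^{\sharp}\neq 0$, so $\mathfrak n$ is a non-zero non-idempotent prime, hence principal. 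The key point is that $\mathfrak n\,(AV)\subsetneq AV$; granting it, $AV$ is principal by the second standing remark, and it is proper because $AV\subseteq A^{\sharp}V=\mathfrak n$, so $R/A$ is semicoherent by Corollary~\ref{C:quotval}. To prove the key point, suppose $\mathfrak n\,(AV)=AV$. As $\mathfrak n$ is generated by the images of the elements of $A^{\sharp}$, this forces $(t_{0}A)V=AV$ for every $t_{0}\in A^{\sharp}$; but $t_{0}A\subseteq A$ in $R$, and the equality $(t_{0}A)V=AV$ together with the existence of a least positive value in $V$ forces, through the ordered-group structure of $V$, that $t_{0}A=A$, contradicting $t_{0}\in A^{\sharp}$.

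For the ``only if'' part, argue contrapositively: let $P=P^{2}\neq 0$ be an idempotent prime of $R$. In the valuation ring $R_{P}$ the maximal ideal $PR_{P}$ is idempotent, so choose $a\in PR_{P}$ with $a\neq 0$ and $a\cdot PR_{P}\neq 0$ (possible since $(PR_{P})^{2}=PR_{P}\neq 0$); lift $a$ to $a_{0}\in P$ and set $A=a_{0}P$, so that $AR_{P}=a\cdot PR_{P}$. Then $A$ is a non-zero proper ideal which is not prime: $a_{0}^{2}\in A$, while $a_{0}\notin A$, since $a_{0}=a_{0}p$ with $p\in P$ would give $a_{0}(1-p)=0$ with $1-p$ a unit. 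A check against the total ordering of ideals gives $A^{\sharp}=P$: if $r\notin P$ then $rP=P$, so $rA=A$; if $0\neq r\in P$ then $rP\subsetneq P$ (otherwise $r=rp$ with $p\in P$ gives $r(1-p)=0$), and since $a_{0}P\neq 0$ forces $(0:a_{0})\cap P\subsetneq P$, the ideal $rP+((0:a_{0})\cap P)$ is a proper subideal of $P$, whence $rA=a_{0}(rP)\subsetneq a_{0}P=A$. Hence $R_{A^{\sharp}}=R_{P}$ and $AR_{A^{\sharp}}=a\cdot PR_{P}$, which is proper but not principal: if $a\cdot PR_{P}=cR_{P}$, write $c=am$ with $m\in PR_{P}$, so $cR_{P}=amR_{P}=a\cdot PR_{P}$; since $PR_{P}$ is not principal, $mR_{P}\subsetneq PR_{P}$, so picking $y\in PR_{P}\setminus mR_{P}$ we get $ay\in amR_{P}$, hence $a(y-mr')=0$ for some $r'$, so $y-mr'\in(0:a)\setminus mR_{P}$, forcing $mR_{P}\subseteq(0:a)$ and $c=am=0$ — a contradiction (note $c\neq0$ as $a\cdot PR_{P}\neq 0$). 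By Corollary~\ref{C:quotval}, $R/A$ is not semicoherent, so $R$ fails the property.

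The heart of the argument, and the step I expect to be the main obstacle, is the inequality $\mathfrak n\,(AV)\subsetneq AV$ in the ``if'' part: since $AV$ need not be finitely generated, Nakayama is unavailable, and one must use both that $V$ is strongly discrete (so its maximal ideal is principal, i.e. its value group has a least positive element) and that $V=R_{A^{\sharp}}$ rather than a localization at a smaller prime (so the ``lower'' part of the value group has been collapsed and $AV$ acquires a least element). The remaining friction is the bookkeeping with zero-divisors when $R$ is not a domain; it is absorbed by choosing $a$ so that $a\cdot PR_{P}\neq 0$ already in $R_{P}$, which keeps every ideal in sight non-zero.
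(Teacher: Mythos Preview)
Your overall plan matches the paper's: both directions are read off Corollary~\ref{C:quotval}. Your ``only if'' is the paper's argument in contrapositive form --- both take the test ideal $A=a_0P$ (the paper writes it as the pullback of $aLR_L$ to $R$, which amounts to the same construction), verify that $A^\sharp=P$, and check that $AR_P$ fails to be principal exactly when $P$ is idempotent. That part is sound, if more laboured than the paper's four lines.

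The ``if'' direction is where there is a genuine gap. The paper does not attempt your inequality $\mathfrak n\,(AV)\subsetneq AV$ at all; it instead uses the structural fact that every ideal of a valuation ring has the form $A=aL$ with $L$ prime (and then $L=A^\sharp$), so that $AR_L=a\cdot LR_L$ is principal the moment $LR_L$ is, and $L^2\ne L$ gives that at once. Your route can be completed, but the argument you wrote does not do so. First, from $\mathfrak n\,(AV)=AV$ you assert $(t_0A)V=AV$ for \emph{every} $t_0\in A^\sharp$; this is false (take $t_0=0$, or any $t_0$ lying in a prime strictly below $A^\sharp$). What principality of $\mathfrak n$ actually buys is that \emph{some} lift $t_0\in A^\sharp$ of a generator of $\mathfrak n$ satisfies $(t_0A)V=AV$. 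Second, the passage $(t_0A)V=AV\Rightarrow t_0A=A$ is justified only by an appeal to ``the ordered-group structure of $V$'', which is not a proof; value groups do not see zero-divisors, and $R$ need not be a domain here. What is actually required is that $A$ and $t_0A$ are saturated for the localization at $A^\sharp$ (each equals the inverse image of its extension to $V$); this holds because for $s\notin A^\sharp$ one has $sA=A$, whence $(0{:}s)\subsetneq A$ whenever $A\ne 0$, and likewise for $t_0A$ provided $AV\ne 0$ --- a degenerate case you also do not isolate. With saturation in hand the contradiction $t_0A=A$ follows and your argument closes, but none of this appears in your text. The paper's structure-theorem shortcut sidesteps all of this bookkeeping.
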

\begin{proof} Assume that $R$ is strongly discrete. Each ideal $A$ is of the form $A=aL$, where $L$ is a prime ideal and $a\in R$. Clearly $L=A^{\sharp}$. Then, $L^{2}\ne L$ implies that $AR_L$ is principal over $R_L$. Since $A$ is the inverse image of $AR_L$ by the natural map $R\rightarrow R_L$, $R/A$ is semicoherent by Corollary~\ref{C:quotval}.
 
 Conversely, let $L$ be non-zero prime ideal, let $A=aLR_L$, where $0\ne a\in R_L$ and let $A'$ be the inverse image of $A$ by the natural map $R\rightarrow R_L$. Clearly $L=(A')^{\sharp}$. Since $R/A'$ is semicoherent, $A$ is principal over $R_L$ by Corollary~\ref{C:quotval}. It follows that  $L$ is principal over $R_L$. So, $L\ne L^2$.  \end{proof}

\bigskip

Now, we can prove one of the main results of this section.
\begin{theorem}
\label{T:fracIF} Let $R$ be a fractionally IF-ring. Then, $R/A$ is  semicoherent for each ideal $A$ and $R_P$ is  a strongly discrete valuation ring for each maximal ideal $P$. 
\end{theorem}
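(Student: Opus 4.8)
The plan is to establish the two conclusions separately, after first recording that $R$ is arithmetical. Since an IF-ring is coherent and self FP-injective (\cite[Theorem 2]{Col75}), the hypothesis that $Q(R/A)$ is an IF-ring for every ideal $A$ makes $R$ fractionally self FP-injective; hence by \cite[Theorem 1]{FaFa97} $R$ is arithmetical, so $R_P$ is a valuation ring for each maximal ideal $P$. This fact will only be needed for the second assertion.

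For the first assertion I would fix an ideal $A$, put $\bar R = R/A$, and note that $Q(\bar R)$, being an IF-ring, is self FP-injective and coherent. Then I apply Corollary~\ref{C:semicoh} to the ring $\bar R$: its ring of quotients $Q(\bar R)$ is self FP-injective, so $\bar R$ is semicoherent if and only if $Q(\bar R)$ is coherent --- and it is. Therefore $R/A$ is semicoherent.

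For the second assertion I fix a maximal ideal $P$, so that $R_P$ is a valuation ring, and use Corollary~\ref{C:StrDis}: it is enough to show $R_P/\mathfrak{A}$ is semicoherent for every ideal $\mathfrak{A}$ of $R_P$. Writing $\mathfrak{A} = BR_P$ with $B = \mathfrak{A}\cap R$, one has $R_P/\mathfrak{A} = (R/B)_{\bar P}$, which is a localization of $R/B$ (at the image of the multiplicative set $R\setminus P$). The first assertion gives that $R/B$ is semicoherent, and since semicoherence passes to localizations by \cite[Proposition 1.2]{Mat85}, $R_P/\mathfrak{A}$ is semicoherent. Corollary~\ref{C:StrDis} then yields that $R_P$ is strongly discrete.

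The point requiring the most care is the localization step: I need stability of semicoherence under localization --- not merely under passage to the total ring of quotients --- in order to pass from the already-established semicoherence of $R/B$ to that of $(R/B)_{\bar P}$, together with the elementary facts that every ideal of $R_P$ is extended from $R$ and that $R_P/\mathfrak{A}\cong (R/B)_{\bar P}$. Checking the hypotheses of the two corollaries (self FP-injectivity of the pertinent rings of quotients, and that $R_P$ is a valuation ring) is routine given Facchini--Faith and \cite[Theorem 2]{Col75}.
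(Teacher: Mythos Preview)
Your proof is correct and follows the paper's strategy closely: the first assertion is exactly the paper's implicit use of Corollary~\ref{C:semicoh}, and for the second you reduce, via Corollary~\ref{C:StrDis}, to showing each $R_P/\mathfrak A$ is semicoherent. The only difference is in how you justify this last point. The paper observes directly that $Q(R_P/\mathfrak A)=Q(R/B)$ (since $B=\ker(R\to R_P/\mathfrak A)$ makes $R/B\hookrightarrow R_P/\mathfrak A$ injective, so every element of the image of $R\setminus P$ is regular in $R/B$, and one checks that regular elements of $R/B$ and of $R_P/\mathfrak A$ correspond); thus $Q(R_P/\mathfrak A)$ is IF by hypothesis, and Corollary~\ref{C:semicoh} (which is what underlies Corollary~\ref{C:quotval}) gives semicoherence of $R_P/\mathfrak A$ immediately. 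Your route instead invokes the first assertion to get $R/B$ semicoherent and then appeals to stability of semicoherence under localization. Both are valid; the paper's version is a shade more economical in that it avoids the extra appeal to \cite[Proposition~1.2]{Mat85} for general multiplicative sets, while yours has the advantage of reusing the first assertion rather than reproving it locally.
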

\begin{proof}  By \cite[Theorem 1]{FaFa97} $R$ is arithmetical because it  is fractionally self FP-injective. Let $P$ be a maximal ideal and let $A$ be an ideal of $R_P$. If $B$ is the kernel of the following composition of natural maps $R\rightarrow R_P\rightarrow R_P/A$, then $Q(R_P/A)=Q(R/B)$ is an IF-ring. We conclude by Corollaries~\ref{C:quotval} and \ref{C:StrDis}.
 \end{proof} 

\medskip

 It is obvious that each von Neumann regular ring is  a fractionally IF-ring. Moreover:
\begin{proposition}
\label{P:Examp} Let $R$ be an arithmetical ring which is locally strongly discrete. Then $R$ is a fractionally IF-ring in the following cases:
\begin{enumerate}
\item $R$ is fractionally semilocal;
\item $R$ is semilocal;
\item $R$ is a Pr\"ufer domain of finite character, i.e. each non-zero element is contained in but a finite number of maximal ideals.
\end{enumerate} 
\end{proposition}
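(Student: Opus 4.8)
The plan is to reduce each case to Theorem~\ref{T:fracIF}'s hypothesis, namely that $Q(R/A)$ is an IF-ring for every ideal $A$. Since $R$ is arithmetical, so is $R/A$, hence $Q(R/A)$ is a von Neumann regular ring tensored with valuation rings along its maximal spectrum; more precisely $Q(R/A)$ is arithmetical and its non-units are all zero-divisors, so it is self FP-injective by the localization-to-valuation argument (as in the proof of Corollary~\ref{C:StrDis}, invoking \cite[Th\'eor\`eme 2.8]{Cou82} locally). Thus in each case the only thing left to verify is that $Q(R/A)$ is \emph{coherent}, since by \cite[Theorem 2]{Col75} an IF-ring is exactly a coherent self FP-injective ring. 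For coherence of $Q(R/A)$ it suffices (again because an arithmetical ring is coherent iff each principal ideal is finitely presented, by \cite[Corollary 1.11]{ShWi74}, and because over the total quotient ring every non-unit is a zero-divisor) to control the annihilators of elements, which localize well: $Q(R/A)$ is coherent iff $Q(R/A)_{\mathfrak m}$ is coherent for each maximal ideal $\mathfrak m$ and the supports behave finitely.

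The key local computation is this: for a maximal ideal $P$ of $R$, the localization $(R/A)_{P}$ is a quotient of the valuation ring $R_P$, which is strongly discrete by hypothesis; hence by Corollary~\ref{C:StrDis} (or directly Corollary~\ref{C:quotval}) $Q((R/A)_P) = Q(R_P/AR_P)$ is a coherent self FP-injective valuation ring, i.e. an IF valuation ring. So all the \emph{stalks} of $Q(R/A)$ are IF-rings; the remaining issue is to glue these local IF properties into a global coherence statement for $Q(R/A)$, and this is where the three cases differ.

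In case $(i)$, $R$ fractionally semilocal means $Q(R/A)$ is semilocal for every $A$; a semilocal arithmetical ring which is locally an IF valuation ring is a finite product of IF valuation rings after passing to the total quotient ring (the idempotents separating the finitely many maximal ideals of $Q(R/A)$ already lie in $Q(R/A)$), hence is coherent and self FP-injective, hence IF; then Theorem~\ref{T:fracIF} applies. Case $(ii)$ is immediate from $(i)$ since semilocal clearly implies fractionally semilocal (a quotient of a semilocal ring is semilocal, and localizing at the multiplicative set of regular elements only collapses the maximal spectrum). Case $(iii)$ is the substantive one: for a Pr\"ufer domain of finite character, a nonzero ideal $A$ is contained in only finitely many maximal ideals, so $R/A$ — and hence $Q(R/A)$ — is semilocal, reducing again to case $(i)$; the case $A=0$ gives $Q(R)=K$, a field. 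The main obstacle is precisely verifying that finite character passes to $R/A$, i.e. that a nonzero proper ideal $A$ meets only finitely many maximal ideals and that $R/A$ is therefore semilocal with the localized strongly discrete property preserved — once that is in hand, $(iii)$ collapses to $(i)$ and the theorem follows.
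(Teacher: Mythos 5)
There is a genuine gap at the heart of your case $(i)$, and it propagates to $(ii)$ and $(iii)$. You assert that, after passing to the total quotient ring, a semilocal arithmetical ring which is locally an IF valuation ring becomes a finite product of IF valuation rings because ``the idempotents separating the finitely many maximal ideals of $Q(R/A)$ already lie in $Q(R/A)$.'' No such idempotents need exist: a semilocal arithmetical ring equal to its own ring of quotients can be indecomposable with several maximal ideals. For instance, let $V,W$ be two incomparable strongly discrete valuation domains with the same fraction field $K$ (say $\mathbb{Z}_{(2)}$ and $\mathbb{Z}_{(3)}$ in $\mathbb{Q}$), put $D=V\cap W$ and $T=D\ltimes K/D$. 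Then $T$ is arithmetical (locally it is $V\ltimes K/V$, resp. $W\ltimes K/W$), locally strongly discrete, semilocal with two maximal ideals, its only idempotents are $0$ and $1$, and every non-unit is a zero-divisor: for a non-zero non-unit $m$ of $D$ one has $\frac{1}{m}\notin D$ and $(0,\frac{1}{m}+D)\,(m,f)=(0,1+D)=0$. So $T=Q(T)$ satisfies the hypotheses of case $(ii)$ and refutes your decomposition step. The related omission is the global-to-local passage: your local computation produces $Q\bigl((R/A)_P\bigr)$, whereas the localization of $T=Q(R/A)$ at one of its maximal ideals is $(R/A)_L$ for a suitable prime $L$, and it is not automatic that every non-unit of $(R/A)_L$ is a zero-divisor just because this holds in $T$ (annihilators can vanish under localization). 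That step is exactly \cite[Lemma 7]{FaFa97}, which uses semilocality in an essential way and which you never invoke or prove.

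The paper's route avoids both problems: it reduces to $R=Q(R)$ semilocal, applies \cite[Lemma 7]{FaFa97} to get $R_P=Q(R_P)$ for each maximal ideal $P$, concludes that each $R_P$ is a valuation ring whose non-units are zero-divisors --- hence self FP-injective by \cite[Th\'eor\`eme 2.8]{Cou82} and coherent by Corollary~\ref{C:StrDis} together with Proposition~\ref{P:FPinj} --- and then, instead of decomposing $R$, descends the IF property along the faithfully flat \emph{finite} product $\prod_{P\in\mathrm{Max}\,R}R_P$. Note also that your opening claim, that an arithmetical ring all of whose non-units are zero-divisors is automatically self FP-injective, is unproved and is essentially what the proposition is establishing in this setting; by contrast, the point you single out as ``the main obstacle'' in $(iii)$ (that $R/A$ is semilocal for $A\neq 0$ when $R$ has finite character) is immediate.
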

\begin{proof} $(i)$. We may assume that $R=Q(R)$. By \cite[Lemma 7]{FaFa97}, for each maximal ideal $P$, $R_P=Q(R_P)$. It follows that $R_P$ is self FP-injective. Moreover it is coherent by Corollary~\ref{C:StrDis} and Proposition~\ref{P:FPinj}. Since $\Pi_{P\in\mathrm{Max}\ R}R_P$ is a faithfully flat $R$-module and an IF-ring, we deduce that $R$ is IF too. 

$(ii)$ follows from $(i)$ by \cite[Lemma 5]{FaFa97}.

$(iii)$ follows from $(ii)$ since $R/A$ is semilocal for each  non-zero ideal $A$.  \end{proof}

\begin{question}
What are the locally strongly discrete Pr\"ufer domains which are fractionally IF?
\end{question}


The following example shows that  an arithmetical ring which is locally  Artinian  is not necessarily fractionally IF.
\begin{example}\label{E:nonQIF}
Let $K$ be a field, $V=K[X]/(X^2)$ where $X$ is an indeterminate and let $x$ be the image of $X$ in $V$. For each $p\in\mathbb{N}$ we put $R_{2p}=V$ and $R_{2p+1}=V/xV\cong K$. Let $S=\prod_{n\in\mathbb{N}}R_n$, $J=\bigoplus_{n\in\mathbb{N}}R_n$ and let $R$ be the unitary $V$-subalgebra of $S$ generated by $J$. For each $n\in\mathbb{N}$ we set $\mathbf{e}_n=(\delta_{n,p})_{p\in\mathbb{N}}$ and we denote by $\mathbf{1}$ the identity element of $R$. Let $P$ be a maximal ideal of $R$:
\begin{itemize}
\item either $J\subseteq P$; in this case $P=P_{\infty}=J+xR$ and $R_{P_{\infty}}=R/J\cong V$;
\item or $\exists n\in\mathbb{N}$ such that $\mathbf{e}_n\notin P$; in this case $P=P_n=R(\mathbf{1}-\mathbf{e}_n)+Rx\mathbf{e}_n$, $R_{P_n}=R/R(\mathbf{1}-\mathbf{e}_n)\cong V$ if $n$ is even and $R_{P_n}\cong K$ if $n$ is odd.
\end{itemize} 
Then, for each maximal ideal $P$, $R_P$ is an artinian valuation ring. Now it is easy to check that $(0:x\mathbf{1})=Rx\mathbf{1}+\bigoplus_{n\in\mathbb{N}}R\mathbf{e}_{2n+1}$. So, $R$ is not coherent.
\end{example}

\medskip

The following proposition is a short answer to another question posed by Facchini and Faith in \cite[question Q3 p.301]{FaFa97}.
\begin{proposition}
There exists a non-arithmetical zero-Krull-dimensional ring $R$ which is finitely fractionally self FP-injective.
\end{proposition}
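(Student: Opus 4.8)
The plan is to construct $R$ as a suitable subring of an infinite product of Artinian local rings, mimicking the structure of Example~\ref{E:nonQIF} but forcing self FP-injectivity of every proper quotient of quotients while destroying arithmeticity at one special maximal ideal. Concretely, I would start from a ring $V$ which is self-injective Artinian local but \emph{not} a valuation ring --- for instance $V = K[X,Y]/(X,Y)^2$ over a field $K$, or more simply $V = K[X,Y]/(X^2,XY,Y^2)$, whose maximal ideal $\mathfrak{m}=(x,y)$ satisfies $\mathfrak{m}^2=0$ and $\dim_K \mathfrak{m}=2$, so that $V$ is a zero-dimensional self-injective (hence self FP-injective) local ring that is not chained. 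Then set $R_n = V$ for all $n\in\mathbb{N}$, let $S=\prod_{n\in\mathbb{N}}R_n$, $J=\bigoplus_{n\in\mathbb{N}}R_n$, and take $R$ to be the unitary $V$-subalgebra of $S$ generated by $J$, exactly as in Example~\ref{E:nonQIF}.

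The key steps, in order, would be: (1) identify $\mathrm{Max}\ R$, showing that every maximal ideal $P$ is either $P_\infty=J+\mathfrak{m}R$ with $R_{P_\infty}\cong V$, or $P_n=R(\mathbf{1}-\mathbf{e}_n)+\mathfrak{m}\mathbf{e}_n$ with $R_{P_n}\cong V$; since $V$ is not a valuation ring, $R$ is not locally a valuation ring at any maximal ideal, so $R$ is not arithmetical. (2) Check that $R$ is zero-Krull-dimensional: each prime ideal is one of the $P_n$ or $P_\infty$ (every prime contains $J$ or misses some $\mathbf{e}_n$), and all are maximal. (3) Prove that for every proper ideal $A$ of $R$, the classical quotient ring $Q(R/A)$ is self FP-injective. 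Here I would argue that $R/A$ is a subring of $\prod_n (R_n/A_n)$ for the componentwise ideals $A_n$, that almost all components $R_n/A_n$ equal $V$ unless $A$ meets $J$ substantially, and reduce to the fact that a finite product of self FP-injective Artinian local rings is self FP-injective, together with the observation that $Q(R/A)$ localizes away the "generic" direction and leaves a finite product of the Artinian stalks. In the degenerate cases where $A\subseteq J$, one checks $R/A$ itself already has $Q(R/A)=R/A$ or that the quotient is again of the same "$V$-algebra generated by a restricted product" shape, for which self FP-injectivity of the total quotient follows from the self-injectivity of $V$.

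The main obstacle I expect is step (3): verifying that $Q(R/A)$ is self FP-injective uniformly over \emph{all} proper ideals $A$, not just the finitely generated ones, since $R$ is badly non-coherent and the ideal structure of this restricted-product ring is intricate. The delicate point is controlling which elements become regular in $R/A$ --- an element is regular iff its image is regular in every stalk --- and showing the total quotient ring collapses to a finite product of the Artinian local stalks $V$ (or $K$) at the maximal ideals \emph{not} containing the relevant idempotent support, so that FP-injectivity reduces to the Artinian self-injective case via \cite[Theorem~1]{FaFa97}-style local-global reasoning. I would handle this by splitting on whether $A+J=R$ (finitely many stalks survive, direct product argument) or $A\subseteq$ some $P_n$ in a way that leaves infinitely many $V$-components, and in the latter case exhibit $Q(R/A)$ explicitly as again a restricted-product $V$-algebra whose total quotient ring is FP-injective because $V$ is.
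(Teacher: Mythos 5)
Your proposal has two fatal problems. First, the building block is wrong: $V=K[X,Y]/(X,Y)^2$ is \emph{not} self-injective. Its socle is the whole maximal ideal $(x,y)$, which is $2$-dimensional over $K$, so $V$ is not Gorenstein; and since $V$ is Noetherian, FP-injective coincides with injective, so $V$ is not even self FP-injective. This sinks your example already at the finitely generated ideal $A=0$: in your ring $R$ every element with all components units is invertible and every other element is a zero-divisor, so $Q(R)=R$, and $R\cong V\times R(\mathbf{1}-\mathbf{e}_0)$ has $V$ as a ring direct factor, whence $R$ is not self FP-injective. Second, your step (3) aims to prove that $Q(R/A)$ is self FP-injective for \emph{every} proper ideal $A$, i.e.\ that $R$ is fractionally self FP-injective; but by \cite[Theorem 1]{FaFa97} (quoted in Section~\ref{S:IF}) every fractionally self FP-injective ring is arithmetical, so success in step (3) would contradict the non-arithmeticity you establish in step (1). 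The proposition is precisely about the gap between ``fractionally'' and ``finitely fractionally'': you must restrict to finitely generated ideals, and indeed no example of the kind you describe can satisfy the unrestricted condition.

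The paper's construction avoids both traps by keeping every ``finite'' stalk a self-injective valuation ring and hiding the non-arithmetical behaviour only at infinity. It takes $V=K[X]/(X^2)$, $S=V^{\mathbb{N}}$, $J=V^{(\mathbb{N})}$, and two elements $y,z\in S$ supported alternately on even and odd coordinates, and lets $R$ be the $V$-subalgebra generated by $y$, $z$ and $J$. Then $R_{P_n}\cong V$ is a valuation ring for every $n$, while $R_{P_\infty}=R/J\cong K[Y,Z]/(Y,Z)^2$ is not, so $R$ is zero-dimensional and non-arithmetical. For a finitely generated ideal $A$ one shows $R$ is pure in $S$, hence $R/A$ is pure in $S/SA\cong\prod_{n}(R/A)_{P_n}$, a product of self-injective artinian valuation ring quotients, which is an injective $(R/A)$-module; a pure submodule of an injective module is FP-injective, and in a self FP-injective ring regular elements are units, so $Q(R/A)=R/A$ is self FP-injective. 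Your componentwise-product heuristic in step (3) points in this direction, but without the purity argument and with the non-valuation stalk repeated at every coordinate it cannot be made to work.
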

\begin{proof} Let $V$ be the artinian valuation ring of Example~\ref{E:nonQIF}, $S=V^{\mathbb{N}}$ and $J=V^{(\mathbb{N})}$. Let $y=(y_n)_{n\in\mathbb{N}},\ z=(z_n)_{n\in\mathbb{N}}\in S$ such that, $\forall p\in\mathbb{N},\ y_{2p}=z_{2p+1}=x$ and $y_{2p+1}=z_{2p}=0$, and let $R$ be the unitary $V$-subalgebra of $S$ generated by $y,z$ and $J$. The idempotents $(\mathbf{e}_n)_{n\in\mathbb{N}}$ are defined as in Example~\ref{E:nonQIF}.
Let $P\in\mathrm{Max}\ R$:
\begin{itemize}
\item either $J\subseteq P$; in this case $P=P_{\infty}=J+yR+zR$ and $R_{P_{\infty}}=R/J\cong K[Y,Z]/(Y,Z)^2$;
\item or $\exists n\in\mathbb{N}$ such that $\mathbf{e}_n\notin P$; in this case $P=P_n=R(\mathbf{1}-\mathbf{e}_n)+Rx\mathbf{e}_n$, $R_{P_n}=R/R(\mathbf{1}-\mathbf{e}_n)\cong V$.
\end{itemize} 
Clearly, $R_{P_{\infty}}$ is not a valuation ring. So, $R$ is not arithmetical. First, we show that $R$ is a pure submodule of $S$. It is sufficient to prove that $R_P$ is a pure submodule of $S_P$ for each maximal ideal $P$. It is obvious that $R_{P_n}\cong  S_{P_n}\cong\mathbf{e}_nS\cong V$. It remains to be shown that $R_{P_{\infty}}$ is a pure submodule of $S_{P_{\infty}}\cong S/J$. We consider the following equations:
\[\forall i,\ 1\leq i\leq p, \sum_{1\leq j\leq m}r_{i,j}x_j\equiv s_i\ \mathrm{modulo}\ J,\]
where $r_{i,j},\ s_i\in R,\ \forall i,\ 1\leq i\leq p,\ \forall j,\ 1\leq j\leq m$. When these equations have a solution in $S$, we must prove they have  a solution in $R$ too. This can be done by using the basis $\{\mathbf{1},y,z,\mathbf{e}_n,x\mathbf{e}_n\mid n\in\mathbb{N}\}$ of $R$ over $K$. 
Consequently $R$ is pure in $S$. Now, let $A$ be a finitely generated ideal of $R$. Then $R/A$ is a pure submodule of $S/SA$. We have $S/SA\cong\prod_{n\in\mathbb{N}}(R/A)_{P_n}$. For each $n\in\mathbb{N}$, $(R/A)_{P_n}$ is self injective, whence it is an injective $(R/A)$-module. We deduce that $S/SA$ is  injective over $R/A$. Hence $R/A$ is self FP-injective.  \end{proof}

\medskip

Finally, for the second question posed by Facchini and Faith in \cite[question Q2 p.301]{FaFa97}, we shall prove Theorem~\ref{T:ffIF}. The following lemma is needed.
\begin{lemma}
Let $R$ be a clean ring such that $R=Q(R)$ and $(0:a)$ is finitely generated for each $a\in R$. Then, for each maximal ideal $P$, $R_P=Q(R_P)$.
\end{lemma}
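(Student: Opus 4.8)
The plan is to prove that every regular element of $R_P$ is a unit of $R_P$. Since each element of $R_P$ is of the form $a/u$ with $a\in R$ and $u\in R\setminus P$, and $u/1$ is already a unit, it suffices to show: if $a\in R$ and $a/1$ is regular in $R_P$, then $a\notin P$. I will use throughout that a clean ring is Gelfand with $\mathrm{Max}\ R$ totally disconnected (by the references quoted after Proposition~\ref{P:Gel}), so that $\mathrm{Max}\ R$ is a compact Hausdorff zero-dimensional (Boolean) space and, combining Corollary~\ref{C:connected} with Proposition~\ref{P:Gel}, its clopen subsets are exactly the sets $D(e)\cap\mathrm{Max}\ R$ with $e\in\mathrm{B}(R)$.

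The first step is to control the annihilator of $a$. If $a/1$ is regular in $R_P$, then for every $b\in(0:a)$ we have $(a/1)(b/1)=(ab)/1=0$, hence $b/1=0$; thus $(0:a)\subseteq 0_P$. Since $0_P=\bigcup_{s\notin P}(0:s)$ is a directed union of ideals and $(0:a)$ is finitely generated by hypothesis, there is a single element $s\in R\setminus P$ with $s\,(0:a)=0$.

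The second step is to manufacture an idempotent out of $s$. Since $s\notin P$, the closed subsets $V(s)\cap\mathrm{Max}\ R$ and $\{P\}$ of $\mathrm{Max}\ R$ are disjoint, so there is a clopen subset of $\mathrm{Max}\ R$ containing the former but not $P$; writing it as $D(e)\cap\mathrm{Max}\ R$ with $e\in\mathrm{B}(R)$ gives an idempotent $e\in P$ with $V(e)\cap V(s)=\emptyset$, i.e. $Re+Rs=R$. For an idempotent $e$ this forces $1-e\in Rs$ (multiply a relation $re+r's=1$ by $1-e$), so $(1-e)\,(0:a)=0$, i.e. $(0:a)\subseteq(0:(1-e))=Re$. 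Then $a(1-e)+e$ is a regular element of $R$, by the same computation as in the proof of $(iii)\Rightarrow(ii)$ of Theorem~\ref{T:hausd}: if $(a(1-e)+e)d=0$, multiplying by $e$ gives $ed=0$ since $e(a(1-e)+e)=e$, whence $a(1-e)d=0$, so $(1-e)d\in(0:a)\subseteq Re$, hence $(1-e)d=0$, and finally $d=0$. Since $R=Q(R)$, the element $a(1-e)+e$ is a unit of $R$. Lastly, $e\in P$ idempotent implies $e\in 0_P$, so $e/1=0$ in $R_P$ and $(a(1-e)+e)/1=a/1$; therefore $a/1$ is a unit of $R_P$ and $a\notin P$, as wanted.

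The step I expect to be the real obstacle is the second one: passing from ``$a$ becomes regular after localizing at $P$'' to a genuine regular element of $R$ that still represents $a/1$ in $R_P$. The bridge is the idempotent $e$, which cuts off the whole (finitely generated) annihilator of $a$ at once, and whose existence relies precisely on the total disconnectedness of $\mathrm{Max}\ R$ --- equivalently, on the abundance of idempotents forced by cleanness. The two remaining hypotheses each enter exactly once: finite generation of $(0:a)$ is what lets a single idempotent do the job, and $R=Q(R)$ is what turns the constructed regular element into a unit.
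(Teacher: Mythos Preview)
Your proof is correct and follows essentially the same route as the paper: reduce to $(0:a)\subseteq 0_P$, use finite generation of $(0:a)$ together with the abundance of idempotents in a clean ring to find an idempotent $e\in P$ with $(0:a)\subseteq Re$, and then observe that $a(1-e)+e$ is a regular element of $R$ lying in $P$, contradicting $R=Q(R)$. The only cosmetic difference is that the paper invokes directly that $0_P$ is generated by idempotents (via \cite[Proposition~III.1]{Cou07}), whereas you obtain the needed idempotent by a hands-on separation argument in the Boolean space $\mathrm{Max}\ R$; the two are equivalent here.
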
 \label{L:CleFrac}
\begin{proof}
Let $P$ be a maximal ideal of $R$. By way of contradiction, suppose that $R_P$ contains a regular element which is not a unit. So,  $\exists a\in P$ such that $(0_P:a)=0_P$ (since $R$ is clean, $R_P=R/0_P$ by \cite[Proposition III.1]{Cou07}). It follows that $(0:a)\subseteq 0_P$. Since $(0:a)$ is finitely generated and $0_P$ is generated by idempotents, there exists an idempotent $e\in 0_P$ such that $(0:a)\subseteq Re$. Now, it is easy to check that $a(1-e)+e$ is a regular element contained in $P$. This contradicts that $R=Q(R)$.
\end{proof}

\begin{theorem} \label{T:ffIF} The following assertions hold:
\begin{enumerate}
\item let $R$ be an almost clean coherent arithmetical ring. Assume that $R/A(x)$ is either torch or local or a domain $\forall x\in\mathrm{pSpec}\ R$. Then $R$ is finitely fractionally IF;
\item each clean coherent arithmetical ring is finitely fractionally IF;
\item each semihereditary ring is finitely fractionally IF;
\item let $R$ be a zero-Krull-dimensional ring or a one-Krull-dimensional domain. Then $R$ is finitely fractionally IF if and only if $R$ is coherent and arithmetical.
\end{enumerate} 
\end{theorem}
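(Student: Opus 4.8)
The plan is to derive $(ii)$ and $(iii)$ from $(i)$, to prove $(i)$ by a local--global argument over $\mathrm{pSpec}\ R$ resting on Lemma~\ref{L:poly}, and to prove $(iv)$ by reducing it to the zero-dimensional case. For the reductions: if $R$ is clean then $R$ is almost clean, and each stalk $R/A(x)$ is clean and, being indecomposable by Corollary~\ref{C:connected}, is local (in a clean indecomposable ring $a$ or $a-1$ is a unit for every $a$); so a clean coherent arithmetical ring satisfies the hypotheses of $(i)$ with all stalks local, and $(ii)$ follows from $(i)$. If $R$ is semihereditary then $R$ is a PP-ring, hence almost clean by \cite{McG03}, and coherent, arithmetical and reduced; since each $x\in\mathrm{pSpec}\ R$ contains a unique minimal prime $L_x$ and $A(x)=\bigcap_{P\in x}0_P=L_x$ when $R$ is reduced, every stalk $R/A(x)=R/L_x$ is a domain, and $(iii)$ follows from $(i)$.

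For $(i)$, fix a finitely generated ideal $A$ and put $S=R/A$. Since $A$ is finitely generated and $R$ is coherent, $S$ is coherent, hence semicoherent; by Corollary~\ref{C:semicoh} it therefore suffices to prove $Q(S)$ is self FP-injective, for then $Q(S)$ is coherent and hence an IF-ring by \cite{Col75}. Because $R$ is almost clean, $\mathrm{pSpec}\ R$ is totally disconnected (Theorem~\ref{T:hausd}) and each $A(x)$ is generated by idempotents. Self FP-injectivity of $Q(S)$ means $Q(S)$ is a pure submodule of every overmodule $M$, i.e.\ every finite $R$-linear system with right-hand sides in $Q(S)$ which is solvable in $M$ is solvable in $Q(S)$. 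Given such a system and $M$: applying the exact functor $-\otimes_R R/A(x)$ (note $R/A(x)$ is flat) to $Q(S)\hookrightarrow M$ keeps it injective, so $Q(S)/A(x)Q(S)$ is a submodule of $M/A(x)M$; granting (next paragraph) that $Q(S)/A(x)Q(S)$ is self FP-injective, it is then a pure submodule, so the image of the system is solvable in $Q(S)/A(x)Q(S)$ for every $x\in\mathrm{pSpec}\ R$. Reading this as a family of polynomial equations over the $R$-algebra $Q(S)$ that is solvable modulo $A(x)Q(S)$ for every $x$, Lemma~\ref{L:poly} produces a solution in $Q(S)$. Hence $Q(S)$ is self FP-injective.

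It remains to check that the ``stalk'' $Q(S)/A(x)Q(S)$ is self FP-injective. As $A(x)$ is generated by idempotents this is a quotient of $Q(S)$ by an ideal generated by idempotents, and, following the computation in the proof of Corollary~\ref{C:FracClean}, it is the total quotient ring of $(R/A(x))/\bar A$, where $\bar A$ is the (finitely generated) image of $A$ in $B:=R/A(x)$. If $B$ is a domain then $B/\bar A$ is a finitely generated quotient of a Pr\"ufer domain, hence coherent, and its localizations at maximal ideals have the form $B_{\mathfrak n}/aB_{\mathfrak n}$ with $B_{\mathfrak n}$ a valuation domain; such rings are valuation rings in which every non-unit is a zero-divisor, hence self FP-injective by \cite{Cou82} (and coherent). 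Since over a coherent ring $\mathrm{Ext}^1$ against finitely presented modules localizes, ``coherent plus locally self FP-injective'' implies ``self FP-injective''; thus $B/\bar A$ is self FP-injective, and a self FP-injective ring equals its own total quotient ring, so $Q(B/\bar A)=B/\bar A$ is self FP-injective. If $B$ is local then $B/\bar A$ is a valuation ring and $Q(B/\bar A)$, being the total quotient ring of a valuation ring, is a valuation ring whose non-units are zero-divisors, hence self FP-injective by \cite{Cou82}; if $B$ is torch one splits according to whether $\bar A\subseteq N$ (then $Q(B/\bar A)$ is a valuation ring, as in Corollary~\ref{C:fracValu}) or not (then $B/\bar A$ is a finitely generated quotient of the Pr\"ufer domain $B/N$, treated as above). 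This is the technical heart, and the main obstacle: one must pin down $Q(S)/A(x)Q(S)$ precisely and verify self FP-injectivity in each of the three cases, using the valuation-ring theorem of \cite{Cou82}, the local behaviour of FP-injectivity over coherent rings, and the finite generation of $A$ (hence of $\bar A$).

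Finally, for $(iv)$: if $R$ is finitely fractionally IF then it is finitely fractionally self FP-injective, hence arithmetical by \cite[Theorem~1]{FaFa97}; and it is coherent, for a zero-dimensional ring is its own total quotient ring (every non-unit lies in a minimal prime, so is a zero-divisor) and hence IF, while for a one-dimensional domain $R$ each $R/aR$ ($0\ne a$) is IF, hence coherent, whence $R$ is coherent ($(0:a)=0$, and intersections of finitely generated ideals descend from the coherent rings $R/aR$). Conversely, if $R$ is coherent, arithmetical and zero-dimensional, then $\mathrm{Spec}\ R=\mathrm{pSpec}\ R$ is a Boolean space and the stalks $R/A(x)=R_P$ are coherent zero-dimensional valuation rings whose non-units are nilpotent, hence self FP-injective by \cite{Cou82}; so the patching argument of $(i)$ (with $A=0$) shows that $R$ — and likewise every $R/A$ with $A$ finitely generated, again coherent, arithmetical and zero-dimensional — is self FP-injective, hence IF, so $R$ is finitely fractionally IF. For a one-dimensional coherent arithmetical domain $R$, $Q(R)$ is a field and $R/A$ is coherent, arithmetical and zero-dimensional for each nonzero finitely generated ideal $A$, so the converse direction of $(iv)$ reduces to the zero-dimensional case.
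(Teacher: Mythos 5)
Your reductions of (ii) and (iii) to (i) are fine, but there are two genuine problems. The clearest one is in (iv): you deduce arithmeticity from ``finitely fractionally self FP-injective, hence arithmetical by \cite[Theorem 1]{FaFa97}''. That theorem is about \emph{fractionally} self FP-injective rings (the hypothesis is imposed for \emph{every} ideal), and the finitely-fractional analogue is false: this very paper constructs a zero-Krull-dimensional ring which is finitely fractionally self FP-injective but not arithmetical (the proposition answering question Q3). So in (iv) the arithmeticity must be extracted from the full IF hypothesis (FP-injectivity \emph{plus} coherence), as the paper does: reduce to $R$ local via $R_P/A\cong (R/B)_P$, and if $a,b$ are incomparable use that $R/(Ra\cap Rb)$ is IF together with Jain's equality $A=(0:(0:A))$ \cite[Corollary 2.5]{Ja73} to get a contradiction. (Your coherence argument in (iv) is essentially correct.)

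For (i) your route is genuinely different from the paper's: you patch self FP-injectivity of $Q(R/A)$ over $\mathrm{pSpec}\ R$ by Lemma~\ref{L:poly} and then get coherence from Corollary~\ref{C:semicoh}, whereas the paper splits $R/A$ by an idempotent into a factor with $(0:A)\subseteq N$, which is IF by a citation, and a factor $Re/Ae$ whose quotient ring is shown to be clean and locally IF via Corollary~\ref{C:FracClean} and Lemma~\ref{L:CleFrac}. Your plan can be made to work, but the step you yourself label the technical heart is left unproved and does not follow from Corollary~\ref{C:FracClean}: that corollary only shows $R/A(x)$ and $Q/QA(x)$ have the \emph{same} total quotient ring, whereas you need that $Q(S)/A(x)Q(S)$ (with $S=R/A$) \emph{is} $Q\bigl(S/A(x)S\bigr)$; a priori it is only the localization of $S/A(x)S$ at the images of the regular elements of $S$, which could be a proper subring, and then the self FP-injectivity (hence the purity step feeding Lemma~\ref{L:poly}) would not be available. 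The gap can be closed by exactly the mechanism of Lemma~\ref{L:CleFrac}: if $\bar t$ is regular in $S/A(x)S$, lift it to $t\in S$; then $(0_S:t)\subseteq A(x)S$, this annihilator is finitely generated because $S$ is coherent, and $A(x)S$ is generated by idempotents, so $(0_S:t)\subseteq eS$ for some idempotent $e\in A(x)S$; then $t(1-e)+e$ is regular in $S$ and congruent to $t$ modulo $A(x)S$, so every regular element of the stalk becomes invertible in $Q(S)/A(x)Q(S)$ and the identification holds. You should also justify, in the torch case, that a finitely generated ideal not contained in the minimal prime $N$ must contain $N$ (true in an arithmetical ring with unique minimal prime, by comparing ideals locally), which is what legitimizes passing to the Pr\"ufer domain $B/N$.
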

\begin{proof}
 
$(i)$. Let $A$ be a finitely generated ideal  of an almost clean coherent arithmetical ring $R$ and let $N$ be the nilradical of $R$. Since each prime ideal contains only one minimal prime, by \cite[Lemme IV.2]{Cou07} $D((N:A))$ is the inverse image by $\lambda_R$ of an open subset $U$ of $\mathrm{pSpec}\ R$. For each $x\in U$ there exists an idempotent $e_x\in (N:A)$ such that $x\subseteq D(e_x)\subseteq D((N:A))$. Hence $(N:A)=\Sigma_{x\in U}(Re_x+N)$. Since $R$ is coherent $(0:A)$ is finitely generated. It follows that there exists an idempotent $e\in (N:A)$ such that $(0:A)\subseteq (Re+N)$. We have $R/A\cong (R(1-e)/A(1-e))\times(Re/Ae)$. Since $(0:_{R(1-e)}A(1-e))\subseteq N(1-e)$, $(R(1-e)/A(1-e))$ is IF by \cite[Proposition II.15]{Cou03}. On the other hand, $Re$ is an almost coherent arithmetical ring and $Ae$ is a finitely generated ideal contained in $Ne$. Let $T=(Re/Ae)$ and let $\phi:R\rightarrow T$ be the natural epimorphism. Then $\mathrm{pSpec}\ T$ is totally disconnected because it is homeomorphic to $\mathrm{pSpec}\ (Re)$.  Let $x\in\mathrm{pSpec}\ T$. Then $T/A(x)$ is the quotient of $R/A(^b\phi(x))$ modulo an ideal contained in the minimal prime  of $R/A(^b\phi(x))$. By Corollary~\ref{C:fracValu} $T/A(x)$ is almost clean. We deduce that $T$ is coherent and almost clean by  Theorem~\ref{T:hausd}. By Corollary~\ref{C:FracClean} $T'=Q(T)$ is clean. By Lemma~\ref{L:CleFrac} $T'_P=Q(T'_P)$ for each maximal ideal $P$ of $T'$. We deduce that $T'_P$ is IF because it is a valuation ring. So, since $T'$ is locally IF, it is IF too. Hence $Q(R/A)$ is IF. 
 
$(ii)$ and $(iii)$. If $R$ is either clean, coherent and arithmetical or semihereditary, then $R$ satisfies the conditions of $(i)$. Hence $R$ is finitely fractionally IF.

$(iv)$. Observe that $Q(R/A)=R/A$ for each non-zero proper ideal $A$.

First assume that $R$ is arithmetical and coherent. We deduce that $R$ is finitely fractionally IF from $(ii)$ and $(iii)$.

Conversely, let $P$ be a maximal ideal of $R$ and  $A$  a finitely generated ideal of $R_P$. There exists a finitely generated ideal $B$ of $R$ such that $A=B_P$. So, $R_P/A\cong(R/B)_P$. Since $R/B$ is IF, so is $R_P/A$ by \cite[Proposition 1.2]{Cou82}. Hence we may assume that $R$ is local and we must prove that $R$ is a valuation ring. If not, there exist $a,b\in R$ such that $a\notin Rb$ and $b\notin Ra$. The coherence of $R/(ab)$ implies that $Ra\cap Rb$ is finitely generated. It follows that $R/(Ra\cap Rb)$ is IF. We may assume that $Ra\cap Rb=0$. By \cite[Corollary 2.5]{Ja73} $A=(0:(0:A))$ for each finitely generated ideal  $A$. We deduce that $0=Ra\cap Rb=(0:(0:a)+(0:b)).$ Then $(0:a)+(0:b)$ is a faithful finitely generated proper ideal. By \cite[Corollary 2.5]{Ja73} this is not possible. Hence $R$ is a valuation ring. \end{proof}

\bigskip
If $R$ is fractionally self FP-injective, then, by \cite[Theorem III.1]{Cou03} $\mathrm{Min}\ R/A$ is compact for each proper ideal $A$. The following example shows that this is not true if $R$ is finitely fractionally IF, even if $R$ is a coherent clean arithmetical ring.

\begin{example} Let $D$ be a valuation domain. Assume that its maximal ideal $P'$ is the only non-zero prime and it is not finitely generated. Let $0\ne d\in P'$. We put $V=D/dD$, $P=P'/dD$ and $R=V^{\mathbb{N}}$. It is easy to check that $R$ is clean, B\'ezout and coherent. So, by (ii) of Theorem~\ref{T:ffIF} $R$ is finitely fractionally IF. Since $P$ is not finitely generated, $\forall n\in\mathbb{N}$, $\exists b_n\in P$ such that $b_n^n\ne 0$. We set $b=(b_n)_{n\in\mathbb{N}}$. Let $N$ be the nilradical of $R$. If there exists $c=(c_n)_{n\in\mathbb{N}}\in R$ such that $(b-bcb)\in N$, then $\exists m\in\mathbb{N}$ such that $b^m(1-cb)^m=0$. If $n\in\mathbb{N},\ n\geq m$ we get that $b_n^n(1-c_nb_n)^n=0$. Clearly there is a contradiction. So, $R/N$ is not Von Neumann regular. Now, let $c\in R$ such that $(N:c)=N$. We shall prove that $c$ is  a unit. By way of contradiction, suppose  $\exists k\in\mathbb{N}$  such that $c_k\in P$. We put $e_k=(\delta_{k,n})_{n\in\mathbb{N}}$. Then $ce_k=c_ke_k\in N$. It follows that $e_k\in N$, which is absurd. So, $\forall k\in\mathbb{N},\ c_k\notin P$. Therefore $c$ is a unit and $R/N$ is equal to its quotient ring. By \cite[Theorem 5]{End61} a reduced arithmetic ring $S$ is semihereditary if and only if $Q(S)$ is von Neumann regular. By \cite[Proposition 10]{Que71} a reduced arithmetic ring $S$ is semihereditary if and only if $\mathrm{Min}\ S$ is compact. Consequently $\mathrm{Min}\ R/N$ is not compact . Hence $\mathrm{Min}\ R$ is not compact too. (If $P'$ is finitely generated by $p$ and  $R=\prod_{n\in\mathbb{N}}D/p^{n+1}D$, then $R$ is clean, arithmetical, coherent and finitely fractionally IF, but $\mathrm{Min}\ R$ is not compact. We do the same proof by taking  $b_n=p+p^{n+1}D,\ \forall n\in\mathbb{N}$.) 
\end{example}


\end{document}